\newcommand{\ZZ}{{\mathbb Z}}
\newcommand{\ZZo}{{\mathbb Z}\backslash\{0\}}
\newcommand{\NN}{{\mathbb N}}
\newcommand{\CC}{{\mathbb C}}
\newcommand{\calB}{{\mathcal B}}
\DeclareMathOperator{\Hom}{Hom}
\DeclareMathOperator{\Ext}{Ext}
\DeclareMathOperator{\res}{res}
\DeclareMathOperator{\add}{add}
\DeclareMathOperator{\remo}{rem}
\DeclareMathOperator{\supp}{supp}
\DeclareMathOperator{\Mod}{-mod}
\newcommand{\wif}{\mbox{\rm if }}
\newcommand{\wand}{\mbox{\rm and }}
\DeclareMathOperator{\ind}{ind}
\DeclareMathOperator{\pr}{pr}
\newcommand{\too}{\longrightarrow}
\begin{document}
\theoremstyle{plain}
\newtheorem{thm}{Theorem}[section]
\newtheorem{prop}[thm]{Proposition}
\newtheorem{lem}[thm]{Lemma}
\newtheorem{cor}[thm]{Corollary}
\newtheorem{conj}[thm]{Conjecture}
\newtheorem{qn}[thm]{Question}
\newtheorem{claim}[thm]{Claim}
\newtheorem{defn}[thm]{Definition}
\theoremstyle{definition}
\newtheorem{rem}[thm]{Remark}
\newtheorem{ass}[thm]{Assumption}
\newtheorem{example}[thm]{Example}

\setlength{\parskip}{1ex}

\title[Diagrammatic Kazhdan-Lusztig theory for the (walled) Brauer
algebra]{Diagrammatic Kazhdan-Lusztig theory for the (walled) Brauer algebra} 
\author{Anton Cox} 
\email{A.G.Cox@city.ac.uk}
\author{Maud De Visscher} 
\email{M.Devisscher@city.ac.uk}
\address{Centre for Mathematical Science\\ City
University\\ Northampton Square\\ London\\ EC1V 0HB\\ England.}
\subjclass[2000]{Primary 20G05}
 \begin{abstract}
We determine the decomposition numbers for the Brauer and walled
Brauer algebra in characteristic zero in terms of certain polynomials
associated to cap and curl diagrams (recovering a result of Martin in
the Brauer case). We consider a second family of polynomials
associated to such diagrams, and use these to determine projective
resolutions of the standard modules. We then relate these two families
of polynomials to Kazhdan-Lusztig theory via the work of
Lascoux-Sch\"utzenberger and Boe, inspired by work of Brundan and
Stroppel in the cap diagram case.
 \end{abstract}

\maketitle

\section{Introduction}

Classical Schur-Weyl duality relates the representations of the
symmetric and general linear groups via their actions on tensor
space. The Brauer algebra was introduced in \cite{brauer} to play the
role of the symmetric group in a corresponding duality for the
symplectic and orthogonal groups. Over the complex numbers it is
generically semisimple \cite{brownbrauer}, indeed it can only be
non-semisimple if $\delta\in\ZZ$ \cite{wenzlbrauer}.

Building on work of Doran, Hanlon, and Wales \cite{dhw} we determined,
with Martin, the blocks of the Brauer algebra over $\CC$
\cite{cdm}. This block structure could be defined in terms of the
action of a Weyl group of type $D$ \cite{cdm2}, with a maximal
parabolic subgroup of type $A$ determining the dominant
weights.  The corresponding alcove geometry has associated translation
functors which can be used to provide Morita equivalences between
weights in the same facet \cite{cdm3}. More recently, Martin
\cite{marbrauer} has shown that the decomposition numbers for the
standard modules are given by the corresponding parabolic
Kazhdan-Lusztig polynomials.

The walled Brauer algebra was introduced in another generalisation of
Schur-Weyl duality, by changing the tensor space on which the
symmetric group acts. If instead a mixed tensor space (made of copies
of the natural module and its dual) is considered, then the walled
Brauer algebra plays the role of the symmetric group in the
duality. This was introduced independently by a number of authors
\cite{turwall, koikewall, bchlls}. In \cite{cddm} the walled Brauer
algebra was analysed in the same spirit as in \cite{cdm, cdm2}, and
the blocks were again described in terms of the action of a Weyl group
--- but this time of type $A$, with a maximal parabolic subgroup of
type $A\times A$ determining the dominant weights.

The Kazhdan-Lusztig polynomials associated to 
$(D_n,A_{n-1})$ and $(A_n,A_{r-1}\times A_{n-r})$ are two of the infinite
families associated with Hermitian symmetric spaces, and have already
been considered by a number of authors.  Lascoux and Sch\"utzenberger
\cite{lsgrass} considered the $(A_n,A_{r-1}\times A_{n-r})$ case
and gave an explicit formula for the coefficients in terms of certain
special valued graphs. This was extended to the other Hermitian
symmetric pairs by Boe \cite{boe}. A different combinatorial
description was given by Enright and Shelton \cite{es} in terms of an
associated root system.  (A more general situation has also been
considered by Brenti \cite{brenti} who describes the corresponding
polynomials in terms of shifted-Dyck partitions.)

The Brauer and walled Brauer algebras are examples of diagram
algebras. A quite different diagram algebra was introduced by Khovanov
\cite{khov1, khov2} in his work on categorifying the Jones
polynomial. Brundan and Stroppel have studied generalisations
of these algebras, relating them to a parabolic category
${\mathcal O}$ and the general linear supergroup \cite{bs1, bs2, bs3,
  bs4}. Along the way, Kazhdan-Lusztig polynomials of type
$(A_n,A_{r-1}\times A_{n-r})$ arise, and Brundan and Stroppel
re-express the combinatorial formalism of Lascoux and Sch\"utzenberger
in terms of certain cap diagrams.

In this paper we will determine the decomposition numbers for the
Brauer and walled Brauer algebras by analysing the blocks of these
algebras in the (combinatorial) spirit of Brundan and Stroppel. For
the Brauer algebra we introduce certain curl diagrams which
correspond to the graph formalism in Boe, while the walled Brauer
algebra involves only cap diagrams. The decomposition numbers for the
Brauer algebra were determined by Martin \cite{marbrauer}; our methods
give a uniform proof that includes the walled Brauer case.

One of the main organisational tools in our earlier work was the
notion of a tower of recollement \cite{cmpx}. We give a slight
extension of our earlier theory of translation functors for such
towers \cite{cdm3} and use this to reduce the decomposition number
problem to a combinatorial exercise. This is then solved using 
curl diagrams, thus giving a unified proof for the Brauer and
walled Brauer cases. 

In the Brauer case the combinatorial construction is related to that
given in \cite{marbrauer}. However, using cap and curl diagrams we are
able to explicitly calculate certain inverses to the decomposition
matrices for both Brauer and walled Brauer. The polynomial entries of
these matrices can be used to describe projective resolutions of the
standard modules in each case. (Again, this is in the spirit of
Brundan and Stroppel.)

We begin in Section \ref{basics} with a brief review of the basics of
Brauer and walled Brauer representation theory. Section \ref{tor}
reviews (and slightly extends) the tower of recollement formalism, and
the theory of translation functors in this context. Sections
\ref{capsec} and \ref{twicapsec} introduce two of the main
combinatorial constructions: oriented cap and curl diagrams. These are
used in Section \ref{repthry} to determine the decomposition numbers for our
algebras.

After providing a recursive formula for decomposition numbers in
Section \ref{recur} we define a second family of polynomials using
valued cap and curl diagrams in Section \ref{vcap}. These are used to
determine projective resolutions of standard modules in Section
\ref{proj}. Finally, the relation between the polynomials associated
to valued cap and curl diagrams and the construction of parabolic
Kazhdan-Lusztig polynomials by Lascoux-Sch\"utzenberger and Boe is
outlined in the Appendix.

We would like to thank Paul Martin for several useful discussions.

\section{The Brauer and walled Brauer algebras}\label{basics}

We will review some basic results about the representations of the
Brauer and the walled Brauer algebra. The two theories are very
similar; we will concentrate on the walled Brauer (which is less
familiar) and sketch the modifications required for the classical
Brauer algebra.  Details can be found in
\cite{cddm} for the walled Brauer algebra, and in \cite{cdm}
otherwise. We will restrict attention to the case where the ground
field is $\CC$, and assume that our defining parameter $\delta$ is
non-zero.

Let $n=r+s$ for some non-negative integers $r$, $s$. For $\delta\in
\CC$, the Brauer algebra $B_{n}(\delta)$ (which we will often denote
just by $B_{n}$) can be defined in terms of a basis of diagrams. We
will consider certain rectangles with $n$ marked nodes on each of the
northern and southern edges.  Brauer diagrams are then those
rectangles in which all nodes are connected to precisely one other by
a line.  Lines connecting nodes on the same edge are called arcs,
while those connecting nodes on opposite edges are called propagating
lines.  Multiplication of diagrams $A$ and $B$ is by concatenation, to
form a diagram $C$ which may contain some number ($t$ say) of closed
loops. To form a diagram in our basis we set $C$ equal to $\delta^tC'$
where $C'$ is the diagram obtained from $C$ by deleting all closed
loops.

Now decorate all Brauer diagrams in $B_n$ with a vertical wall
separating the first $r$ nodes on each edge from the final $s$ nodes
on each edge.  The walled Brauer algebra $B_{r,s}(\delta)$ (or just
$B_{r,s}$) is then the subalgebra of $B_n$ generated by those Brauer
diagrams in which arcs cross the wall, while propagating
lines do not.

For $\delta\neq 0$ let $e_{r,s}$ be $\delta^{-1}$ times the diagram
with all nodes connected vertically in pairs except for those adjacent
to the wall, which are connected across the wall. This is an
idempotent, and we have an algebra isomorphism 
$$B_{r-1,s-1}\cong e_{r,s}B_{r,s}e_{r,s}.$$
Via this isomorphism we have an exact localisation functor 
$$F_{r,s}:B_{r,s}\Mod\too B_{r-1,s-1}\Mod$$ taking a module $M$ to
$e_{r,s}M$, and a right exact globalisation functor $G_{r-1,s-1}$ in the
opposite direction taking a module $N$ to
$B_{r,s}e_{r,s}\otimes_{{e_{r,s}}B_{r,s}e_{r,s}}N$. There is a similar
idempotent $e_n\in B_n$ and algebra isomorphism $B_{n-2}\cong
e_nB_ne_n$, giving rising to corresponding localisation and
globalisation functors $F_n$ and $G_n$.

Let $\Sigma_r$ denote the symmetric group on $r$ symbols, and set
$\Sigma_{r,s}=\Sigma_r\times \Sigma_s$. There is an isomorphism
$$B_{r,s}/B_{r,s}e_{r,s}B_{r,s}\cong \CC\Sigma_{r,s}$$ and this latter
algebra has simple modules labelled by $\Lambda^{r,s}$, the set of
pairs of partitions of $r$ and $s$ respectively. By standard
properties of localisation it follows that if $r,s>0$ then the set of
simple modules for $B_{r,s}$ is labelled by
$$\Lambda_{r,s}=\Lambda^{r,s}\cup\Lambda_{r-1,s-1}.$$ As $B_{r,0}\cong
B_{0,r}\cong \Sigma_r$ we deduce that $\Lambda_{r,s}$ consists of all
pairs $\lambda=(\lambda^L,\lambda^R)$ such that $\lambda^L$ is a partition of
$r-t$ and $\lambda^R$ is a partition of $s-t$ for some $t\geq 0$. We
say that such a bipartition is of degree $\deg(\lambda)=(r-t,s-t)$, and
put a partial order on degrees by setting $(a,b)\leq(c,d)$ if $a\leq c$ and
$b\leq d$.

Let $\Lambda^n$ denote the set of partitions of $n$. Then by similar
arguments we see that the labelling set $\Lambda_n$ for simple
$B_n$-modules is given recursively by
$\Lambda_n=\Lambda^n\cup\Lambda_{n-2}$
and so $\Lambda_n$ consists of all partitions $\lambda$ of $n-2t$ for
some $t\geq 0$. We say that such a partition is of degree
$\deg(\lambda)=n-2t$. 

The $e_{r-t,s-t}$ with $0\leq t\leq \min(r,s)$ induce a heredity chain
in $B_{r,s}$, and so we can apply the theory of quasihereditary
algebras. In particular for each $\lambda\in \Lambda_{r,s}$ there is
an associated standard module $\Delta_{r,s}(\lambda)$ with simple head
$L_{r,s}(\lambda)$ and projective cover $P_{r,s}(\lambda)$. The
standard modules have an explicit description in terms of walled
Brauer diagrams and Specht modules for the various $\Sigma_{r-t,s-t}$,
and determining the decomposition numbers for these modules in terms
of their simple factors is equivalent to determining the simple
modules themselves.  In the same way the $B_n$ are quasihereditary,
with standard modules $\Delta_n(\lambda)$, with simple modules
$L_n(\lambda)$, and projective covers $P_n(\lambda)$.

By general properties of our heredity chain we have
$$G_{r,s}\Delta_{r,s}(\lambda)\cong \Delta_{r+1,s+1}(\lambda)$$
and
$$F_{r,s}\Delta_{r,s}(\lambda)\cong 
\left\{\begin{array}{ll}
\Delta_{r-1,s-1}(\lambda)&\wif \lambda\in\Lambda_{r-1,s-1}\\
0 &\wif\lambda\in\Lambda^{r,s}\end{array}\right.$$
We define a partial order on the set of all partitions (or all
bipartitions) by setting $\lambda\leq \mu$ if $\deg(\lambda)\leq
\deg(\mu)$. This is the opposite of the partial order induced by the
quasihereditary structure on $\Lambda_n$ or $\Lambda_{r,s}$.
Thus the decomposition multiplicity
$$[\Delta_{r,s}(\lambda):L_{r,s}(\mu)]$$ is zero unless $\lambda\leq
\mu$, and is independent of $(r,s)$ provided that
$\lambda,\mu\in\Lambda_{r,s}$ (and similarly for the Brauer case).

As our algebra is quasihereditary each projective module
$P_{r,s}(\lambda)$ has a filtration by standard modules. The
multiplicity of a given standard $\Delta_{r,s}(\mu)$ in such a
filtration is well-defined, and we denote it by
$$D_{\lambda\mu}=(P_{r,s}(\lambda):\Delta_{r,s}(\mu)).$$ By
Brauer-Humphreys reciprocity we have
$$D_{\lambda\mu}=[\Delta_{r,s}(\mu):L_{r,s}~(\lambda)]$$
(and hence $D_{\lambda\mu}$ is independent of $r$ and $s$). Again,
analogous results hold for the Brauer algebra, and we shall denote the
corresponding filtration multiplicities by $D_{\lambda\mu}$ also.

The algebra $B_{r,s}$ can be identified with a subalgebra of
$B_{r+1,s}$ (respectively of $B_{r,s+1}$) by inserting an extra
propagating line immediately to the left (respectively to the right)
of the wall. The corresponding restriction functors will be denoted
$\res^L_{r+1,s}$ and $\res^R_{r,s+1}$, with associated induction
functors $\ind^L_{r,s}$ and $\ind^R_{r,s}$. Similarly, $B_{n}$ is a
subalgebra of $B_{n+1}$ giving associated functors $\ind_n$ and $\res_{n+1}$

We will identify a partition with its associated Young diagram, and
let $\add(\lambda)$ (respectively $\remo(\lambda)$) denote the set of
boxes which can be added singly to (respectively removed singly from)
$\lambda$ such that the result is still a partition. Given such a box
$\epsilon$, we denote the associated partition by $\lambda+\epsilon$
(respectively $\lambda-\epsilon$). If we wish to emphasise that
$\epsilon$ lies in a given row ($i$ say) then we may denote it by
$\epsilon_i$.

By \cite[Theorem
  3.3]{cddm} we have

\begin{prop}\label{indres}
Suppose that $\lambda=(\lambda^L,\lambda^R)\in\Lambda^{r-t,s-t}$. If $t=0$
then
$$\res^L_{r,s}\Delta_{r,s}(\lambda^L,\lambda^R)\cong 
\bigoplus_{\epsilon\in\remo(\lambda^L)}\Delta_{r-1,s}(\lambda^L-\epsilon,\lambda^R).$$
If $t>0$ then there is a short exact sequence
$$0\too
\bigoplus_{\epsilon\in\remo(\lambda^L)}
\Delta_{r-1,s}(\lambda^L-\epsilon,\lambda^R)\too
\res^L_{r,s}\Delta_{r,s}(\lambda)\too
\bigoplus_{\epsilon\in\add(\lambda^R)}
\Delta_{r-1,s}(\lambda^L,\lambda^R+\epsilon)\too 0.$$
There is a similar result for $\res^R_{r,s}$ replacing $\remo(\lambda^L)$ by
$\remo(\lambda^R)$ and $\add(\lambda^R)$ by $\add(\lambda^L)$.
There is also a short exact sequence
$$0\too
\bigoplus_{\epsilon\in\remo(\lambda^L)}
\Delta_{r,s+1}(\lambda^L-\epsilon,\lambda^R)\too
\ind^R_{r,s}\Delta_{r,s}(\lambda)\too
\bigoplus_{\epsilon\in\add(\lambda^R)}
\Delta_{r,s+1}(\lambda^L,\lambda^R+\epsilon)\too 0$$
where the first sum equals $0$ if $\lambda^L=\emptyset$. Again there
is a similar result for $\ind^L_{r,s}$.
\end{prop}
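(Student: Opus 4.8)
The plan is to reduce everything to the known branching rules for standard modules of the (walled) Brauer algebra on one side and Specht modules for the symmetric groups on the other, using the explicit description of $\Delta_{r,s}(\lambda)$ in terms of walled Brauer diagrams and Specht modules together with the reference \cite[Theorem 3.3]{cddm}. Concretely, recall that $\Delta_{r,s}(\lambda)$ for $\lambda \in \Lambda^{r-t,s-t}$ is built from the span of "half-diagrams" having exactly $t$ arcs on each side of the wall, tensored with the Specht module $S^{\lambda^L} \boxtimes S^{\lambda^R}$ for $\Sigma_{r-t,s-t}$. The restriction functor $\res^L_{r,s}$ forgets the propagating line immediately to the left of the wall; a half-diagram for $B_{r,s}$ restricted to $B_{r-1,s}$ either keeps that strand as a propagating line (contributing the bottom term of the sequence, where one box is removed from $\lambda^L$ via the restriction rule $\res^{\Sigma_r}_{\Sigma_{r-1}} S^{\lambda^L} \cong \bigoplus_{\epsilon} S^{\lambda^L - \epsilon}$), or the relevant node becomes the endpoint of a new arc (contributing the top term, where the number of arcs drops by one on the left, so $t \to t-1$, and by a standard computation with the $e_{r,s}$ idempotents this forces a box to be \emph{added} to $\lambda^R$). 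The case $t=0$ is then the degenerate version where no arcs are present, so only the "keep the strand" possibility survives and the sequence collapses to the direct sum displayed.

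First I would set up the filtration of $\res^L_{r,s}\Delta_{r,s}(\lambda)$ by the number of arcs crossing or meeting the distinguished node, giving a two-step filtration with the stated sub- and quotient-modules; this is the heart of the diagram combinatorics and is exactly what \cite[Theorem 3.3]{cddm} provides, so I would cite it rather than reprove it. The $\res^R$ statement is obtained by the obvious left-right symmetry of the wall (swapping the roles of $\lambda^L$ and $\lambda^R$, and of $r$ and $s$), which exchanges $\remo(\lambda^L)$ with $\remo(\lambda^R)$ and $\add(\lambda^R)$ with $\add(\lambda^L)$ as claimed.

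Next I would deduce the $\ind^R_{r,s}$ statement from the $\res$ statement. The cleanest route is via the isomorphism $B_{r,s} \cong e_{r,s+1} B_{r,s+1} e_{r,s+1}$ composed with the localisation/globalisation formalism: one checks that $\ind^R_{r,s}$ and a suitable restriction functor are related through the exact localisation functor $F$ and the globalisation $G$, so that applying $F_{r,s+1}$ to $\ind^R_{r,s}\Delta_{r,s}(\lambda)$ and using $F_{r,s+1}\Delta_{r,s+1}(\mu) \cong \Delta_{r,s}(\mu)$ or $0$ (from the branching identities for $F$ and $G$ recalled in Section \ref{basics}) pins down the standard filtration multiplicities. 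Equivalently, one can use that $\ind^R_{r,s}$ is left adjoint to $\res^R_{r,s+1}$ and that both functors are exact on standardly-filtered modules, so the short exact sequence for $\res^R_{r,s+1}\Delta_{r,s+1}(\mu)$ (reading off which $\Delta_{r,s+1}(\mu)$ can have $\Delta_{r,s}(\lambda)$ in a $\res^R$-filtration) transposes into the asserted sequence for $\ind^R_{r,s}\Delta_{r,s}(\lambda)$; the vanishing of the first summand when $\lambda^L = \emptyset$ reflects that $\remo(\emptyset)$ is empty. The symmetric statement for $\ind^L_{r,s}$ follows again by the wall symmetry.

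The step I expect to be the main obstacle is the bookkeeping in the induction computation: tracking precisely how the degree $t$ changes and which of $\lambda^L$, $\lambda^R$ gains or loses a box when one passes a strand through the idempotent $e_{r,s}$, and verifying exactness (rather than merely a composition-factor equality) of the resulting sequence. This requires combining the exactness of $F$, the right-exactness of $G$, and the known standard filtrations carefully; the partition combinatorics itself (symmetric group branching) is routine once the diagrammatic picture is fixed.
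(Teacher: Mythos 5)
The paper gives no proof of this proposition at all: it is quoted verbatim from \cite[Theorem 3.3]{cddm}, which is exactly the citation your proposal leans on for the heart of the argument. Your sketch of the underlying diagram-filtration (propagating strand versus arc through the distinguished node, with the corresponding box removal from $\lambda^L$ or addition to $\lambda^R$) is a correct account of how that cited result is proved, so your approach is essentially the same as the paper's.
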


There is an entirely analogous result for the Brauer algebra, where
the terms in the submodule of the restriction (or induction) of
$\Delta_n(\lambda)$ are labelled by all partitions obtained by
removing a box from $\lambda$, and those in the quotient module by all
partitions obtained by adding a box to $\lambda$. For example, we have
a short exact sequence
$$0\too
\bigoplus_{\epsilon\in\remo(\lambda)}
\Delta_{n+1}(\lambda-\epsilon)\too
\ind_n\Delta_n(\lambda)\too
\bigoplus_{\epsilon\in\add(\lambda)}
\Delta_{n+1}(\lambda+\epsilon)\too 0$$
where the first sum equals $0$ if $\lambda=\emptyset$.

It will be convenient to consider the Brauer and walled Brauer cases
simultaneously. In the walled Brauer case we will set $(a)=(r,s)$,
with $(a-1)=(r,s-1)$ and $(a+1)=(r+1,s)$. In the Brauer case we will
set $(a)=n$ with $(a-1)=n-1$ and $(a+1)=n+1$. Then $\Lambda_{(a)}$
will denote either $\Lambda_{r,s}$ or $\Lambda_n$ depending on the
algebra being considered, and similarly for $\Delta_{(a)}(\lambda)$
and any other objects or functors with subscripts.

\section{Translation functors}\label{tor}

In \cite{cdm3} we introduced the notion of translation functors for a
tower of recollement, and showed how they could be used to generate
Morita equivalence between different blocks. After a brief review of
this, we will show how this can be applied to the Brauer and walled
Brauer algebras. Details can be found in \cite[Section 4]{cdm3}.

Let $A_n$ with $n\in\NN$ form a tower of recollement, with associated
idempotents $e_n$ for $n\geq 2$. Let $\Lambda_n$ denote the set of
labels for the simple $A_n$ modules, which we call weights.  We denote
the associated simple, standard, and projective modules by
$L_n(\lambda)$, $\Delta_n(\lambda)$ and $P_n(\lambda)$
respectively. The algebra embedding arising from our tower structure
give rise to induction and restriction functors $\ind_n$ and
$\res_n$. For each standard module $\Delta_n(\lambda)$, the module
$\res_n\Delta_n(\lambda)$ has a filtration by standard modules with
well-defined multiplicities; we denote by $\supp_n(\lambda)$ the
multiset of labels for standard modules occurring in such a
filtration. We impose a crude order on weights by setting
$\lambda<\mu$ if there exists $n$ such that $\lambda\in\Lambda_n$ but
$\mu\notin\Lambda_n$. This is the opposite of the order induced by the
quasihereditary structure.

In such a tower we have isomorphisms $e_nA_ne_n\cong A_{n-2}$. Thus we
also have associated localisation functors $F_n$ and globalisation
functors $G_n$. Globalisation induces an embedding of $\Lambda_n$ inside
$\Lambda_{n+2}$, and an associated embedding of $\supp_n(\lambda)$
inside $\supp_{n+2}(\lambda)$, which becomes an identification if
$\lambda\in\Lambda_{n-2}$. We denote by $\supp(\lambda)$ the set
$\supp_n(\lambda)$ where $n>>0$.

Let ${\mathcal B}_n(\lambda)$ denote the set of weights labelling
simple modules in the same block for $A_n$ as $L_n(\lambda)$. Again
there is an induced embedding of ${\mathcal B}_n(\lambda)$ inside
${\mathcal B}_{n+2}(\lambda)$, and we denote by ${\mathcal
  B}(\lambda)$ the corresponding limit set. Given a weight $\lambda$,
we denote by $\pr_n^{\lambda}$ the functor which projects onto the
$A_n$-block containing $L_n(\lambda)$. We then define \emph{translation
functors} $\res_n^{\lambda}=\pr_{n-1}^{\lambda}\res_n$ and
$\ind_n^{\lambda}=\pr_{n+1}^{\lambda}\ind_n$.

We say that two weights $\lambda$ and $\lambda'$ are \emph{translation
equivalent} if
(i) we have
$${\mathcal B}(\lambda')\cap\supp(\lambda)=\{\lambda'\}
\quad\quad\wand\quad\quad
{\mathcal B}(\lambda)\cap\supp(\lambda')=\{\lambda\}$$
and (ii) for all $\mu\in{\mathcal B}(\lambda)$ there is a unique element
$\mu'\in{\mathcal B}(\lambda')\cap\supp(\mu)$ and 
$${\mathcal B}(\lambda)\cap\supp(\mu')=\{\mu\}.$$ 
When $\lambda$ and $\lambda'$ are translation equivalent we denote by 
$\theta:{\mathcal B}(\lambda)\too {\mathcal B}(\lambda')$ the
bijection taking $\mu$ to $\mu'$.

By \cite[Propositions 4.1 and 4.2]{cdm3} we have

\begin{thm}\label{equiv}
Suppose that $\lambda\in\Lambda_n$ and $\lambda'\in\Lambda_{n-1}$ are
translation equivalent, and that $\mu\in{\mathcal B}_n(\lambda)$ is
such that $\mu'\in{\mathcal B}_{n-1}(\lambda')$. \\
(i) We have
$$\res_n^{\lambda'}L_n(\mu)\cong L_{n-1}(\mu')\quad\quad
\ind_{n-1}^{\lambda}L_{n-1}(\mu')\cong L_{n}(\mu)$$
and
$$\ind_{n-1}^{\lambda}P_{n-1}(\mu')\cong P_n(\mu).$$
(ii) If $\tau\in{\mathcal B}_n(\lambda)$ is such that $\tau'\in{\mathcal
  B}_{n-1}(\lambda')$ then
$$[\Delta_n(\mu): L_n(\tau)]=[\Delta_{n-1}(\mu'):L_{n-1}(\tau')]$$
and
$$\Hom(\Delta_n(\mu),\Delta_n(\tau))\cong \Hom(\Delta_{n-1}(\mu'),
\Delta_{n-1}(\tau')).$$
(iii) If  $\mu\in {\mathcal B}_{n-2}(\lambda)$ then 
$$\res_{n}^{\lambda'}P_n(\mu)\cong P_{n-1}(\mu').$$
\end{thm}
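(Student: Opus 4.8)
The plan is to realise $\res_n^{\lambda'}$ and $\ind_{n-1}^{\lambda}$ as an adjoint pair of exact functors between the relevant block categories, and then to read off all of (i)--(iii) by tracking what they do to standard, costandard, simple and projective modules. Since in a tower of recollement $\ind_{n-1}$ and $\res_n$ are biadjoint and the projection functors $\pr$ are exact (they merely split off a block summand), the composites $\ind_{n-1}^{\lambda}=\pr_n^{\lambda}\ind_{n-1}$ and $\res_n^{\lambda'}=\pr_{n-1}^{\lambda'}\res_n$ are adjoint (in fact biadjoint) and exact, and $\ind_{n-1}^{\lambda}$ carries projectives to projectives.

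The computational heart is the effect on $\Delta$- and $\nabla$-modules. The module $\res_n\Delta_n(\mu)$ carries a $\Delta$-filtration whose section labels form the multiset $\supp_n(\mu)$; applying $\pr_{n-1}^{\lambda'}$ keeps precisely the sections whose labels lie in $\mathcal{B}(\lambda')$, and since $\supp_n(\mu)\subseteq\supp(\mu)$ we have $\mathcal{B}_{n-1}(\lambda')\cap\supp_n(\mu)\subseteq\mathcal{B}(\lambda')\cap\supp(\mu)=\{\mu'\}$ by clause (ii) of translation equivalence. Hence $\res_n^{\lambda'}\Delta_n(\mu)\cong\Delta_{n-1}(\mu')$ when $\mu'\in\mathcal{B}_{n-1}(\lambda')$ (and $\res_n^{\lambda'}\Delta_n(\mu)=0$ otherwise). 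Using the contravariant duality on the algebra, which commutes with restriction and interchanges $\Delta$- and $\nabla$-modules, the same argument gives $\res_n^{\lambda'}\nabla_n(\mu)\cong\nabla_{n-1}(\mu')$; and the symmetric analysis of $\ind_{n-1}\Delta_{n-1}(\mu')$ and $\ind_{n-1}\nabla_{n-1}(\mu')$, together with the reciprocal condition $\mathcal{B}(\lambda)\cap\supp(\mu')=\{\mu\}$, gives $\ind_{n-1}^{\lambda}\Delta_{n-1}(\mu')\cong\Delta_n(\mu)$ and $\ind_{n-1}^{\lambda}\nabla_{n-1}(\mu')\cong\nabla_n(\mu)$.

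Parts (i) and (ii) now follow quickly. For the simple modules, apply the exact functor $\res_n^{\lambda'}$ to the chain $\Delta_n(\mu)\twoheadrightarrow L_n(\mu)\hookrightarrow\nabla_n(\mu)$: then $\res_n^{\lambda'}L_n(\mu)$ is a quotient of $\Delta_{n-1}(\mu')$ and a submodule of $\nabla_{n-1}(\mu')$, and it is nonzero because adjunction gives $\Hom(\Delta_{n-1}(\mu'),\res_n^{\lambda'}L_n(\mu))\cong\Hom(\ind_{n-1}^{\lambda}\Delta_{n-1}(\mu'),L_n(\mu))\cong\Hom(\Delta_n(\mu),L_n(\mu))\neq 0$. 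The resulting nonzero composite $\Delta_{n-1}(\mu')\twoheadrightarrow\res_n^{\lambda'}L_n(\mu)\hookrightarrow\nabla_{n-1}(\mu')$ is a scalar multiple of the canonical map $\Delta_{n-1}(\mu')\to\nabla_{n-1}(\mu')$ (as $\Hom(\Delta_{n-1}(\mu'),\nabla_{n-1}(\mu'))\cong\CC$), whose image is $L_{n-1}(\mu')$; hence $\res_n^{\lambda'}L_n(\mu)\cong L_{n-1}(\mu')$, and symmetrically $\ind_{n-1}^{\lambda}L_{n-1}(\mu')\cong L_n(\mu)$. Since $\res_n^{\lambda'}L_n(\tau)\cong L_{n-1}(\tau')$ when $\tau'\in\mathcal{B}_{n-1}(\lambda')$ and $=0$ otherwise, the module $\ind_{n-1}^{\lambda}P_{n-1}(\mu')$ is projective with $\Hom(\ind_{n-1}^{\lambda}P_{n-1}(\mu'),L_n(\tau))\cong\Hom(P_{n-1}(\mu'),\res_n^{\lambda'}L_n(\tau))$ equal to $\CC$ for $\tau=\mu$ and $0$ for $\tau\neq\mu$, so $\ind_{n-1}^{\lambda}P_{n-1}(\mu')\cong P_n(\mu)$. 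For (ii): $\Hom(\Delta_n(\mu),\Delta_n(\tau))\cong\Hom(\ind_{n-1}^{\lambda}\Delta_{n-1}(\mu'),\Delta_n(\tau))\cong\Hom(\Delta_{n-1}(\mu'),\res_n^{\lambda'}\Delta_n(\tau))\cong\Hom(\Delta_{n-1}(\mu'),\Delta_{n-1}(\tau'))$; and applying the exact $\res_n^{\lambda'}$ to a composition series of $\Delta_n(\mu)$ and comparing coefficients of the simple classes in $[\Delta_{n-1}(\mu')]$ (the index sets matching under the bijection $\theta$, after enlarging $n$ if necessary, which does not affect decomposition numbers) yields $[\Delta_n(\mu):L_n(\tau)]=[\Delta_{n-1}(\mu'):L_{n-1}(\tau')]$.

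Part (iii) is the point needing extra care, and is where the hypothesis $\mu\in\mathcal{B}_{n-2}(\lambda)$ enters. Using (i) I would rewrite $\res_n^{\lambda'}P_n(\mu)\cong\res_n^{\lambda'}\ind_{n-1}^{\lambda}P_{n-1}(\mu')$ and appeal to a Mackey-type filtration of $\res_n\ind_{n-1}$ available for the tower \cite{cmpx,cdm3}, whose non-identity part is supported on weights lying outside $\mathcal{B}_{n-1}(\lambda')$ once $\mu'$ is stable (which the hypothesis $\mu\in\mathcal{B}_{n-2}(\lambda)$ guarantees), so that $\pr_{n-1}^{\lambda'}$ leaves $P_{n-1}(\mu')$. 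The main obstacles are thus the two places where stability of the weights must be controlled: verifying that the finite-level supports $\supp_n(\mu)$, $\supp_{n-1}(\mu')$ agree with their limits on the part meeting $\mathcal{B}(\lambda')$, so that the identifications of the second paragraph are valid at level $n$ and not merely in the limit; and, in (iii), isolating the correct Mackey filtration and showing that its remainder dies after projection. Everything else is formal manipulation with adjunctions and the quasihereditary structure.
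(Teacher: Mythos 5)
First, note that the paper does not prove this theorem itself: it is quoted verbatim from \cite[Propositions 4.1 and 4.2]{cdm3}, so the comparison below is with the argument that source (and the rest of this paper) actually relies on. Your treatment of (i) and (ii) is correct and is essentially the standard translation-principle argument: the block projections make $\ind_{n-1}^{\lambda}$ and $\res_n^{\lambda'}$ an adjoint pair of exact functors, condition (ii) of translation equivalence forces $\res_n^{\lambda'}\Delta_n(\mu)\cong\Delta_{n-1}(\mu')$ (and dually for $\nabla$), and sandwiching $L_n(\mu)$ between $\Delta_n(\mu)$ and $\nabla_n(\mu)$ together with the adjunction nonvanishing gives the statements about simples, projective covers, decomposition numbers and Hom-spaces. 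One point you flag but leave open --- that $\mathcal{B}_{n-1}(\lambda')\cap\supp_n(\mu)$ already equals $\{\mu'\}$ at the finite level --- does need a word: it follows because $\mu'\in\Lambda_{n-1}$ forces $\deg(\mu)$ to be small enough that the relevant ``add a box'' terms are present in $\res_n\Delta_n(\mu)$ (compare the $t=0$ versus $t>0$ cases of Proposition \ref{indres}).

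Part (iii), however, is not proved. You reduce to analysing $\res_n^{\lambda'}\ind_{n-1}^{\lambda}P_{n-1}(\mu')$ and invoke a ``Mackey-type filtration of $\res_n\ind_{n-1}$'' whose error term you would then have to kill; no such filtration is part of the tower-of-recollement formalism, and you yourself list controlling its remainder as an unresolved obstacle, so this is a strategy sketch rather than an argument. The hypothesis $\mu\in\mathcal{B}_{n-2}(\lambda)$ is there to enable a different and much shorter route, the one used in this paper's own proof of Proposition \ref{lc}: since $\mu\in\Lambda_{n-2}$ one has $G_{n-2}P_{n-2}(\mu)\cong P_n(\mu)$ (\cite[Lemma 4.3]{cdm3}), and the tower axiom $\res_n G_{n-2}\cong\ind_{n-2}$ gives $\res_n^{\lambda'}P_n(\mu)\cong\ind_{n-2}^{\lambda'}P_{n-2}(\mu)$, which is $P_{n-1}(\mu')$ by part (i) applied one level down (translation equivalence being symmetric in $\lambda$ and $\lambda'$). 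Replacing your Mackey step by this globalisation step completes the proof.
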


The above result suggests that translation equivalent weights should
be in Morita equivalent blocks, but this is not true in general as
there will not be a bijection between the simple modules. However, by
a suitable truncation of the algebra we do get Morita equivalences. 

The algebra $A_n$ decomposes as 
$$A_n=\bigoplus_{\lambda\in\Lambda_n}P_n(\lambda)^{m_{n,\lambda}}$$
for some integers $m_{n,\lambda}$. Let
$1=\sum_{\lambda\in\Lambda_n}e_{n,\lambda}$ be the associated
orthogonal idempotent decomposition of the identity in $A_n$. There is
also a decomposition of $A_n$ into its block subalgebras
$$A_n=\bigoplus_{\lambda}A_n(\lambda)$$
where the sum runs over a set of block representatives. Now let
$\Gamma\subseteq {\mathcal B}_n(\lambda)$ and consider the idempotent
$e_{n,\Gamma}=\sum_{\gamma\in\Gamma}e_{n,\gamma}$. We define the
algebra $A_{n,\Gamma}(\lambda)$ by
$$A_{n,\Gamma}(\lambda)=e_{n,\Gamma}A(_n(\lambda)e_{n,\Gamma}.$$
By \cite[Theorem 4.5 and Corollary 4.7]{cdm3} we have

\begin{thm}\label{walls}
Suppose that $\lambda$ and $\lambda'$ are translation equivalent, with
$\lambda\in \Lambda_n$, and set 
$$\Gamma=\theta({\mathcal B}_n(\lambda))\subseteq{\mathcal
  B}_{n+1}(\lambda').$$
(i) The algebras $A_n(\lambda)$ and $A_{n+1,\Gamma}(\lambda')$ are Morita
equivalent. In particular, if $|{\mathcal B}_n(\lambda)|=
|{\mathcal B}_{n+1}(\lambda')|$ then $A_n(\lambda)$ and
$A_{n+1}(\lambda')$ are Morita equivalent.\\
(ii) For all $\mu\in{\mathcal B}_n(\lambda)$ we have
$$\Ext^i(\Delta_n(\lambda),\Delta_n(\mu))\cong
\Ext^i(\Delta_{n+1}(\lambda'),\Delta_{n+1}(\mu')).$$
\end{thm}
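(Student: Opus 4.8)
The plan is to deduce Theorem~\ref{walls} from Theorem~\ref{equiv} by a standard idempotent-truncation argument, exploiting the fact that the truncated algebra $A_{n+1,\Gamma}(\lambda')$ is built precisely from the projective covers corresponding to the image $\Gamma=\theta(\mathcal{B}_n(\lambda))$. First I would set up notation: the block algebra $A_n(\lambda)$ is Morita equivalent to its basic algebra, which is $\End_{A_n}(\bigoplus_{\mu\in\mathcal{B}_n(\lambda)}P_n(\mu))^{\mathrm{op}}$, and likewise $A_{n+1,\Gamma}(\lambda')=e_{n+1,\Gamma}A_{n+1}(\lambda')e_{n+1,\Gamma}$ is Morita equivalent to $\End_{A_{n+1}}(\bigoplus_{\gamma\in\Gamma}P_{n+1}(\gamma))^{\mathrm{op}}$, since $\Gamma$ picks out exactly one representative of each isomorphism class of indecomposable projective occurring in that idempotent. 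So for (i) it suffices to produce an algebra isomorphism between these two endomorphism rings.

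The key step is to use Theorem~\ref{equiv}(i): since $\lambda,\lambda'$ are translation equivalent with $\lambda'\in\Lambda_{n}$ playing the role of the ``$n-1$'' weight relative to $\lambda\in\Lambda_{n+1}$ --- here I apply the theorem with $n+1$ in place of $n$ --- we have $\ind_{n}^{\lambda}P_{n}(\mu')\cong P_{n+1}(\mu)$ for each $\mu\in\mathcal{B}_n(\lambda)$ (reindexing the bijection $\theta$ appropriately, and noting $\theta(\mu)=\mu'$ under the identification). Wait: more carefully, the roles are that $\ind$ goes from level $n$ up to $n+1$, and the statement $\ind_{n-1}^{\lambda}P_{n-1}(\mu')\cong P_n(\mu)$ with the shift $n\mapsto n+1$ gives $\ind_{n}^{\lambda}P_{n}(\mu')\cong P_{n+1}(\mu')$ where now the primed weight lives upstairs --- I would carefully match the decorations so that the projectives $P_{n+1}(\gamma)$ for $\gamma\in\Gamma$ are exactly the images under $\ind^{\lambda}$ of the projectives $P_n(\mu)$ for $\mu\in\mathcal{B}_n(\lambda)$. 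Then since $\ind^{\lambda}$ is (up to the projection $\pr^{\lambda}$) just $\ind$, which is left adjoint to $\res$, and $\res^{\lambda}$ restricted to the relevant blocks is an inverse on simples by part (i), I would argue that $\ind^{\lambda}$ induces an isomorphism on the relevant $\Hom$-spaces: $\Hom_{A_n}(P_n(\mu),P_n(\nu))\cong\Hom_{A_{n+1}}(\ind^{\lambda}P_n(\mu),\ind^{\lambda}P_n(\nu))$. This is the heart of the matter and is essentially the content cited from \cite[Theorem 4.5]{cdm3}; the cleanest route is adjunction combined with $\res^{\lambda}\ind^{\lambda}P_n(\mu)\cong P_n(\mu)$, which follows from Theorem~\ref{equiv}(iii) (again with the index shift) once one checks $\mu\in\mathcal{B}_{n-1}(\lambda)$ in the shifted indexing. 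Assembling these isomorphisms into a functor shows $\ind^{\lambda}$ gives an equivalence $A_n(\lambda)\Mod\simeq A_{n+1,\Gamma}(\lambda')\Mod$, proving (i). The final sentence of (i) is then immediate: if $|\mathcal{B}_n(\lambda)|=|\mathcal{B}_{n+1}(\lambda')|$ then $\Gamma=\mathcal{B}_{n+1}(\lambda')$ and $e_{n+1,\Gamma}$ is (conjugate to) the block idempotent, so $A_{n+1,\Gamma}(\lambda')=A_{n+1}(\lambda')$.

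For part (ii), the plan is that the Morita equivalence of (i) is implemented by the exact functor $\ind^{\lambda}$ (induction is exact here since the relevant algebra embedding is via an idempotent, making $A_{n+1}$ free hence flat as an $A_n$-module), and that it sends standard modules to standard modules: using Theorem~\ref{equiv}(ii) together with the characterisation of standard modules, or more directly the fact that $\ind^{\lambda}$ respects $\Delta$-filtrations and $\ind^{\lambda}\Delta_n(\mu)$ has the right composition factors, one checks $\ind^{\lambda}\Delta_n(\mu)\cong\Delta_{n+1}(\mu')$. Since a Morita equivalence preserves $\Ext$-groups, and the equivalence matches $\Delta_n(\mu)\leftrightarrow\Delta_{n+1}(\mu')$ for all $\mu\in\mathcal{B}_n(\lambda)$ (in particular $\Delta_n(\lambda)\leftrightarrow\Delta_{n+1}(\lambda')$), we get $\Ext^i(\Delta_n(\lambda),\Delta_n(\mu))\cong\Ext^i_{A_{n+1,\Gamma}(\lambda')}(\Delta_{n+1}(\lambda'),\Delta_{n+1}(\mu'))$; and since truncation at $e_{n+1,\Gamma}$ kills no composition factors of these particular modules (all their factors are indexed in $\Gamma$, as $\Gamma\supseteq\supp$ of the relevant weights within the block), the truncation is $\Ext$-exact on these modules and we may replace $A_{n+1,\Gamma}(\lambda')$ by $A_{n+1}(\lambda')$, giving the claim.

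The main obstacle I anticipate is purely bookkeeping: correctly transposing the index conventions of Theorem~\ref{equiv} (stated with the translation going from level $n$ down to $n-1$) to the present situation where we translate up from $\Lambda_n$ to $\Lambda_{n+1}$, and keeping straight which of $\mu,\mu'$ is the ``small'' weight under $\theta$ at each stage. There is also a genuine (but routine) point in showing that truncation by $e_{n+1,\Gamma}$ is exact enough to preserve the $\Ext$-groups in question --- this requires knowing that $e_{n+1,\Gamma}$ acts as the identity on every composition factor and every $\Delta$-subquotient appearing in a projective resolution of the modules $\Delta_{n+1}(\lambda')$ and $\Delta_{n+1}(\mu')$ within the block, which follows because $\Gamma$ was defined to be the $\theta$-image of the \emph{entire} set $\mathcal{B}_n(\lambda)$, hence is ``saturated'' for the purposes of these computations. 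Both points are handled in the cited results of \cite{cdm3}; here I would simply indicate how Theorem~\ref{equiv} feeds into the standard truncation lemma and refer to \cite[Theorem 4.5 and Corollary 4.7]{cdm3} for the verification.
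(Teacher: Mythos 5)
The paper does not prove Theorem \ref{walls}: it is quoted verbatim from \cite[Theorem 4.5 and Corollary 4.7]{cdm3}, so there is no internal argument to compare yours against, and your ultimate deferral to those citations is in effect what the paper itself does. Your reconstruction of the strategy (induction sends the projectives indexed by ${\mathcal B}_n(\lambda)$ bijectively to those indexed by $\Gamma$, and one compares endomorphism rings of the corresponding progenerators) is the right shape of argument.

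However, two specific steps in your sketch would not survive as written. First, you invoke Theorem \ref{equiv}(iii) to get $\res^{\lambda}_{n+1}\ind^{\lambda'}_nP_n(\nu)\cong P_n(\nu)$, but part (iii) carries the hypothesis $\mu\in{\mathcal B}_{n-2}(\lambda)$; after shifting indices this means the restriction statement is only available for weights already present two levels down, and it is exactly the weights of maximal degree in ${\mathcal B}_{n+1}(\lambda')$ --- the ones not reached by $\theta$ --- that escape it. Second, your justification for replacing $\Ext$ over $A_{n+1,\Gamma}(\lambda')$ by $\Ext$ over $A_{n+1}(\lambda')$ in part (ii) rests on the claim that every composition factor of $\Delta_{n+1}(\mu')$ for $\mu'\in\Gamma$ is labelled by $\Gamma$. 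This is false in general: the weights in ${\mathcal B}_{n+1}(\lambda')\setminus\Gamma$ have maximal degree, hence are \emph{minimal} in the quasihereditary order (the degree order is opposite to it), and such weights are precisely those that can label composition factors of the standard modules $\Delta_{n+1}(\mu')$. So the truncation genuinely changes the modules, and relating $\Ext^i_{eAe}(e\Delta,e\Delta)$ to $\Ext^i_A(\Delta,\Delta)$ requires a real argument (the Schur functor is not $\Ext$-faithful in general); this, together with the first point, is where the actual content of \cite[Theorem 4.5 and Corollary 4.7]{cdm3} lies. A minor further quibble: exactness of $\ind_n$ is a tower-of-recollement axiom about $A_{n+1}$ being projective over $A_n$, not a consequence of "the embedding being via an idempotent" (the idempotent relates $A_{n-1}$ to $e_{n+1}A_{n+1}e_{n+1}$, which is a different comparison).
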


We will say that blocks ${\mathcal B}(\lambda)$ and ${\mathcal
  B}(\lambda')$ satisfying the condition in Theorem \ref{walls} are
\emph{weakly Morita equivalent}.

The notion of translation equivalent weights is motivated by the
translation principle in Lie theory, where translation functors give
equivalences for weights inside the same facet. Another common
situation in Lie theory involves the relationship between weights in a
pair of alcoves separated by a wall. There is also an analogue of this
in our setting.

We say that $\lambda'$ \emph{separates} $\lambda^-$ and $\lambda^+$ if
$${\mathcal B}(\lambda')\cap\supp(\lambda^-)=\{\lambda'\}=
{\mathcal B}(\lambda')\cap\supp(\lambda^+)$$
and 
$${\mathcal
  B}(\lambda^-)\cap\supp(\lambda')=\{\lambda^+,\lambda^-\}.$$
Whenever we consider a pair of weights $\lambda^-$ and $\lambda^+$
separated by $\lambda'$ we shall always assume that
$\lambda^-<\lambda^+$. By
\cite[Theorem 4.8]{cdm3} we have

\begin{thm}\label{local}
(i) If $\lambda'\in\Lambda_{n-1}$ separates $\lambda^-$ and $\lambda^+$
  then
$$\res_n^{\lambda'}L_n(\lambda^+)\cong L_{n-1}(\lambda').$$
(ii) If further we have
  $\Hom(\Delta_n(\lambda^+),\Delta_n(\lambda^-))\neq 0$ then
$$\res_n^{\lambda'}L_n(\lambda^-)=0$$
and $\ind_{n-1}^{\lambda^-}\Delta_n(\lambda')$ is a nonsplit
extension of $\Delta_n(\lambda^-)$ by $\Delta_n(\lambda^+)$ and has
simple head $L_n(\lambda^+)$.
\end{thm}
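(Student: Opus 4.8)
The plan is to read off the effect of the translation functors on standard and costandard modules from the support data of Section~\ref{tor}, and then transfer this information to simple modules using exactness, the contravariant duality fixing simples, Brauer--Humphreys reciprocity, and the adjunction $\ind_{n-1}\dashv\res_n$ from the tower structure. I shall use freely that $\res_n\Delta_n(\mu)$ and $\ind_{n-1}\Delta_{n-1}(\nu)$ both lie in ${\mathcal F}(\Delta)$ with section multiplicities recorded by the support: $(\res_n\Delta_n(\mu):\Delta_{n-1}(\nu))$ and $(\ind_{n-1}\Delta_{n-1}(\nu):\Delta_n(\mu))$ both equal the multiplicity of $\nu$ in $\supp_n(\mu)$, the second via $\Hom(\ind_{n-1}\Delta_{n-1}(\nu),\nabla_n(\mu))\cong\Hom(\Delta_{n-1}(\nu),\res_n\nabla_n(\mu))$ and the $\nabla$-version of the support filtration; and that in the cases at hand (Proposition~\ref{indres} and its Brauer analogue) each application of $\res_n$ moves the degree of a weight by one box.

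For part (i): since ${\mathcal B}(\lambda')\cap\supp(\lambda^+)=\{\lambda'\}$ with multiplicity one, projecting the $\Delta$-filtration of $\res_n\Delta_n(\lambda^+)$ onto the block of $\lambda'$ leaves a single section, so $\res_n^{\lambda'}\Delta_n(\lambda^+)\cong\Delta_{n-1}(\lambda')$. As $\res_n^{\lambda'}$ is exact, $\res_n^{\lambda'}L_n(\lambda^+)$ is a quotient of $\Delta_{n-1}(\lambda')$, hence is either $0$ or has simple head $L_{n-1}(\lambda')$, and $[\res_n^{\lambda'}L_n(\lambda^+):L_{n-1}(\lambda')]\le 1$. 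It is also self-dual (the duality commutes with $\res_n$ and with block projection), so if nonzero it has simple socle $L_{n-1}(\lambda')$ too, and a nonzero module whose head and socle are isomorphic to a simple occurring in it just once must be that simple. It remains to see $\res_n^{\lambda'}L_n(\lambda^+)\neq 0$: in the Grothendieck-group identity $[\Delta_{n-1}(\lambda')]=\sum_\mu[\Delta_n(\lambda^+):L_n(\mu)]\,[\res_n^{\lambda'}L_n(\mu)]$ the coefficient of $[L_{n-1}(\lambda')]$ on the left is $1$, while for $\mu\neq\lambda^+$ with $[\Delta_n(\lambda^+):L_n(\mu)]\neq 0$ the weight $\mu$ has strictly larger degree than $\lambda^+$, so every composition factor of $\res_n^{\lambda'}L_n(\mu)$ (being a composition factor of some $\Delta_{n-1}(\nu)$ with $\nu\in\supp(\mu)$) has degree too large to be $L_{n-1}(\lambda')$; thus the coefficient is supplied by the $\mu=\lambda^+$ term, forcing $[\res_n^{\lambda'}L_n(\lambda^+):L_{n-1}(\lambda')]=1$. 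Hence $\res_n^{\lambda'}L_n(\lambda^+)\cong L_{n-1}(\lambda')$.

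For part (ii): the support computation for $\ind$ shows $\ind_{n-1}^{\lambda^-}\Delta_{n-1}(\lambda')\in{\mathcal F}(\Delta)$ has $\Delta$-sections $\Delta_n(\mu)$ for $\mu\in{\mathcal B}(\lambda^-)$ with $\lambda'\in\supp(\mu)$; since the support relation is symmetric for these algebras, this set is ${\mathcal B}(\lambda^-)\cap\supp(\lambda')=\{\lambda^+,\lambda^-\}$, each once. For the vanishing $\res_n^{\lambda'}L_n(\lambda^-)=0$: apply the exact $\res_n^{\lambda'}$ to $0\too\rad\Delta_n(\lambda^-)\too\Delta_n(\lambda^-)\too L_n(\lambda^-)\too 0$ and compare multiplicities of $L_{n-1}(\lambda')$, using $\res_n^{\lambda'}\Delta_n(\lambda^-)\cong\Delta_{n-1}(\lambda')$, part (i), the hypothesis $\Hom(\Delta_n(\lambda^+),\Delta_n(\lambda^-))\neq 0$ (so $[\Delta_n(\lambda^-):L_n(\lambda^+)]\geq 1$), and a degree count with the separation hypothesis ruling out any other composition factor of $\rad\Delta_n(\lambda^-)$ from contributing $L_{n-1}(\lambda')$; this gives $1\geq 1+[\res_n^{\lambda'}L_n(\lambda^-):L_{n-1}(\lambda')]$, so $[\res_n^{\lambda'}L_n(\lambda^-):L_{n-1}(\lambda')]=0$, and as $\res_n^{\lambda'}L_n(\lambda^-)$ is a quotient of $\Delta_{n-1}(\lambda')$ it vanishes. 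Next, $\ind_{n-1}^{\lambda^-}\Delta_{n-1}(\lambda')$ is not split, for a splitting $\Delta_n(\lambda^-)\oplus\Delta_n(\lambda^+)$ would give $\Hom(\ind_{n-1}^{\lambda^-}\Delta_{n-1}(\lambda'),L_n(\lambda^-))\neq 0$, whereas by adjunction this equals $\Hom(\Delta_{n-1}(\lambda'),\res_n^{\lambda'}L_n(\lambda^-))=0$. Since $\Ext^1(\Delta_n(\lambda^-),\Delta_n(\lambda^+))=0$ (as $\lambda^-<\lambda^+$), the only non-split two-step $\Delta$-filtration is $0\too\Delta_n(\lambda^-)\too\ind_{n-1}^{\lambda^-}\Delta_{n-1}(\lambda')\too\Delta_n(\lambda^+)\too 0$; in particular it maps onto $\Delta_n(\lambda^+)$, hence onto $L_n(\lambda^+)$. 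Finally, $\ind_{n-1}^{\lambda^-}\Delta_{n-1}(\lambda')$ is a quotient of the projective $\ind_{n-1}^{\lambda^-}P_{n-1}(\lambda')$, whose head is $\bigoplus_\mu L_n(\mu)^{[\res_n^{\lambda'}L_n(\mu):L_{n-1}(\lambda')]}$ (by adjunction and $\dim\Hom(P_{n-1}(\lambda'),-)=[-:L_{n-1}(\lambda')]$); parts (i), the vanishing just proved, and the degree count restricting which $\mu$ can occur as a composition factor leave only $L_n(\lambda^+)$, with multiplicity one, so the head is simple and $\cong L_n(\lambda^+)$.

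The main obstacle is the non-vanishing in part (i), together with the closely related composition-factor control used repeatedly in (ii): one must know that $\res_n^{\lambda'}L_n(\mu)$ contains $L_{n-1}(\lambda')$ only for $\mu$ very close to $\lambda^+$ (essentially $\mu\in\{\lambda^-,\lambda^+\}$). For the Brauer and walled Brauer algebras this is exactly the finite bookkeeping afforded by the degree function --- each $\res$ moves degree by one box, and standard and simple modules coincide within a fixed degree, so the candidate $\mu$ are pinned to $\lambda^-,\lambda^+$ and then separated by the hypotheses on $\supp$. In a general tower of recollement one would instead descend, via the (weak) Morita equivalences of Theorems~\ref{equiv} and~\ref{walls}, to a minimal separating configuration where the statement is checked by hand.
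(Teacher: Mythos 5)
The paper does not prove this theorem: it is imported verbatim from \cite[Theorem 4.8]{cdm3}, so there is no internal proof to compare against. Your argument is correct and is essentially the expected one: identify $\res_n^{\lambda'}\Delta_n(\lambda^{\pm})$ and $\ind_{n-1}^{\lambda^-}\Delta_{n-1}(\lambda')$ from the support data, then control simples via exactness, duality, the adjunction, and a Grothendieck-group/degree count. Two points are worth making explicit. First, the self-duality of the simples and the compatibility of the duality with $\res_n$ and with block projection are facts about the (walled) Brauer algebras as cellular algebras (equivalently one can use $\res_n^{\lambda'}\nabla_n(\lambda^+)\cong\nabla_{n-1}(\lambda')$ and $L=\im(\Delta\to\nabla)$); they are not part of the abstract tower-of-recollement axioms, as you note at the end. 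Second, all of your degree bookkeeping hinges on the observation that $\lambda^{\pm}\in\supp(\lambda')$ together with the convention $\lambda^-<\lambda^+$ forces $\deg(\lambda')$ to sit strictly between $\deg(\lambda^-)$ and $\deg(\lambda^+)$ (one box below $\lambda^+$, one box above $\lambda^-$); this is what guarantees that $L_{n-1}(\lambda')$ can only arise from $L_n(\lambda^+)$ in the non-vanishing step of (i) and in the vanishing and simple-head steps of (ii), and it would be worth stating rather than leaving implicit. With those points spelled out, the proof is complete for the algebras at hand.
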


Suppose that $\lambda'$ and $\lambda^+$ are weights with
$\lambda'<\lambda^+$ such that for every weight $\tau'\in{\mathcal
  B}(\lambda')$ either (i) there is a unique weight
$\tau^+\in{\mathcal B}(\lambda^+)\cap\supp(\tau')$ and $\tau'$ is the
unique weight in ${\mathcal B}(\lambda')\cap\supp(\tau^+)$, or (ii)
there exists $\tau^-,\tau^+\in{\mathcal B}(\lambda^+)$ such that
$\tau'$ separates $\tau^-$ and $\tau^+$. Then we say that $\lambda'$
is in the \emph{lower closure} of $\lambda^+$. If every pair of
weights $\mu^-$ and $\mu^+$ in ${\mathcal B}(\lambda^+)$ separated by
some $\mu'\in{\mathcal B}(\lambda')$ satisfy the condition in Theorem
\ref{local}(ii) then we say that ${\mathcal B}(\lambda^+)$ has
\emph{enough local homomorphisms} with respect to ${\mathcal B}(\lambda')$.

We will need one new general result about translation functors for
towers of recollement not included in \cite{cdm3}. 

\begin{prop}\label{lc} Suppose that ${\mathcal B}(\lambda^+)$ has
enough local homomorphisms with respect to ${\mathcal B}(\lambda')$.
If $\lambda'$ is in the lower closure of $\lambda^+$ then
$$\ind^{\lambda^+}_n P_n(\lambda')\cong P_{n+1}(\lambda^+).$$
If further $\lambda'\in\Lambda_{n-2}$ then
$$\res^{\lambda^+}_n P_n(\lambda')\cong P_{n-1}(\lambda^+).$$
\end{prop}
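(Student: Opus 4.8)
The plan is to deduce this from the structure results already available for translation functors, by analysing how $\ind^{\lambda^+}_n$ acts on a projective cover $P_n(\lambda')$ via its standard filtration. Since $A_n$ is quasihereditary, $P_n(\lambda')$ has a filtration with sections $\Delta_n(\tau')$ where $\tau'$ ranges (with multiplicity $(P_n(\lambda'):\Delta_n(\tau')) = [\Delta_n(\tau'):L_n(\lambda')]$) over weights in ${\mathcal B}(\lambda')$ with $\tau' \geq \lambda'$ in the quasihereditary order. Because $\ind_n$ is exact and $\pr^{\lambda^+}_{n+1}$ is exact, $\ind^{\lambda^+}_n P_n(\lambda')$ inherits a filtration whose sections are $\pr^{\lambda^+}_{n+1}\ind_n\Delta_n(\tau')$. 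For each such $\tau'$ we are in one of the two cases defining ``$\lambda'$ in the lower closure of $\lambda^+$'': either $\tau'$ is translation-equivalent-like matched to a unique $\tau^+$, or $\tau'$ separates a pair $\tau^-<\tau^+$.

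First I would compute $\pr^{\lambda^+}_{n+1}\ind_n\Delta_n(\tau')$ in each case. In case (i), the hypotheses are exactly those of the translation-equivalence setup, so $\ind_n\Delta_n(\tau')$ projected to the $\lambda^+$-block gives a single standard $\Delta_{n+1}(\tau^+)$ (this is the standard-module behaviour of translation functors, cf.\ the construction in \cite[Section 4]{cdm3} underlying Theorem \ref{equiv}). In case (ii), Theorem \ref{local}(ii) applies — here we use the ``enough local homomorphisms'' hypothesis, which guarantees $\Hom(\Delta_n(\tau^+),\Delta_n(\tau^-))\neq 0$ — and gives that $\ind^{\tau^-}_n\Delta_n(\tau')$ (equivalently the $\lambda^+$-block projection of $\ind_n\Delta_n(\tau')$, since $\tau^-,\tau^+$ lie in ${\mathcal B}(\lambda^+)$) is a nonsplit extension of $\Delta_{n+1}(\tau^-)$ by $\Delta_{n+1}(\tau^+)$. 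In all cases $\ind^{\lambda^+}_n\Delta_n(\tau')$ has a $\Delta$-filtration with top section $\Delta_{n+1}(\tau^+)$. Summing over the filtration of $P_n(\lambda')$, we conclude that $\ind^{\lambda^+}_n P_n(\lambda')$ has a $\Delta$-filtration, hence is a module in ${\mathcal F}(\Delta)$ for the $\lambda^+$-block.

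Next I would identify this module with $P_{n+1}(\lambda^+)$. The key point is that $\ind^{\lambda^+}_n P_n(\lambda')$ is projective in the $\lambda^+$-block: $\ind_n$ is left adjoint to the exact functor $\res_{n+1}$, so $\ind_n$ sends projectives to projectives, and applying the exact block projection $\pr^{\lambda^+}_{n+1}$ preserves projectivity; hence $\ind^{\lambda^+}_n P_n(\lambda')$ is a projective module whose $\Delta$-filtration multiplicities we have just computed. To pin down which indecomposable projective (or direct sum) it is, I would show its head is $L_{n+1}(\lambda^+)$ and it is indecomposable. By adjunction, $\Hom(\ind^{\lambda^+}_n P_n(\lambda'), L_{n+1}(\mu)) \cong \Hom(P_n(\lambda'), \res^{\lambda^+}_n L_{n+1}(\mu))$ for $\mu\in{\mathcal B}(\lambda^+)$; this is nonzero iff $L_n(\lambda')$ appears in the head of $\res^{\lambda^+}_n L_{n+1}(\mu)$. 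Using Theorem \ref{local}(i) (which gives $\res^{\lambda^+}_n L_{n+1}(\mu^+)\cong L_n(\mu')$ when $\mu'$ separates $\mu^-,\mu^+$) together with the translation-equivalence statement of Theorem \ref{equiv}(i) in case (i), one checks that among all $\mu\in{\mathcal B}(\lambda^+)$ the weight $\mu=\lambda^+$ is the one whose restriction has $L_n(\lambda')$ in its head, and it occurs with multiplicity one. Hence $\ind^{\lambda^+}_n P_n(\lambda')$ has simple head $L_{n+1}(\lambda^+)$, so it is indecomposable projective with that head, i.e.\ $P_{n+1}(\lambda^+)$. The second statement, $\res^{\lambda^+}_n P_n(\lambda')\cong P_{n-1}(\lambda^+)$ when $\lambda'\in\Lambda_{n-2}$, follows by the same argument with the roles of $\ind$ and $\res$ exchanged: $\res_n$ is exact and (since $\lambda'\in\Lambda_{n-2}$, so that the relevant combinatorics stabilises and $\res_n$ has an exact right adjoint on the relevant blocks) sends projectives to projectives, one reads off the $\Delta$-filtration from Proposition \ref{indres}, and the head computation uses Theorem \ref{equiv}(i) and \ref{local}(i) in the downward direction.

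The main obstacle I anticipate is the head computation — showing that $L_{n+1}(\lambda^+)$ occurs exactly once in the head and that no other simple $L_{n+1}(\mu)$ with $\mu\in{\mathcal B}(\lambda^+)$ does. This requires controlling $\res^{\lambda^+}_n L_{n+1}(\mu)$ for \emph{all} $\mu$ in the block, not just for $\mu=\lambda^+$; the ``lower closure'' and ``enough local homomorphisms'' hypotheses are precisely what make this tractable, since they force every $\mu\in{\mathcal B}(\lambda^+)$ to be either separated by or matched to a weight of ${\mathcal B}(\lambda')$, so that Theorems \ref{equiv} and \ref{local} cover all cases. A secondary technical point is justifying that $\res_n$ (as opposed to $\ind_n$) preserves projectivity in the second statement — this is where the hypothesis $\lambda'\in\Lambda_{n-2}$ enters, ensuring we are working below the ``top layer'' of the tower where $\res_n$ behaves like a genuine (exact, adjoint-on-both-sides) translation functor, exactly as in the proof of Theorem \ref{equiv}(iii).
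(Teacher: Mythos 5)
Your argument for the first isomorphism is essentially the paper's: projectivity of $\ind^{\lambda^+}_n P_n(\lambda')$ comes for free from induction, and the identification with $P_{n+1}(\lambda^+)$ is made by computing the head via Frobenius reciprocity, using the lower-closure and enough-local-homomorphisms hypotheses together with Theorems \ref{equiv} and \ref{local} to show that $\res^{\lambda'}_{n+1}L_{n+1}(\tau)$ can only contribute $L_n(\lambda')$ when $\tau=\lambda^+$. (Minor slip: the adjoint you want is $\res^{\lambda'}_{n+1}$, not $\res^{\lambda^+}_n$.) The $\Delta$-filtration analysis you begin with is not needed for this and the paper omits it, but it is harmless.

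The second statement is where your proposal has a genuine gap. You assert that $\res_n$ ``sends projectives to projectives'' on the relevant blocks because it ``has an exact right adjoint'' below the top layer, but you do not produce that adjoint, and restriction in a tower of recollement does not preserve projectivity in general. The paper's mechanism is concrete and different from ``the same argument with $\ind$ and $\res$ exchanged'': since $\lambda'\in\Lambda_{n-2}$, one has $G_{n-2}P_{n-2}(\lambda')\cong P_n(\lambda')$ (\cite[Lemma 4.3]{cdm3}), and the tower axiom $\res_nG_{n-2}\cong\ind_{n-2}$ then gives
$$\res^{\lambda^+}_nP_n(\lambda')\cong\res^{\lambda^+}_nG_{n-2}P_{n-2}(\lambda')\cong\ind^{\lambda^+}_{n-2}P_{n-2}(\lambda')\cong P_{n-1}(\lambda^+),$$
the last step being the first part of the Proposition. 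In other words, the restriction is rewritten as an induction, so no independent head computation or projectivity claim for $\res_n$ is required. Your intuition that the hypothesis $\lambda'\in\Lambda_{n-2}$ is what saves the day is correct, but without the globalisation identity the step does not go through as written.
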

\begin{proof}
The module $\ind_n^{\lambda^+} P_n(\lambda')$ is clearly
projective, as induction (and taking a direct summand) takes
projectives to projectives. 

Suppose that $\tau\in{\mathcal B}(\lambda^+)$ and that
$$\res_{n+1}^{\lambda'}L_{n+1}(\tau)\neq 0.$$ By our
assumptions and Theorems \ref{equiv} and \ref{local} this implies that
$\tau\in\supp(\mu')$ for some $\mu'\in{\mathcal B}(\lambda')$ and
$\tau=\mu^+$. From this we see that if
$$\Hom_{n+1}(\ind^{\lambda^+}_nP_n(\lambda'),L_{n+1}(\tau))
=\Hom_n(P_n(\lambda'),\res^{\lambda'}_{n+1}L_{n+1}(\tau))$$
is non-zero then $\tau=\mu^+$ for some $\mu'\in{\mathcal
  B}(\lambda')$. But 
$$\Hom_n(P_n(\lambda'),\res^{\lambda'}_{n+1}L_{n+1}(\mu^+))
=\Hom_n(P_n(\lambda'),\pr_n^{\lambda'}L(\mu'))=\delta_{\lambda'\mu'}$$
by Theorems \ref{equiv} and \ref{local}. Thus 
$\ind_n^{\lambda^+}P_n(\lambda')$ has simple head
$L_{n+1}(\lambda^+)$, and hence is equal to $P_{n+1}(\lambda^+)$ as required.

Now suppose that further $\lambda'\in\Lambda_{n-2}$. By \cite[Lemma
  4.3]{cdm3} we have 
$$G_{n-2}P_{n-2}(\lambda')\cong P_{n}(\lambda').$$
By the tower of recollement axioms we have
$$\ind_{n-2}^{\lambda^+}M\cong\res_n^{\lambda^+}G_{n-2}M$$
for any $A_{n-2}$-module $M$ and hence
$$\res_n^{\lambda^+}P_n(\lambda')
\cong\res_n^{\lambda^+}G_{n-2}P_{n-2}(\lambda')
\cong \ind^{\lambda^+}_{n-2}P_{n-2}(\lambda')\cong
P_{n-1}(\lambda^+)$$
using the first part of the Proposition.
\end{proof}

\begin{rem}
It was shown in \cite{cdm} that the Brauer algebras form a tower of
recollement. Similarly, in \cite[Sections 2-3]{cddm} it was shown that
the walled Brauer algebras form a tower of recollement by using
alternately the functors $\res^L$ (and $\ind^L$) and $\res^R$ (and
$\ind^R$). The existence of enough local homomorphisms was shown for
the Brauer algebra in \cite[Theorem 3.4]{dhw} and for the walled
Brauer algebra in \cite[Theorem 6.2]{cddm}. Thus we can apply the
results of this section to these algebras. 

When using the notation
$\ind_{r,s}^{\lambda}$ for the walled Brauer algebra, the choice of
$\ind_{r,s}^L$ or $\ind_{r,s}^R$ will be such that the weight
$\lambda$ makes sense for the resulting algebra (and similarly for
$\res_{r,s}^{\lambda}$).
\end{rem}

\begin{rem}
There are reflection geometries controlling the block structure of the
Brauer \cite{cdm2} and walled Brauer algebras \cite{cddm} which we
will review shortly. These define a system of facets, and in
\cite{cdm2} it was shown that two weights in the same facet for the
Brauer algebra have weakly Morita equivalent blocks in the sense of
Theorem \ref{walls}. This required certain generalised induction and
restriction functors for the non-alcove cases. Similar functors can be
defined for the walled Brauer algebras: it is a routine but lengthy
exercise to verify that the construction in \cite[Section 5]{cdm3} can
be extended to the walled Brauer case. Thus we also have weak Morita
equivalences between weights in the same facet in the walled Brauer
case.
\end{rem}

\section{Oriented cap diagrams}\label{capsec}

In this section we will describe the construction of oriented cap
diagrams associated to certain pairs of weights for the walled Brauer
algebra. These diagrams were introduced by Brundan and Stroppel in \cite{bs1}
to study Khovanov's diagram algebra. We will see later that they give
precisely the combinatoric required to describe decomposition numbers
for the walled Brauer algebra.

Let $\{\epsilon_i:i\in\ZZ, i\neq0\}$ be a set of formal symbols, and
set
$$X=\prod_{i\in\ZZo}\ZZ\epsilon_i.$$
For $x\in X$ we write 
$$x=(\ldots,x_{-3},x_{-2},x_{-1};x_1,x_2,x_3,\ldots)$$
where $x_i$ is the coefficient of $\epsilon_i$. We define $A^+\subset
X$ by
$$A^+=\{x\in X : \cdots >x_{-3}>x_{-2}>x_{-1}, x_1>x_2>x_3>\cdots\}$$
and for $\delta\in\ZZ$ we define
$$\rho=\rho_\delta=(\cdots,3,2,1;\delta,\delta-1,\delta-2,\cdots)\in A^+.$$

Given a bipartition $\lambda=(\lambda^L,\lambda^R)$ with
$\lambda^L=(\lambda_1^L,\ldots,\lambda_r^L)$ and 
$\lambda^R=(\lambda_1^R,\ldots,\lambda_s^R)$, we define
$\bar{\lambda}\in X$ by
$$\bar{\lambda}=(\ldots,0,0,-\lambda^L_r,-\lambda^L_{r-1},\ldots,-\lambda^L_1;
\lambda_1^R,\ldots,\lambda_s^R,0,0,\ldots).$$
 Given such a bipartition
$\lambda$ we define
$$x_{\lambda}=x_{\lambda,\rho}=\bar{\lambda}+\rho_{\delta}.$$ Note
 that $x_{\lambda}\in A^+$. In this way we can embed the sets
 $\Lambda_{r,s}$ labelling simple modules for $B_{r,s}(\delta)$ as subsets of
 $A^+$.

Consider the group $W$ of all permutations of finitely many elements from
the set $\ZZo$ (so $W=\langle (i,j): i,j\in\ZZo\rangle$ where $(ij)$
is the usual notation for transposition of a pair $i$ and $j$). This
group acts on $X$ by place permutations.

The main result (Corollary 10.3) in \cite{cddm} describes the blocks of
$B_{r,s}(\delta)$ in terms of orbits of certain finite reflection
groups inside $W$. However it is easy to see from the proof that the
following version also holds.

\begin{thm}
Two simple modules $L_{r,s}(\lambda)$ and $L_{r,s}(\mu)$
are in the same block if and only if $x_{\lambda}=wx_{\mu}$ for some
$w\in W$.
\end{thm}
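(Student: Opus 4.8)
The plan is to deduce this block description from the main combinatorial result of \cite{cddm} by identifying the relevant reflection group action. Recall that Corollary 10.3 of \cite{cddm} describes the blocks of $B_{r,s}(\delta)$ in terms of orbits of a finite reflection group acting on the weight labels; the task here is to re-express that group in terms of the embedding $\lambda\mapsto x_\lambda$ into $A^+\subset X$ and to check that the orbits agree with the $W$-orbits intersected with the image of $\Lambda_{r,s}$. So first I would recall precisely the statement from \cite{cddm}: the blocks are governed by a group generated by certain reflections $s_{ij}$ acting on the $\bar\lambda$-coordinates (equivalently on $x_\lambda=\bar\lambda+\rho_\delta$), where the indices $i,j$ range over a finite set determined by $r,s$ and $\delta$. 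Under our embedding these reflections are exactly the transpositions $(i,j)$ of coordinates indexed by $\ZZo$, so the reflection group from \cite{cddm} sits inside $W$ as a parabolic-type subgroup.

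The forward direction is then essentially immediate: if $L_{r,s}(\lambda)$ and $L_{r,s}(\mu)$ lie in the same block, then by \cite[Cor.~10.3]{cddm} they lie in the same orbit of that finite reflection group, hence $x_\lambda$ and $x_\mu$ differ by an element of a subgroup of $W$, so $x_\lambda=wx_\mu$ for some $w\in W$. For the converse I would argue as follows: suppose $x_\lambda=wx_\mu$ with $w\in W$. Since $w$ moves only finitely many coordinates and both $x_\lambda,x_\mu$ lie in $A^+$ (so their entries are strictly decreasing on each side and agree with $\rho_\delta$ outside a finite set), the equality $x_\lambda=wx_\mu$ forces $x_\lambda$ and $x_\mu$ to be rearrangements of one another as multisets of coordinates, with the rearrangement happening within the finite ``support'' region. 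One then checks that any such rearrangement can be realised by the generators of the finite reflection group appearing in \cite{cddm} --- i.e.\ that $w$ may be taken inside that subgroup --- and concludes via \cite[Cor.~10.3]{cddm} that the two simples are in the same block. The phrase ``it is easy to see from the proof'' signals that the real content is already in \cite{cddm}: the proof there actually establishes that two weights are in the same block precisely when their $x$-values are $W$-conjugate, and only afterwards repackages this as a finite reflection group orbit.

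The main obstacle is the converse direction, specifically showing that $W$-conjugacy of $x_\lambda$ and $x_\mu$ is not strictly weaker than conjugacy under the finite reflection group from \cite{cddm}. The subtlety is that $W$ a priori allows arbitrary finite permutations of the $\ZZo$-indexed coordinates, including ones that mix the ``left'' block of indices with the ``right'' block, whereas the group in \cite{cddm} may be constrained; one must verify that the constraints are automatically satisfied once one knows $x_\lambda,x_\mu\in A^+$ and that both arise from genuine bipartitions in $\Lambda_{r,s}$. I expect this to reduce to a bookkeeping argument: the coordinates of $x_\lambda$ that can possibly be permuted nontrivially are exactly those lying in a bounded window determined by $r,s,\delta$, the entries outside this window being forced to equal $\rho_\delta$ by the shape of $\bar\lambda$, so $W$-conjugacy automatically localises to the finite window, where it coincides with the reflection group orbit. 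Once this localisation is in hand, the theorem follows directly from \cite[Cor.~10.3]{cddm}.
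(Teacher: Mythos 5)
Your proposal is correct and follows essentially the same route as the paper, which gives no argument beyond observing that the statement follows from the proof of \cite[Corollary 10.3]{cddm}; your localisation step (that a finitely supported $w\in W$ with $x_\lambda=wx_\mu$ can be taken to permute only the finitely many positions where $x_\lambda$ and $x_\mu$ differ from $\rho_\delta$, hence lies in the finite reflection group of \cite{cddm}) is exactly the bookkeeping the authors leave implicit.
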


We will abuse terminology and say that $x_{\lambda}$ and $x_{\mu}$ are
in the same block if they satisfy the conditions of this theorem.

To each element $x\in A^+$ we wish to associate a diagram with
vertices indexed by $\ZZ$, each labelled with one of the symbols
$\circ$, $\times$, $\wedge$, $\vee$. We do this in the following
manner. Given $x\in A^+$ define
$$I_{\vee}(x)=\{x_i:i<0\}\quad\text{and}\quad
I_{\wedge}(x)=\{x_i:i>0\}.$$
Now vertex $n$ in the diagram associated to $x$ is labelled by
\begin{equation}\label{assign}
\left\{\begin{array}{cl}
\circ &\text{if}\ n\notin I_{\vee}(x)\cup I_{\wedge}(x)\\
\times&\text{if}\ n\in I_{\vee}(x)\cap I_{\wedge}(x)\\
\vee&\text{if}\ n\in I_{\vee}(x)\backslash I_{\wedge}(x)\\
\wedge&\text{if}\ n\in I_{\wedge}(x)\backslash
I_{\vee}(x).\end{array}\right.
\end{equation}

\begin{example}\label{bi}
To illustrate the above construction, consider the bipartition
$\lambda=(\lambda^L,\lambda^R)$ where $\lambda^L=(2,2,1)$ and
$\lambda^R=(3,2)$, and take $\delta=2$. Then
$$\rho_{\delta}=(\ldots,4,3,2,1;2,1,0,-1,-2,\ldots)$$
and 
$$\bar{\lambda}=(\ldots,0,-1,-2,-2;3,2,0,0,0,\ldots)$$
and hence
$$x_{\lambda}=\bar{\lambda}+\rho_{\delta}=
(\ldots,6,5,4,2,0,-1;5,3,0,-1,-2,-3\ldots).$$
Part of the associated diagram is illustrated in Figure \ref{basic}.
\end{example}

\begin{figure}[ht]
\includegraphics{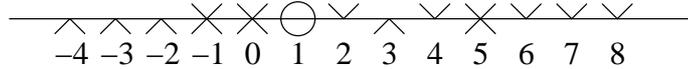}
\caption{The diagram associated to $((2,2,1),(3,2))$ with $\delta=2$.}
\label{basic}
\end{figure}

Note that any element in $A^+$ is uniquely determined by its diagram,
and every such diagram corresponds to an element in $A^+$. For this
reason we will use the notation $x$ (or $x_{\lambda}$) for both.

\begin{rem}
It is easy to see that two elements in $A^+$ are in the same $W$-orbit
if and only if they are obtained from each other by permuting pairwise
a finite number of $\wedge$s and $\vee$s.
\end{rem}

We define a partial order $\leq$ on $A^+$ by setting $x<y$ if $y$ is
obtained from $x$ by swapping a $\vee$ and a $\wedge$ so that the
$\wedge$ moves to the right, and extending by transitivity. Note that
if $\lambda,\mu\in \Lambda_{r,s}$ then $x_{\lambda}\leq x_{\mu}$ if
and only if $\lambda$ and $\mu$ are in the same block and $\lambda\leq
\mu$ (where this is the natural order on bipartitions from Section
\ref{basics}). Therefore we use the same symbol for both orders.

\begin{example} There is only one element in $A^+$ smaller than the
  element $x_{\lambda}$ in Example \ref{bi}. This corresponds to the
  diagram in Figure \ref{min}.
\end{example}

\begin{figure}[ht]
\includegraphics{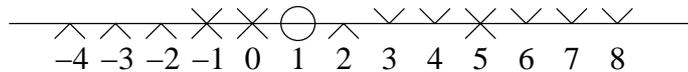}
\caption{The unique diagram smaller than the diagram in Figure \ref{basic}.}
\label{min}
\end{figure}

\begin{rem} 
For a bipartition $\lambda=(\lambda^L,\lambda^R)$, the diagram for the
element $x_{\lambda}\in A^+$ is labelled by $\wedge$ for all $n<<0$
and by $\vee$ for all $n>>0$. Thus there are only finitely many
$x<x_{\lambda}$. 
\end{rem}

To each bipartition $\lambda$ (or to each diagram labelled by $\wedge$
for all $n<<0$ and by $\vee$ for all $n>>0$) we associate a \emph{cap
  diagram} $c_{\lambda}$ in the following (recursive) manner.

In $x_{\lambda}$ find a pair of vertices labelled $\vee$ and $\wedge$
in order from left to right that are neighbours in the sense that
there are only $\circ$s, $\times$s, or vertices already joined by caps
at an earlier stage between them. Join this pair of vertices together with a
cap. Repeat this process until there are no more such $\vee$ $\wedge$
pairs. (This will occur after a finite number of steps.) Finally, draw
an infinite ray upwards at all remaining $\wedge$s and $\vee$s. Any
vertices which are not connected to a ray or a cap are called free
vertices.

\begin{example}
In Figures \ref{exa} and \ref{exb} we give two examples of elements
$x_{\lambda}$ and their associated cap diagrams.
\end{example}

\begin{figure}[ht]
\includegraphics{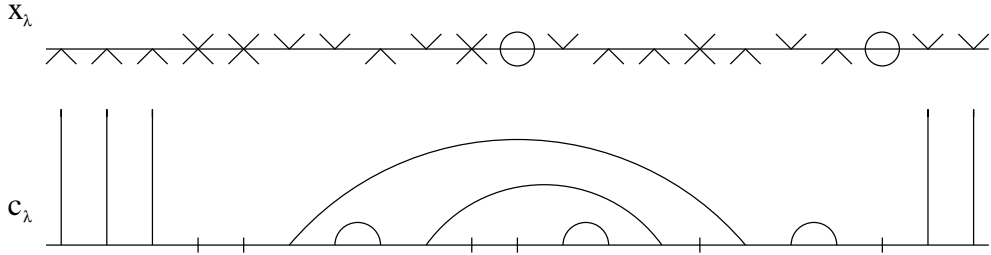}
\caption{An example of the cap diagram construction.}
\label{exa}
\end{figure}

\begin{figure}[ht]
\includegraphics{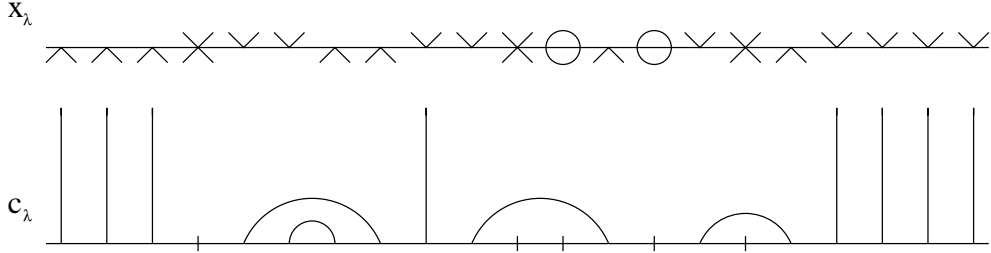}
\caption{Another example of the cap diagram construction.}
\label{exb}
\end{figure}

To a cap diagram $c$ and an element $x_\lambda\in A^+$ we can
associate a \emph{labelled cap diagram} $cx_{\lambda}$ by writing each label
on a vertex of $x_{\lambda}$ underneath the corresponding vertex of
$c$. We call such a diagram an \emph{oriented cap diagram} if the
following conditions all hold:
\begin{enumerate}
\item each free vertex in $c$ is labelled by a $\circ$ or $\times$ in
  $x_{\lambda}$;
\item the vertices at the end of each cap in $c$ are labelled by
  exactly one $\wedge$ and one $\vee$ in $x_{\lambda}$;
\item each vertex at the bottom of a ray in $c$ is labelled by a
  $\wedge$ or $\vee$ in $x_{\lambda}$;
\item it is impossible to find two rays in $c$ whose vertices are
  labelled $\vee$ and $\wedge$ in order from left to right in
  $x_{\lambda}$.
\end{enumerate}

As each cap in an oriented cap diagram is labelled by exactly one
$\wedge$ and one $\vee$, these symbols induce an orientation on the
cap (as though they were arrows). The \emph{degree}
$\deg(cx_{\lambda})$ of an oriented cap diagram $cx_{\lambda}$ is the
total number of clockwise caps that it contains.

\begin{rem}
Given a bipartition $\lambda$, the labelled cap diagram
$c_{\lambda}x_{\lambda}$ is clearly oriented, with all caps having a
counterclockwise orientation. Thus the degree of
$c_{\lambda}x_{\lambda}$ is $0$.
\end{rem}

For two bipartitions $\lambda$ and $\mu$ we define $d_{\lambda\mu}(q)$
to be $q^{\deg(c_{\lambda}x_{\mu})}$ if (i) $\lambda$ and $\mu$ are in
the same $W$-orbit, and (ii)
$c_{\lambda}x_{\mu}$ is an oriented cap diagram. We define
$d_{\lambda\mu}(q)$ to be $0$ otherwise. In other words,
$d_{\lambda\mu}(q)\neq 0$ if and only if $x_{\mu}$ is obtained from
$x_{\lambda}$ by swapping the order of the elements in some of the
pairs $\vee$, $\wedge$ which are joined up in $c_{\lambda}$, and in
that case $\deg(c_{\lambda}x_{\mu})$ is the number of pairs whose
elements have been swapped.

\begin{example}
Let $x_{\lambda}$ and $c_{\lambda}$ be as in Figure \ref{exa}. For
$x_{\mu}$ as illustrated in Figure \ref{degcap} we see that
$c_{\lambda}x_{\mu}$ is an oriented cap diagram with
$\deg(c_{\lambda}x_{\mu})=3$. Hence we have that
$$d_{\lambda\mu}(q)=q^3.$$ 
\end{example}

\begin{figure}[ht]
\includegraphics{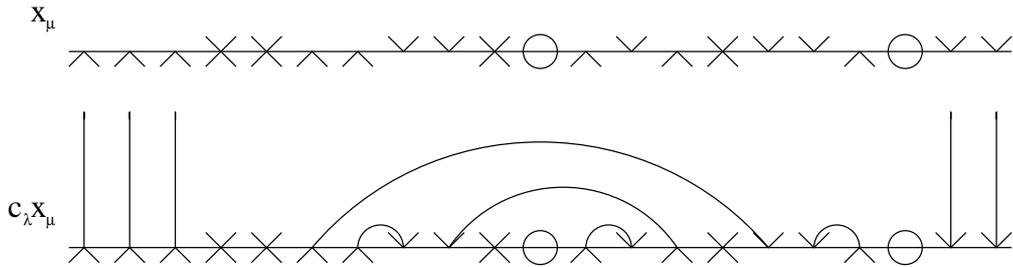}
\caption{An example of a nontrivial degree calculation.}
\label{degcap}
\end{figure}

\begin{rem}
Brundan and Stroppel have shown how to associate weights in a set
similar to $A^+$ to cap diagrams and oriented cap diagrams in order
to use this combinatoric to describe the representation theory to the
general linear supergroup GL$(m|n)$ \cite{bs4}. Note the difference
between these two sets, and the difference between the assignments of
labels in \cite[(1.6)]{bs4} and in (\ref{assign}).
\end{rem}

We are interested in determining the decomposition numbers for the
walled Brauer algebras. As noted in Section \ref{basics} this is equivalent
to determining the
$$D_{\lambda\mu}=(P_{r,s}(\lambda):\Delta_{r,s}(\mu)).$$
Our eventual aim is to show 

\begin{thm} \label{mainresult}
Given $\lambda$ and $\mu$ in $\Lambda_{r,s}$ we have
$$D_{\lambda\mu}=d_{\lambda\mu}(1).$$
\end{thm}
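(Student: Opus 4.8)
The plan is to prove Theorem~\ref{mainresult} by induction, using the machinery of Sections~\ref{tor}--\ref{capsec} to reduce the computation of $D_{\lambda\mu}$ to an inductive step that matches the recursive definition of the cap-diagram polynomials $d_{\lambda\mu}(q)$. The natural induction is on the degree of $\mu$ (equivalently, on the number of vertices $x<x_\mu$), so that the base cases are the semisimple blocks, where both $D_{\lambda\mu}$ and $d_{\lambda\mu}(1)$ are $\delta_{\lambda\mu}$. The key geometric input is the block description (the Theorem preceding this one): $L_{r,s}(\lambda)$ and $L_{r,s}(\mu)$ lie in the same block iff $x_\lambda$ and $x_\mu$ are in the same $W$-orbit, i.e.\ differ by permuting finitely many $\wedge$s and $\vee$s, with the order $\leq$ on $A^+$ agreeing with the quasihereditary order. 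So we may work inside a fixed block $\mathcal{B}(\lambda^+)$ all at once.

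The inductive step has two ingredients. First, for weights $\lambda'$ lying in the \emph{lower closure} of some $\lambda^+$, Proposition~\ref{lc} gives $\ind^{\lambda^+}_n P_n(\lambda')\cong P_{n+1}(\lambda^+)$ (and the $\res$ version), so that the $\Delta$-filtration multiplicities of $P_{n+1}(\lambda^+)$ are computed from those of $P_n(\lambda')$ via the short exact sequences of Proposition~\ref{indres} applied to each $\Delta_n(\mu)$ in the filtration. Combinatorially, passing from $x_{\lambda'}$ to $x_{\lambda^+}$ corresponds to adding one more cap to the cap diagram (the new $\vee\wedge$ pair created by the translation), and the three cases in the definition of ``lower closure'' match exactly the three possibilities for how a given $\mu$ interacts with the new cap: $\mu$ does not involve the new pair of vertices (multiplicity unchanged), $\mu'=\mu^+$ lies above a separating weight, or $\mu$ arises in the submodule of the relevant short exact sequence. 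I expect the bookkeeping to show that $(P_{n+1}(\lambda^+):\Delta_{n+1}(\mu^+))$ equals $(P_n(\lambda'):\Delta_n(\mu'))$ plus possibly $(P_n(\lambda'):\Delta_n(\mu'^-))$ when $\mu'$ separates $\mu'^-$ and $\mu^+$ — and on the cap side, $d_{\lambda^+\mu^+}(q) = d_{\lambda'\mu'}(q)$ or $d_{\lambda'\mu'}(q)+q\,d_{\lambda'\mu'^-}(q)$ respectively, which at $q=1$ gives precisely the same recursion. Second, for weights in the same facet one uses the weak Morita equivalences of Theorem~\ref{walls} (available for both Brauer and walled Brauer by the Remarks), which preserve $D_{\lambda\mu}$; combinatorially, translation-equivalent weights have isomorphic cap diagrams, so $d_{\lambda\mu}(1)$ is likewise unchanged. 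Together these let us walk from any block down to a semisimple one, building both sides by the same recursion.

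The main obstacle I anticipate is verifying that every weight in a given block really can be reached by a chain of translation equivalences and lower-closure steps of the restricted form covered by Proposition~\ref{lc} and Theorem~\ref{local}, and that at each step the hypothesis ``$\mathcal{B}(\lambda^+)$ has enough local homomorphisms with respect to $\mathcal{B}(\lambda')$'' genuinely holds. This is really a statement about the reflection geometry: one needs that any block is connected, through the facet/wall structure, to a semisimple block by moves that only ever cross one wall at a time in the controlled way these theorems require. Establishing this — presumably by an explicit analysis of the type $A$ (walled) and type $D$ (Brauer) geometries, exhibiting for each block a distinguished chain of translations down to the trivial block — is where the real work lies; once it is in place, the matching of the two recursions is a finite, if fiddly, diagram chase using Propositions~\ref{indres} and~\ref{lc} and the explicit combinatorics of adding a cap. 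A secondary point requiring care is the correct identification, at each translation step, of which $\vee\wedge$ pair of $x_\mu$ is being acted on, so that the degree shift $q^{\deg}$ on the cap side is accounted for correctly (harmless here since we evaluate at $q=1$, but it must be tracked to know which term of the short exact sequence contributes).
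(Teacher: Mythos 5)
Your overall architecture matches the paper's: induct through the block, handling each step either by a translation equivalence (Theorem \ref{equiv}) or by a lower-closure step (Proposition \ref{lc} plus adjointness), and match the resulting recursion against one satisfied by the $d_{\lambda\mu}$. But the obstacle you flag at the end --- that every non-minimal weight admits a move of the required restricted form, with enough local homomorphisms --- is not a side issue to be dispatched later; it is the entire content of the proof, and you leave it unresolved. The paper resolves it purely diagrammatically: if $x_\lambda$ is not minimal in its block then $c_\lambda x_\lambda$ contains a small cap or curl, and the finitely many possible local configurations of that small cap or curl (Figures \ref{capcase} and \ref{curlcase}) each visibly produce a $\lambda'$, obtained by removing one box, that is either translation equivalent to $\lambda$ or in whose lower closure $\lambda$ sits; the support and separation conditions are read off directly from the diagrams (Figures \ref{trani}, \ref{tranii}), and ``enough local homomorphisms'' is quoted from the earlier homomorphism constructions for the Brauer and walled Brauer algebras. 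The induction is accordingly on $|\lambda|$ and on minimality of $x_\lambda$ in the order $\leq$, not on the degree of $\mu$, and the base case is a weight minimal in its block (not a semisimple block), where $D_{\lambda\mu}=\delta_{\lambda\mu}$ follows from the quasihereditary order.

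Second, your anticipated recursion in the wall-crossing case is incorrect. From $\ind^{\lambda^+}P(\lambda')\cong P(\lambda^+)$ and adjointness one gets
$$D_{\lambda^+\mu}=\dim\Hom\bigl(P(\lambda'),\res^{\lambda'}\Delta(\mu)\bigr),$$
which vanishes unless $\mu=\mu^{\pm}$ for some $\mu'\in{\mathcal B}(\lambda')$, and in both of those cases equals $D_{\lambda'\mu'}$; there is no additional summand of the form $(P(\lambda'):\Delta(\mu'^-))$ (indeed the weights separated by $\mu'$ live one level up, so that term is not even well formed). The matching combinatorial identities are $d_{\lambda^+\mu^+}(q)=d_{\lambda'\mu'}(q)$ and $d_{\lambda^+\mu^-}(q)=q\,d_{\lambda'\mu'}(q)$ (Proposition \ref{recp}(ii)), both giving $d_{\lambda'\mu'}(1)$ at $q=1$, not the sum formula you propose. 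With the recursion corrected and the small cap/curl case analysis actually supplied, your plan becomes the paper's proof.
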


We will first introduce a corresponding formalism for the Brauer
algebra, so that the two cases can be considered simultaneously.

\section{Oriented curl diagrams}\label{twicapsec}

We will introduce analogues of oriented cap diagrams for use in the
ordinary Brauer algebra case. As the two cases will ultimately be very
similar, we use the same notation. Which case is being considered
later will be clear from context.

Let $\{\epsilon_i:i\in\NN\}$ be a set of formal symbols, and set 
$$X=\left(\prod_{i\in\NN}\ZZ\epsilon_i\right)\bigcup
\left(\prod_{i\in\NN}(\ZZ+\frac{1}{2})\epsilon\right).$$
For $x\in X$ we write 
$$x=(x_1,x_2,\ldots)$$
where $x_i$ is the coefficient of $\epsilon_i$. We define $A^+\subset
X$ by
$$A^+=\{x\in X: x_1>x_2>\cdots\}$$ 
and for $\delta\in\ZZ$ define 
$$\rho=\rho_{\delta}=(-\frac{\delta}{2},-\frac{\delta}{2}-1,
-\frac{\delta}{2}-2,-\frac{\delta}{2}-3,\ldots)\in A^+.$$
Given a partition $\lambda$ we define
$$x_{\lambda}=\lambda+\rho_{\lambda}\in A^+.$$

Consider the group 
$$W=\langle (i,j),(i,j)_-:i\neq j\in \NN\rangle$$ 
where $(ij)$ is the usual notation for transposition of a pair $i$ and
$j$, and $(i,j)_-$ is the element which transposes $i$ and $j$ and
also changes their signs. Then $W$ acts naturally on $X$, with $(ij)$
acting as place permutations, and 
$$(ij)_-(x_1,x_2,\ldots,x_i,\ldots,x_j,\ldots)
=(x_1,x_2,\ldots,-x_j,\ldots,-x_i,\ldots).$$

The main result in \cite{cdm2} describes the blocks of $B_n(\delta)$
in terms of certain finite reflection groups inside $W$. Just as in
the walled Brauer case, it is easy to see that the following version
holds. Here we denote the transpose of a partition $\lambda$ by
$\lambda^T$. 

\begin{thm}
Two simple modules $L_n(\lambda^T)$ and $L_n(\mu^T)$ are in the same
block if and only if $x_{\lambda}=wx_{\mu}$ for some $w\in W$.
\end{thm}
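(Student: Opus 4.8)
The plan is to transfer the block description from the walled Brauer case, rather than re-prove anything from scratch. Recall that the main result of \cite{cdm2} gives the blocks of $B_n(\delta)$ as orbits of certain \emph{finite} reflection groups sitting inside $W$. These finite groups are the ``integral Weyl groups'' attached to a given weight: for $x_\lambda\in A^+$ one forms the subgroup $W_{x_\lambda}\leq W$ generated by those reflections $(ij)$ and $(ij)_-$ whose root evaluates to an integer on $x_\lambda$ (equivalently, those $(ij)$ with $x_i-x_j\in\ZZ$ and those $(ij)_-$ with $x_i+x_j\in\ZZ$). The point of the reformulation is that, because the entries of $\rho_\delta$ lie in a single coset of $\ZZ$ (either $\ZZ$ or $\ZZ+\tfrac12$) and adding a partition does not change that coset, \emph{every} generator of $W$ already has integral root value on every $x_\lambda$ we care about. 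Hence $W_{x_\lambda}$ restricted to the relevant coordinates is all of the ambient group, and the orbit of $x_\lambda$ under the finite integral group coincides with its orbit under the (locally finite) group $W$.

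First I would state precisely the version of the $\cite{cdm2}$ block theorem being used, namely that $L_n(\lambda^T)$ and $L_n(\mu^T)$ lie in the same block iff $x_\mu\in W_{x_\lambda}\cdot x_\lambda$ where $W_{x_\lambda}$ is the integral reflection subgroup above. (The transpose appears because the combinatorics of $\cdot^+$ is set up with columns playing the role of rows; this is purely a bookkeeping convention inherited from the earlier paper.) Second, I would observe that for any partition $\lambda$, all but finitely many coordinates of $x_\lambda$ equal the corresponding coordinate of $\rho_\delta$, and that these coordinates form a strictly decreasing sequence within one $\ZZ$-coset. Consequently for any two indices $i\neq j$ we have $x_i-x_j\in\ZZ$ automatically, so $(ij)\in W_{x_\lambda}$; and since the whole sequence lies in $\ZZ$ or in $\ZZ+\tfrac12$ we have $x_i+x_j\in\ZZ$ as well (in the half-integer case $x_i+x_j\in\ZZ$ exactly, in the integer case trivially), so $(ij)_-\in W_{x_\lambda}$ too. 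Thus $W_{x_\lambda}=W$ and the two orbit notions agree.

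Third, I would note the easy direction: if $x_\lambda=wx_\mu$ for some $w\in W$, then in particular $w$ can be chosen to move only finitely many coordinates (every element of $W$ does), and lies in $W_{x_\mu}=W$, so $L_n(\lambda^T)$ and $L_n(\mu^T)$ are in the same block by the $\cite{cdm2}$ theorem. Conversely, if they are in the same block then $x_\mu\in W_{x_\lambda}\cdot x_\lambda\subseteq W\cdot x_\lambda$, giving $x_\lambda=wx_\mu$ for suitable $w\in W$. Combining the two directions yields the claimed equivalence. This is exactly parallel to the reformulation of the walled Brauer block result recorded just before Example~\ref{bi}, and I would phrase the proof so as to make that parallel explicit.

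The only genuinely delicate point — and the step I expect to be the main obstacle — is pinning down that the ``finite reflection group'' in the statement of \cite{cdm2} really is the integral reflection subgroup $W_{x_\lambda}$ described above, and that its orbits on the relevant weight are unchanged if one enlarges it to all of $W$. In the integer-$\delta$ case this is immediate, but one must be slightly careful about the two components of $X$ (integer versus half-integer coordinates) and about the sign-changing generators $(ij)_-$: a reflection $(ij)_-$ fixes $x_\lambda$ only through its action on the block, and one should check that allowing it does not merge orbits that the finite group of \cite{cdm2} keeps separate. The resolution is that $(ij)_-$ acts on a pair of coordinates by $(a,b)\mapsto(-b,-a)$, which for our strictly decreasing $\rho_\delta$-tails produces a non-dominant rearrangement that must be re-sorted back into $A^+$ by an element of the place-permutation part; the net effect is still an orbit contained in $W\cdot x_\lambda$, so no separate orbits are merged beyond what \cite{cdm2} already allows. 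Once this identification is made, the rest of the argument is the short two-direction chase above.
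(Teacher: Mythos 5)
You have correctly identified the paper's (implicit) strategy: the theorem is stated with no proof beyond the assertion that it ``is easy to see'' from the main result of \cite{cdm2}, and your plan of deducing the $W$-orbit version from the finite reflection group version of that paper is exactly what is intended. The two-direction orbit chase at the end is also the right shape.

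However, the fleshing-out rests on a misidentification of what has to be checked. The finite reflection groups in \cite{cdm2} are not integral Weyl groups $W_{x_\lambda}$ cut out by an integrality condition on roots; they are finite-rank Weyl groups of type $D_N$ acting on the first $N$ coordinates (via the dot action relative to $\rho_\delta$, which is the linear action on the shifted weights $x_\lambda=\lambda+\rho_\delta$). Integrality is a non-issue: the statement only has content for $\delta\in\ZZ$ (otherwise $B_n(\delta)$ is semisimple), and then all coordinates of every $x_\lambda$ lie in a single coset of $\ZZ$, so your $W_{x_\lambda}$ is all of $W$ --- an \emph{infinite} group, which cannot be the ``finite reflection group'' of \cite{cdm2}. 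The real content of the reformulation, which your argument does not supply, is the passage from finite to infinite rank: one must check that the $W(D_N)$-orbit and the $W$-orbit of $x_\lambda$ have the same intersection with $\{x_\mu:\mu\in\Lambda_n\}$. This uses that every element of $W$ has finite support and that $x_\lambda$ and $x_\mu$ agree with the strictly decreasing sequence $\rho_\delta$ in all but finitely many coordinates, so any $w$ with $wx_\mu=x_\lambda$ may be replaced by one lying in $W(D_N)$ for suitable $N$; one then needs the stability of the block relation under globalisation (${\mathcal B}_n(\lambda)={\mathcal B}_{n+2}(\lambda)\cap\Lambda_n$) to descend to $B_n$ when $N>n$. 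Your worry about the generators $(ij)_-$ merging orbits is a red herring: $(ij)_-$ is the reflection in $\epsilon_i+\epsilon_j$ and already lies in the type $D$ group of \cite{cdm2}, so no new identifications arise --- except in the degenerate case of a zero coordinate (the $\Diamond$ of the paper), which your argument does not address and which is the one place where the sign-change generators genuinely need care.
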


To each $x\in X$ we wish to associate a diagram. This will have
vertices indexed by $\NN\cup\{0\}$ if
$x\in\prod_{i\in\NN}\ZZ\epsilon_i$ or by $\NN-\frac{1}{2}$ if
$x\in\prod_{i\in\NN}(\ZZ+\frac{1}{2})\epsilon_i$. Each vertex will be
labelled with one of the symbols $\circ$, $\times$, $\vee$, $\wedge$,
or $\Diamond$. Given $x\in A^+$ define
$$I_{\wedge}(x)=\{x_i: x_i>0\}\quad \text{and}\quad I_{\vee}(x)=\{x_i:
x_i<0\}.$$ We also set $I_{\Diamond}(x)=\{x_i:x_i=0\}$, so
$I_{\Diamond}(x)$ can consist of at most one element. Now vertex $n$ in
the diagram associated to $x$ is labelled by
\begin{equation}\label{reassign}
\left\{\begin{array}{cl}
\circ &\text{if}\ n\notin I_{\vee}(x)\cup I_{\wedge}(x)\\
\times&\text{if}\ n\in I_{\vee}(x)\cap I_{\wedge}(x)\\
\vee&\text{if}\ n\in I_{\vee}(x)\backslash I_{\wedge}(x)\\
\wedge&\text{if}\ n\in I_{\wedge}(x)\backslash
I_{\vee}(x)\\
\Diamond &\text{if}\ n\in I_{\Diamond}(x).\end{array}\right.
\end{equation}

Note that every element in $A^+$ is uniquely determined by its
diagram, and every such diagram corresponds to an element in $A^+$
(provided that $0$ is labelled by $\circ$ or $\Diamond$). For this
reason we will use the notation $x$ (or $x_{\lambda}$) for both.

\begin{example}
Let $\lambda=(4,3,2)$ and $\delta=1$. Then we have
$$\rho_{\delta}=(-\frac{1}{2},-\frac{3}{2},-\frac{5}{2},\ldots,)$$
and
$$x_{\lambda}=\lambda+\rho_{\delta}=(\frac{7}{2},\frac{3}{2},
-\frac{1}{2},-\frac{7}{2},-\frac{9}{2},\ldots,).$$
The corresponding diagram is shown in Figure \ref{curlx}.
\end{example}

\begin{figure}[ht]
\includegraphics{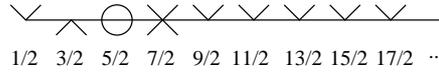}
\caption{The diagram associated to $\lambda=(4,3,2)$ when $\delta=1$.}
\label{curlx}
\end{figure}

\begin{rem}
It is easy to see that two elements in $A^+$ are in the same $W$-orbit
if and only if they are obtained from each other by repeatedly
swapping a $\vee$ and a $\wedge$ or replacing two $\vee$s by two
$\wedge$s, where $\Diamond$ can also play the role of either $\vee$ or
$\wedge$. If we fix a $\lambda$ where $x_{\lambda}$ contains
$\Diamond$ then we can arbitrarily choose to replace this $\Diamond$
by either $\vee$ or $\wedge$, and this defines a unique choice of
$\vee$ or $\wedge$ for every other element of the same
block. \emph{Thus in what follows we will always assume that a fixed
  choice of $\vee$ or $\wedge$ has been made for the symbol $\Diamond$
  for some weight in each block. Our combinatorial constructions will
  not be affected by this choice (provided we are consistent in a given
  block).}
\end{rem}

We define a partial order $\leq$ on $A^+$ by setting $x< y$ if $y$ is
obtained from $x$ by swapping a $\vee$ and a $\wedge$ so that the
$\wedge$ moves to the right, or if $y$ contains a pair of $\wedge$s
instead of a corresponding pair of $\vee$s in $x$, and extending by
transitivity. Note that for partitions $\lambda,\mu\in\Lambda_n$ we
have $x_{\lambda}\leq x_{\mu}$ if and only if $\lambda$ and $\mu$ are
in the same block and $\lambda\leq \mu$ (where this is the natural
order on partitions from Section \ref{basics}). Thus we use the same
symbol for both partial orders.

\begin{rem}
For a fixed partition $\lambda$ the diagram for $x_{\lambda}$ is
labelled by $\vee$ for all $n>>0$. Thus there are only finitely many
$x< x_{\lambda}$.
\end{rem}

To each $x_{\lambda}\in A^+$ we now associate a \emph{curl
  diagram} $c_{\lambda}$ in the following (recursive) fashion.

In $x_{\lambda}$ find a pair of vertices labelled $\vee$ and $\wedge$
in order from left to right that are neighbours in the sense that
there are only $\circ$s, $\times$s, or vertices already joined by caps
at an earlier stage between them. Join this pair of vertices together
with a cap. Repeat this process until there are no more such $\vee$
$\wedge$ pairs. (This will occur after a finite number of steps.)

Ignoring all $\circ$s, $\times$s and vertices on a cap, we are left
with a sequence of a finite number of $\wedge$s followed by an
infinite number of $\vee$s. Starting from the leftmost $\wedge$, join
each $\wedge$ to the next from the left which has not yet been used,
via a clockwise arc around all vertices to the left of the starting
vertex and without crossing any other arcs or caps. If there is a free
$\wedge$ remaining at the end of this procedure, draw an infinite ray
up from this vertex, and draw infinite rays from each of the remaining
$\vee$s. We will refer to the arcs connecting $\wedge$s as
\emph{curls}.

\begin{example}
An example of this construction is given in Figure \ref{exc}.
\end{example}

\begin{figure}[ht]
\includegraphics{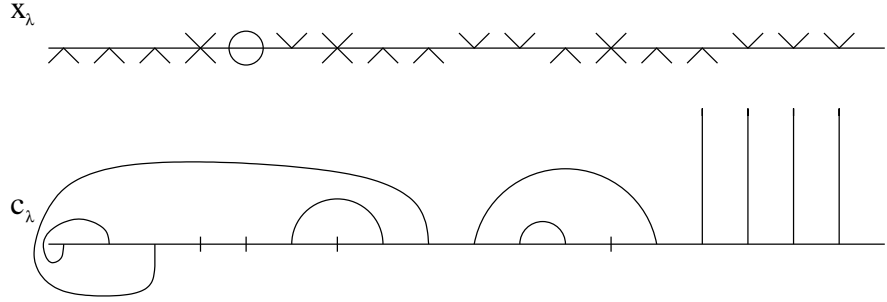}
\caption{An example of the curl diagram construction.}
\label{exc}
\end{figure}

To a curl diagram $c$ and an element $x_\lambda\in A^+$ we can
associate a \emph{labelled curl diagram} $cx_{\lambda}$ by writing each
label on a vertex of $x_{\lambda}$ underneath the corresponding vertex
of $c$. We call such a diagram an \emph{oriented curl diagram} if the
following conditions all hold:
\begin{enumerate}
\item each free vertex in $c$ is labelled by a $\circ$ or $\times$ in
  $x_{\lambda}$;
\item the vertices at the end of each cap in $c$ are labelled by
  exactly one $\wedge$ and one $\vee$ in $x_{\lambda}$;
\item the vertices at the end of each curl in $c$ are labelled by
  two $\wedge$s or two $\vee$s in $x_{\lambda}$;
\item each vertex at the bottom of a ray in $c$ is labelled by a
  $\wedge$ or $\vee$ in $x_{\lambda}$;
\item it is impossible to find two rays in $c$ whose vertices are
  labelled $\vee$ and $\wedge$, or $\wedge$ and $\wedge$, in order from
  left to right in $x_{\lambda}$.
\end{enumerate}

Each cap or curl in an oriented curl diagram has an orientation
induced by the terminal symbols (as though they were arrows). The
\emph{degree} $\deg(cx_{\lambda})$ of an oriented curl diagram
$cx_{\lambda}$ is the number of clockwise caps and curls that it contains.

\begin{rem}
Given a partition $\lambda$, all caps and curls in the labelled curl
diagram $c_{\lambda}x_{\lambda}$ are clearly oriented
anticlockwise. Thus the degree of $c_{\lambda}x_{\lambda}$ is $0$.
\end{rem}

For two partitions $\lambda$ and $\mu$ we define $d_{\lambda\mu}(q)$
to be $q^{\deg(c_{\lambda}x_{\mu})}$ if (i) $\lambda$ and $\mu$ are in
the same $W$-orbit, and (ii)
$c_{\lambda}x_{\mu}$ is an oriented curl diagram. We define
$d_{\lambda\mu}(q)$ to be $0$ otherwise. 

\begin{example}
Let $x_{\lambda}$ and $c_{\lambda}$ be as in Figure \ref{exc}. For
$x_{\mu}$ as illustrated in Figure \ref{degcurl} we see that
$c_{\lambda}x_{\mu}$ is an oriented curl diagram with
$\deg(c_{\lambda}x_{\mu})=2$. Hence we have that
$$d_{\lambda\mu}(q)=q^2.$$ 
\end{example}

\begin{figure}[ht]
\includegraphics{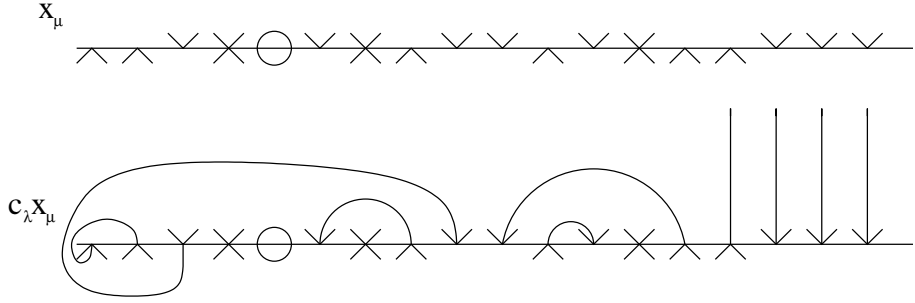}
\caption{An example of the calculation of the degree of a curl diagram.}
\label{degcurl}
\end{figure}

We are interested in determining the decomposition numbers for the
Brauer algebras (and hence recovering the result of Martin
\cite{marbrauer}). As noted in Section \ref{basics}
this is equivalent to determining the 
$$D_{\lambda\mu}=(P_{n}(\lambda):\Delta_{n}(\mu)).$$
Our eventual aim is to show 

\begin{thm} \label{mainresult2}
Given $\lambda$ and $\mu$ in $\Lambda_{n}$ we have
$$D_{\lambda\mu}=d_{\lambda\mu}(1).$$
\end{thm}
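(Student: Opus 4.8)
The plan is to prove Theorem~\ref{mainresult2} (and Theorem~\ref{mainresult} simultaneously, since the combinatorics has been set up in parallel) by reducing the computation of $D_{\lambda\mu}$ to a base case via the translation and localisation machinery of Section~\ref{tor}. First I would observe that, by Brauer--Humphreys reciprocity, $D_{\lambda\mu}=[\Delta_{(a)}(\mu):L_{(a)}(\lambda)]$, and that both sides are independent of the rank $(a)$ provided $\lambda,\mu\in\Lambda_{(a)}$; so it suffices to work in a block $\mathcal B(\lambda^+)$ for $(a)$ large. The key structural input is the reflection geometry controlling the blocks (Theorems in Sections~\ref{capsec}--\ref{twicapsec}): a block corresponds to a $W$-orbit, and inside such an orbit the relevant poset is exactly the poset of diagrams obtained by swapping $\vee\wedge$ pairs (and $\vee\vee\leftrightarrow\wedge\wedge$ pairs in the Brauer case). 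I would stratify the argument on the number of caps/curls in $c_{\lambda^+}$, or equivalently induct on the "distance" within the block.

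The engine of the induction is the dichotomy ``translation equivalence'' versus ``separation'' from Section~\ref{tor}. The idea is: given a block $\mathcal B(\lambda^+)$, identify a weight $\lambda'$ in its lower closure (in the sense defined before Proposition~\ref{lc}) obtained by ``collapsing'' the rightmost cap or curl of $c_{\lambda^+}$, so that $\mathcal B(\lambda')$ is a strictly smaller block (fewer $\wedge$s, or a cap removed). One then checks that $\mathcal B(\lambda^+)$ has enough local homomorphisms with respect to $\mathcal B(\lambda')$ --- this is where \cite[Theorem 3.4]{dhw} (Brauer) and \cite[Theorem 6.2]{cddm} (walled Brauer) enter, as flagged in the Remark after Proposition~\ref{lc}. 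With that in hand, Theorems~\ref{equiv}, \ref{walls}, \ref{local} and Proposition~\ref{lc} give: (a) for weights $\mu$ not involved in a ``separated pair'' across the wall between $\lambda'$ and $\lambda^+$, the decomposition numbers $[\Delta(\mu):L(\tau)]$ in $\mathcal B(\lambda^+)$ match those of the translated weights in $\mathcal B(\lambda')$; (b) for a separated pair $\mu^-<\mu^+$, the extra contributions are governed by the nonsplit extension $\ind^{\mu^-}\Delta(\mu')$ of $\Delta(\mu^-)$ by $\Delta(\mu^+)$ with simple head $L(\mu^+)$ from Theorem~\ref{local}(ii). Translating these module-theoretic statements through the embedding $\lambda\mapsto x_\lambda$, the claim becomes a purely combinatorial recursion on labelled cap/curl diagrams: I must show that $d_{\lambda\mu}(1)$ satisfies exactly the same recursion --- that collapsing the rightmost cap/curl of $c_{\lambda^+}$ either leaves the degree of $c_\lambda x_\mu$ unchanged (case (a)) or accounts for it by a single added clockwise cap/curl (case (b)). This is a finite, local check on how the cap-diagram construction interacts with swapping a single $\vee\wedge$ (or $\vee\vee$) pair.

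Concretely the key steps, in order, are: (1) set up the reduction $D_{\lambda\mu}=[\Delta_{(a)}(\mu):L_{(a)}(\lambda)]$ and fix a large block $\mathcal B(\lambda^+)$; (2) show that every nontrivial block admits a weight $\lambda'$ in its lower closure obtained by the appropriate diagrammatic ``collapse'' of $c_{\lambda^+}$, and that $\mathcal B(\lambda^+)$ has enough local homomorphisms with respect to $\mathcal B(\lambda')$ (citing \cite{dhw} and \cite{cddm}); (3) use Theorems~\ref{equiv}, \ref{walls}, \ref{local}, and Proposition~\ref{lc} to express each $P_{(a)}(\lambda)$ in $\mathcal B(\lambda^+)$ as (an induction/restriction of) a projective from $\mathcal B(\lambda')$, thereby computing its $\Delta$-filtration multiplicities in terms of the smaller block plus a correction term coming from separated pairs; (4) verify that the combinatorial quantities $d_{\lambda\mu}(1)$ obey the identical recursion under the diagrammatic collapse, so that by induction on block size (with base case a semisimple or one-cap block, where $D_{\lambda\mu}=\delta_{\lambda\mu}=d_{\lambda\mu}(1)$) the two agree.

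The main obstacle I expect is step~(4) combined with the bookkeeping in step~(2)--(3): verifying that the module-theoretic recursion provided by the tower-of-recollement translation functors matches, termwise and with the correct powers of $q$ (at $q=1$), the recursion governing oriented cap/curl diagrams. In particular one must be careful that ``collapsing the rightmost cap/curl'' always produces a weight in the \emph{lower closure} (not merely a smaller block), that the separated pairs are exactly the pairs $\mu^-,\mu^+$ whose diagrams differ by reorienting the cap/curl being collapsed, and that no spurious decomposition multiplicities appear --- i.e. that the induced projective has \emph{exactly} the predicted $\Delta$-factors and no others. Since the whole point is to get a \emph{uniform} Brauer/walled-Brauer proof, I would phrase steps (2)--(4) entirely in the notation $(a),(a\pm1),\Lambda_{(a)},\Delta_{(a)}$ and only at the very end remark that in the cap-diagram (walled) case the $\Diamond$ vertex and the $\vee\vee\leftrightarrow\wedge\wedge$ moves do not occur, so the curl recursion degenerates to the cap recursion. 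The finer point that $d_{\lambda\mu}(q)$ (not just its value at $1$) is the correct graded refinement will be deferred to the later sections on projective resolutions; here only $q=1$ is needed.
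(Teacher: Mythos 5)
Your overall strategy is the paper's: reduce the computation to a recursion via the translation-functor formalism of Section \ref{tor}, then check that $d_{\lambda\mu}(1)$ satisfies the same recursion. But there is a genuine gap in the inductive scheme, located exactly at the step you flag as your expected obstacle. ``Collapsing the rightmost cap or curl of $c_{\lambda^+}$'' does \emph{not} in general produce a weight in the lower closure of $\lambda^+$: the lower-closure condition is phrased via $\supp$, i.e.\ via adding or removing a single box, so $\lambda'$ can only qualify if $x_{\lambda'}$ differs from $x_{\lambda}$ by moving a single symbol one step. If the cap or curl you want to collapse has its endpoints separated by intervening $\circ$s and $\times$s, the collapsed weight is not in $\supp(\lambda)$ at all and no separation statement is available. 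The paper's resolution is a different induction: it chooses a \emph{small} (innermost) cap, or the curl at the left-most $\wedge$, and as long as its endpoints are not adjacent it moves one endpoint a single step left past a $\circ$ or $\times$ (cases (i), (ii), (iv)--(vii), (viii)(a) of Figures \ref{capcase} and \ref{curlcase}); each such step is a translation \emph{equivalence} between $\lambda$ and $\lambda'=\lambda-\epsilon_i$, handled by Theorem \ref{equiv}, and it changes neither the block combinatorics nor the number of caps/curls --- so your proposed induction on ``block size'' or ``number of caps/curls'' would not register progress and would not terminate. The paper instead inducts on $|\lambda|$, with base case ``$x_\lambda$ minimal in its block''. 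Only when the endpoints become adjacent (case (iii)), or the curl sits at $0,1$ (case (viii)(b)), is $\lambda-\epsilon_i$ genuinely in the lower closure with a separated pair $\lambda^{\pm}$; there Proposition \ref{lc} and adjunction give $D_{\lambda\mu}=\dim\Hom(P_{(a-1)}(\lambda'),\res^{\lambda'}_{(a)}\Delta_{(a)}(\mu))$, which vanishes unless $\mu=\mu^{\pm}$ and otherwise equals $D_{\lambda'\mu'}$.

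A secondary inaccuracy: in the paper's lower-closure step there are no weights of your type (a) --- every $\mu'\in{\mathcal B}(\lambda')$ has exactly two preimages $\mu^{\pm}$, and the $\mu\in{\mathcal B}(\lambda)$ not of this form (both relevant vertices $\wedge\wedge$ or $\vee\vee$) satisfy $D_{\lambda\mu}=0=d_{\lambda\mu}(1)$ because $\res^{\lambda'}_{(a)}\Delta_{(a)}(\mu)=0$. The dichotomy ``translation equivalence versus separation'' is a case split on the configuration of the chosen small cap/curl (i.e.\ on which $\lambda'$ one passes to), not a partition of the weights $\mu$ within one fixed comparison. Once the induction is restructured as above, the remaining ingredients you list --- enough local homomorphisms from \cite{dhw} and \cite{cddm}, Proposition \ref{lc} to identify $P_{(a)}(\lambda^+)$ as an induced projective, and the local check that $d_{\lambda\mu}$ is insensitive to moving a cap/curl endpoint past a $\circ$ or $\times$ --- are exactly those of the paper's proof.
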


\section{Decomposition numbers from oriented cap and curl diagrams}
\label{repthry}

The aim of this section is to prove Theorems \ref{mainresult} and
\ref{mainresult2}. To do this we will apply the translation functor
formalism from Section \ref{tor}. We will consider the two cases
simultaneously as they are very similar.

Fix $\lambda\in\Lambda_{r,s}$ or $\Lambda_n$. We will proceed by
induction on the partial order $\leq$ introduced in Section
\ref{capsec} or \ref{twicapsec}. If $x_\lambda$ is minimal in its
block with respect to the order $\leq$ then we have
$$D_{\lambda\mu}=\delta_{\lambda\mu}=d_{\lambda\mu}(1)$$
for all $\mu$ and so we are done.

Suppose that $x_{\lambda}$ is not minimal in its block. We proceed by
induction on $|\lambda|$. (Note that if $\lambda^L=\emptyset$ or
$\lambda^R=\emptyset$ then
$x_{\lambda}=\rho$ is minimal.) Then $x_{\lambda}c_{\lambda}$ contains
at least one cap or curl.

First consider the cap case: we may choose the cap so that it does not
contain any smaller caps (and hence all vertices inside the cap are
labelled by $\times$ or $\circ$ only). We call such a cap a
\emph{small} cap. There are three cases, which
are illustrated in Figure \ref{capcase}. Note that we will henceforth
abuse notation and write $\lambda$ instead of $x_{\lambda}$. 

\begin{figure}[ht]
\includegraphics{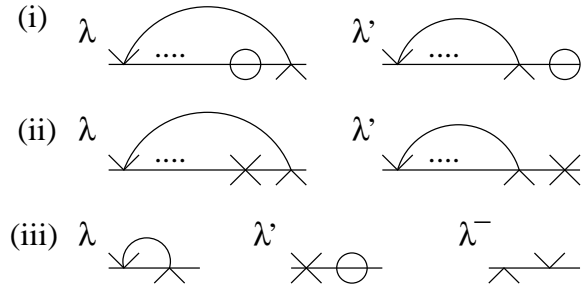}
\caption{The three possible small cap configurations}
\label{capcase}
\end{figure}

\noindent{\bf Case (i):} The vertex at the point marked with a
$\wedge$ is of the form $x_i$ for some $i\in\ZZo$, and (in the walled
Brauer case) by the definition of $\wedge$ we must have $i>0$. Now
consider $\lambda'=(\lambda^l,\lambda^R-\epsilon_i)$ or
$\lambda'=\lambda-\epsilon_i$. Note that $x_i-1$ is not an entry in
$x_{\lambda}$ and hence $\lambda'$ is a (bi)partition.  The diagram
associated to $\lambda'$ is illustrated on the right-hand side of
Figure \ref{capcase}(i).

We claim that $\lambda$ and $\lambda'$ are translation equivalent;
that is for every $\mu\in{\mathcal B}(\lambda)$ there exists a unique
$\mu'\in{\mathcal B}(\lambda')\cap\supp(\mu)$ and for every
$\mu'\in{\mathcal B}(\lambda')$ there is a unique $\mu\in{\mathcal
  B}(\lambda)\cap\supp(\mu')$. Indeed, it is easy to see that the
only places where $x_{\mu}$ and $x_{\mu'}$ can differ are at the
vertices labelled $x_i$ and $x_{i}-1$, and the possible cases are
illustrated in Figure \ref{trani}.

\begin{figure}[ht]
\includegraphics{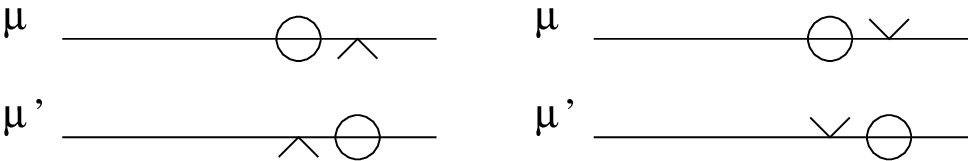}
\caption{The possible diagrams for $x_{\mu}$ and $x_{\mu'}$ in case (i)}
\label{trani}
\end{figure}

Recall our labelling convention involving $(a)$ from Section
\ref{basics}. By Theorem \ref{equiv} and the inductive hypothesis we
have that
\begin{equation}\label{this}
\begin{array}{lll}D_{\lambda\mu}&=&[\Delta_{(a)}(\mu):L_{(a)}(\lambda)]\\
&=&
  [\Delta_{(a-1)}(\mu'):L_{(a-1)}(\lambda')]=
D_{\lambda'\mu'}=d_{\lambda'\mu'}(1).
\end{array}\end{equation}
But if we ignore the $\times$s and $\circ$s (which play no role other
than as place markers in the
definition of $d_{\lambda\mu}$) then the cap or curl diagrams $c_{\lambda}$
and $c_{\lambda'}$ are identical, and hence 
\begin{equation}\label{that}
d_{\lambda'\mu'}(1)=d_{\lambda\mu}(1).
\end{equation}
Combining (\ref{this}) and (\ref{that}) we see that
$D_{\lambda\mu}=d_{\lambda\mu}(1)$ as required.

\noindent{\bf Case (ii):} This is very similar to case (i). The vertex
at the point marked with a $\times$ is in the walled Brauer case of
the form $x_{-j}$ for some $j\in\ZZo$, and by the definition of
$\times$ we can take $j>0$. In the Brauer case this vertex is of the
form $x_i>0$ and $x_i$ and $-x_{i}$ both appear in $x_{\lambda}$, and
we choose $j$ so that $x~_j=-x_i$.

Now consider $\lambda'=(\lambda^L-\epsilon_j,\lambda^R)$ or
$\lambda'=\lambda-\epsilon_j$. (As before it is easy to verify that
$\lambda'$ is a (bi)partition.) The diagram associated to $\lambda'$ is
illustrated on the right-hand side of Figure \ref{capcase}(ii). As in
case (i) the weights $\lambda$ and $\lambda'$ are translation
equivalent, where the various possibilities for $x_{\mu}$ and
$x_{\mu'}$ as before are shown in Figure \ref{tranii}.

\begin{figure}[ht]
\includegraphics{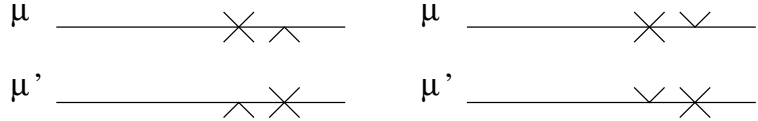}
\caption{The possible diagrams for $x_{\mu}$ and $x_{\mu'}$ in case (ii)}
\label{tranii}
\end{figure}

The rest of the argument proceeds exactly as in case (i).

\noindent{\bf Case (iii):} The vertex at the point marked with a
$\wedge$ is of the form $x_{i}$ for some $i\in\ZZo$, and (in the
walled Brauer case) by the definition of $\wedge$ we must have
$i>0$. Now consider $\lambda'=(\lambda^L,\lambda^R-\epsilon_i)$ or
$\lambda'=\lambda-\epsilon_i$ (which as before is a (bi)partition),
and set $\lambda^+=\lambda$. Note that there is another element
$\lambda^-\in{\mathcal B}(\lambda^+)\cap\supp(\lambda')$; the three
diagrams associated to $\lambda^+$, $\lambda'$ and $\lambda^-$ are
illustrated in Figure \ref{capcase}(iii).

Moreover, for each $\mu'\in{\mathcal B}(\lambda')$ there are exactly
two elements $\mu^+$ and $\mu^-$ in ${\mathcal
  B}(\lambda)\cap\supp(\mu')$ (which correspond to the same three
configurations as for $\lambda^+$, $\lambda^-$, and $\lambda'$ at the
two points $x_i$ and $x_i-1$). Also, $\mu'$ is the unique element in
${\mathcal B}(\lambda')\cap\supp(\mu^{\pm})$. Thus $\lambda'$ is in
the lower closure of $\lambda^+$.

For $\mu\in{\mathcal B}(\lambda)$ we have
$$\begin{array}{lcl}
D_{\lambda\mu}&=&[\Delta_{(a)}(\mu):L_{(a)}(\lambda)]\\
&=&\dim\Hom(P_{(a)}(\lambda^+),\Delta_{(a)}(\mu))\\
&=&\dim\Hom(\ind_{(a-1)}^{\lambda}P_{(a-1)}(\lambda'),\Delta_{(a)}(\mu))\\
&=&\dim\Hom(P_{(a-1)}(\lambda'),\res_{(a)}^{\lambda'}\Delta_{(a)}(\mu))
\end{array}$$
where the third equality follows from Proposition \ref{lc}. Now
$\res_{(a)}^{\lambda'}\Delta_{(a)}(\mu)\neq 0$ implies that
$\mu=\mu^{\pm}$ with $\mu'\in{\mathcal B}(\lambda')\cap\supp(\mu^{\pm})$
and so
$D_{\lambda\mu}=0$ unless $\mu=\mu^{\pm}\in{\mathcal
  B}(\lambda)\cap\supp(\mu')$. Note that for any $\mu$ not of this
form in $\mathcal B(\lambda)$ the two vertices labelled $x_i$ and
$x_i-1$ must be either both $\wedge$s or both $\vee$s, which implies
that $d_{\lambda\mu}=0$.

If $\mu=\mu^{\pm}$ as above then
$$D_{\lambda\mu^{\pm}}=
\dim\Hom(P_{(a-1)}(\lambda'),\Delta_{(a-1)}(\mu'))
=D_{\lambda'\mu'}=d_{\lambda'\mu'}(1)$$
by the induction hypothesis. Finally, note that $c_{\lambda'}x_{\mu'}$
is an oriented cap diagram if and only if $c_{\lambda}x_{\mu^{\pm}}$
is an oriented cap diagram, and so
$$D_{\lambda\mu^{\pm}}=d_{\lambda\mu^{\pm}}(1)$$
as required.

This completes the proof for the walled Brauer algebra. However, for
the Brauer algebra the diagram $c_{\lambda}$ may contain only
curls. We pick the one involving the left-most $\wedge$, and 
there are five cases, which are illustrated in Figure
\ref{curlcase}.

\begin{figure}[ht]
\includegraphics{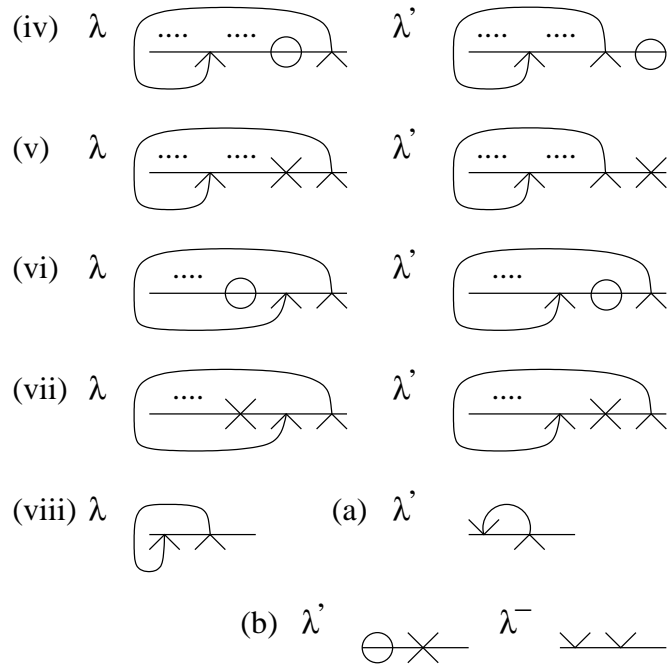}
\caption{The five possible small curl configurations}
\label{curlcase}
\end{figure}

\noindent{\bf Cases (iv-vii):} These are very similar to cases (i) and
(ii) above. In each case $\lambda'$ is obtained from $\lambda$ by
swapping the leftmost or right-most end of the curl with the symbol
immediately to its left (either $\circ$ or $\times$). Arguing exactly
as in cases (i) and (ii) we see that $\lambda$ and $\lambda'$ are translation
equivalent, and satisfy
$$d_{\lambda\mu}(q)=d_{\lambda'\mu'}(q).$$
Thus the result follows by induction.

\noindent{\bf Case (viii):} We are left with the case where the curl is
labelled with (a) $\frac{1}{2}$ and $\frac{3}{2}$, or (b) $0$ and $1$.

First consider configuration (a), with $\frac{1}{2}$ in the $i$th
entry of $x_{\lambda}$. As $-\frac{1}{2}$ is not in $x_{\lambda}$ we
have that $\lambda'=\lambda-\epsilon_i$ is a partition. The
corresponding diagrams are illustrated in Figure
\ref{curlcase}(viii)(a). These two elements are translation
equivalent, and the result follows as in case (i).
 
Finally consider configuration (b), and suppose that $0$ is in the
$i$th entry of $x_{\lambda}$. As $-1$ is not in $x_{\lambda}$, we have
that $\lambda'=\lambda-\epsilon_i$ is a partition. Setting
$\lambda^+=\lambda$ we see by arguing as in case (iii) that $\lambda'$
is in the lower closure of $\lambda^+$ (with $\lambda^-$ as
illustrated in Figure \ref{curlcase}(viii)(b)). The result for this
case follows just as in case (iii).

\begin{rem} We have shown that 
$$D_{\lambda\mu}=d_{\lambda\mu}(1)$$ 
for both the Brauer and walled Brauer algebras. In the Brauer case
Martin \cite{marbrauer} has introduced a similar diagram calculus, but
omitting the labels marked with $\times$ or $\circ$ and using caps
instead of curls. This allowed him to define versions of the
$d_{\lambda\mu}(q)$ and determine the decomposition numbers.

However, the $d_{\lambda\mu}(q)$ encode more than just their values at
$q=1$, and we would like to have a representation-theoretic
interpretation of these as polynomials in $q$. Instead we shall define
some closely related polynomials $p_{\lambda\mu}(q)$ and show how
these can be related to projective resolutions for our algebras. The
definition of this second family of polynomials crucially depends on
the distinction between caps and curls in our construction of curl diagrams.
\end{rem}

Before defining our second family of polynomials, we consider the
relation of the $d_{\lambda\mu}(q)$ to certain Kazhdan-Lusztig polynomials.

\section{A recursive formula for decomposition numbers}\label{recur}

We will show how the polynomials $d_{\lambda\mu}(q)$ can be calculated
recursively. The Brauer and walled Brauer cases will be considered
simultaneously. We will then relate this to the conjectured recursive
formula for the Brauer algebra given in \cite{cdm3} (and proved in
\cite{marbrauer}).

\begin{prop}\label{recp}
(i) Let $\lambda'\in\supp(\lambda)$ be as in one of the cases in
  Figure \ref{capcase} or \ref{curlcase}, with $\lambda$ and
  $\lambda'$ translation equivalent. Then
$$d_{\lambda'\mu'}(q)=d_{\lambda\mu}(q).$$ 
(ii) Suppose that $\lambda$ contains a small cap as in Figure
  \ref{capcase}(iii), or a small curl as in Figure
  \ref{curlcase}(viii) with $0$ in $x_{\lambda}$. Denote $\lambda$ by
  $\lambda^+$ and let $\lambda'$ and $\lambda^-$ be as indicated in
  the corresponding Figure. Then
$$d_{\lambda^+\mu^+}(q)=d_{\lambda'\mu'}(q)$$
and 
$$d_{\lambda^+\mu^-}(q)=qd_{\lambda'\mu'}(q).$$
Also we have
\begin{equation}\label{s1}
d_{\lambda^+\mu^+}(q)=q^{-1}d_{\lambda^-\mu^+}(q)+d_{\lambda^-\mu^-}(q)
\end{equation}
and
\begin{equation}\label{s2}
d_{\lambda^+\mu^-}(q)=qd_{\lambda^-\mu^-}(q)+d_{\lambda^-\mu^+}(q).
\end{equation}
\end{prop}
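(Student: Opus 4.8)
The plan is to prove everything by purely combinatorial manipulation of the cap/curl diagrams, with no further representation theory needed beyond what produced the statement. Part (i) is essentially already observed in Section~\ref{repthry}: in each of the configurations in Figure~\ref{capcase}(i)--(ii) or Figure~\ref{curlcase}(iv)--(vii) (and case (viii)(a)), the weight $\lambda'$ is obtained from $\lambda$ by exchanging one endpoint of a cap or curl with an adjacent $\circ$ or $\times$. Since the $\times$s and $\circ$s are inert place-markers in the definition of $d_{\lambda\mu}(q)$, the cap diagrams $c_\lambda$ and $c_{\lambda'}$ agree after deleting those symbols, and the bijection $\mu\leftrightarrow\mu'$ (place-swapping at the relevant pair of vertices) carries oriented labelled diagrams to oriented labelled diagrams of the same degree. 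First I would spell this out: record that $c_\lambda$ and $c_{\lambda'}$ are "the same diagram" once inert symbols are ignored, observe that $c_\lambda x_\mu$ is oriented iff $c_{\lambda'}x_{\mu'}$ is, and note that the set of clockwise caps/curls is literally the same in both, giving $d_{\lambda'\mu'}(q)=d_{\lambda\mu}(q)$.

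For part (ii), I would first pin down the local picture. In configuration Figure~\ref{capcase}(iii) (resp.\ Figure~\ref{curlcase}(viii) with $0$ present), $c_{\lambda^+}=c_{\lambda^-}=c_{\lambda'}$ as diagrams, the cap/curl $\gamma$ in question joins the two vertices $v<w$ in question, and the three weights $\lambda^+,\lambda',\lambda^-$ differ only in the labels on $\{v,w\}$: $\lambda^+$ has $(\vee,\wedge)$ there, $\lambda'$ has $(\times,\circ)$ or $(\circ,\times)$ (the label making $\gamma$ a free pair that closes off), and $\lambda^-$ has $(\wedge,\vee)$, so $\gamma$ is anticlockwise for $\lambda^+$ and clockwise for $\lambda^-$. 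Exactly as in Section~\ref{repthry}, for any $\mu'\in\mathcal B(\lambda')$ there are precisely two weights $\mu^\pm\in\mathcal B(\lambda^+)\cap\supp(\mu')$, agreeing with $\mu'$ away from $\{v,w\}$ and carrying the labels $(\vee,\wedge)$ resp.\ $(\wedge,\vee)$ on $\{v,w\}$. Then $c_{\lambda'}x_{\mu'}$ is oriented iff $c_{\lambda^+}x_{\mu^\pm}$ is (conditions (1)--(5) at vertices other than $v,w$ are identical; at $v,w$ the cap $\gamma$ is correctly oriented in all three cases). Since $\gamma$ is anticlockwise in $c_{\lambda^+}x_{\mu^+}$ and all other caps/curls match those in $c_{\lambda'}x_{\mu'}$, we get $d_{\lambda^+\mu^+}(q)=d_{\lambda'\mu'}(q)$; since $\gamma$ is clockwise in $c_{\lambda^+}x_{\mu^-}$, contributing one extra clockwise cap, we get $d_{\lambda^+\mu^-}(q)=q\,d_{\lambda'\mu'}(q)$.

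It remains to prove the two identities \eqref{s1} and \eqref{s2}, which relate diagrams built from $c_{\lambda^-}$ to those built from $c_{\lambda^+}$. Here the subtlety is that $c_{\lambda^-}\neq c_{\lambda^+}$ in general: since $\lambda^-$ has $\wedge,\vee$ on $\{v,w\}$ rather than $\vee,\wedge$, the recursive cap-building procedure pairs these two vertices differently — $v$ (now a $\wedge$) joins some $\vee$ to its left, and $w$ (now a $\vee$) joins some $\wedge$ to its right, rather than $v,w$ joining each other. The main obstacle is to track this "re-pairing'' precisely: I would argue that $c_{\lambda^-}$ is obtained from $c_{\lambda^+}$ by the standard local surgery on a single cap (the analogue of the elementary move in Lascoux--Sch\"utzenberger/Boe and in \cite{bs1}), so that oriented labellings of $c_{\lambda^-}$ by weights $\ge$ something and oriented labellings of $c_{\lambda^+}$ correspond, with the degree changing by a controlled amount. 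Concretely, for fixed target $\mu^+$, I would classify the local configurations of $\mu^+$ at the two "new'' cap endpoints governed by $c_{\lambda^-}$, split into the cases where $c_{\lambda^-}x_{\mu^+}$ is/is not oriented and where the relevant cap is clockwise/anticlockwise, and check that summing the contributions reproduces $d_{\lambda^+\mu^+}(q)$ with the stated $q^{\pm1}$ weights; \eqref{s2} is handled the same way with $\mu^-$ in place of $\mu^+$. An efficient alternative, which I would actually use to shorten the write-up, is to \emph{derive} \eqref{s1}--\eqref{s2} formally from the two already-established identities $d_{\lambda^+\mu^+}=d_{\lambda'\mu'}$, $d_{\lambda^+\mu^-}=q\,d_{\lambda'\mu'}$ together with their analogues for $\lambda^-$: since $\lambda'$ also sits in the lower closure of (the weight playing the role of) a $\lambda^+$ for the configuration attached to $\lambda^-$, one gets parallel relations expressing $d_{\lambda^-\mu^+}$ and $d_{\lambda^-\mu^-}$ in terms of $d_{\lambda'\mu'}$ up to a power of $q$, and \eqref{s1}--\eqref{s2} then follow by elementary substitution. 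Either way, the heart of the matter is the bookkeeping of the single-cap surgery relating $c_{\lambda^-}$ to $c_{\lambda^+}$, and everything else is a matter of checking the orientation conditions (1)--(5) vertex by vertex.
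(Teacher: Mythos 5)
Your part (i) and the first two identities of part (ii) are fine and match the paper's view that these are immediate from the construction (note, though, the internal slip: you first assert $c_{\lambda^+}=c_{\lambda^-}=c_{\lambda'}$ and only later, correctly, retract this for $\lambda^-$). The genuine gap is in your treatment of (\ref{s1}) and (\ref{s2}), which is the only non-obvious part of the proposition. The ``efficient alternative'' you say you would actually use does not work: there are no uniform ``parallel relations'' expressing $d_{\lambda^-\mu^+}(q)$ and $d_{\lambda^-\mu^-}(q)$ as fixed powers of $q$ times $d_{\lambda'\mu'}(q)$. The diagram $c_{\lambda^-}$ is not obtained from $c_{\lambda'}$ (or from $c_{\lambda^+}$) by a modification supported on the two vertices of the small cap/curl: the $\wedge$ at the left vertex and the $\vee$ at the right vertex of $\lambda^-$ get re-paired with \emph{other} vertices (a surrounding cap, a curl, rays, or the leftmost non-trivial vertex), and whether $d_{\lambda^-\mu^{\pm}}$ vanishes or not depends on the labels of $\mu$ at those re-paired vertices, not only at the two local ones. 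Concretely, in the paper's analysis one configuration of $\mu$ gives $d_{\lambda^-\mu^+}=0$ and $d_{\lambda^-\mu^-}=d_{\lambda^+\mu^+}$, while another gives $d_{\lambda^-\mu^+}=q\,d_{\lambda^+\mu^+}$ and $d_{\lambda^-\mu^-}=0$; both satisfy (\ref{s1})--(\ref{s2}), but neither is deducible from $d_{\lambda^+\mu^{\pm}}=q^{0,1}d_{\lambda'\mu'}$ by ``elementary substitution.'' Moreover the $\lambda'$ attached to a small cap/curl of $\lambda^-$ itself is a different weight from the $\lambda'$ separating $\lambda^{\pm}$, so those relations cannot be combined in the way you suggest.

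Your first route is the correct one and is exactly what the paper does: one must enumerate how the two ends of the small cap/curl get re-paired in $c_{\lambda^-}$ (four cap configurations and three curl configurations, Figures \ref{four} and \ref{free}), and in each case list the possible local labellings of $\mu^{\pm}$ and verify (\ref{s1})--(\ref{s2}) directly. In your write-up this entire case analysis is deferred to ``bookkeeping of the single-cap surgery,'' and the phrase ``standard local surgery on a single cap'' underestimates it --- the re-pairing is not local and its shape (cap versus curl versus leftmost vertex) is precisely what the seven cases distinguish. So either carry out that enumeration explicitly, or drop the shortcut; as written, the proposal leaves unproved exactly the part of the proposition that requires an argument.
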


\begin{proof}
Everything is obvious by construction except for (\ref{s1}) and
(\ref{s2}). There are seven cases, which are illustrated in the cap
case in Figure \ref{four} and in the curl case in Figure \ref{free}.

\begin{figure}[ht]
\includegraphics{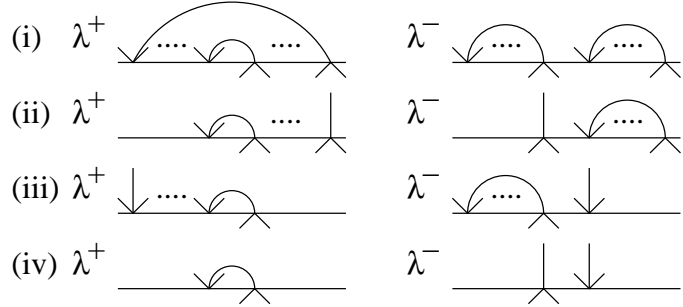}
\caption{Four small cap configurations}
\label{four}
\end{figure}

\begin{figure}[ht]
\includegraphics{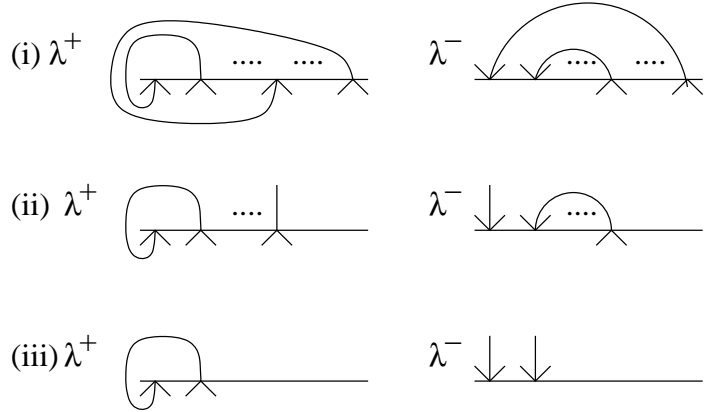}
\caption{Three small curl configurations}
\label{free}
\end{figure}

All of the cases are very similar, so we will consider just the case
in Figure \ref{four}(i). The weights $\lambda^+$ and $\lambda^-$ are
illustrated in Figure \ref{taster} together with the two possible
configurations (a) and (b) for $\mu^+$ and $\mu^-$ in the same block as
$\lambda^+$ and $\lambda^-$ at the four marked vertices. (The elements
$\mu$ and $\mu'$ must agree at all of the vertices not indicated in
the diagram.)

\begin{figure}[ht]
\includegraphics{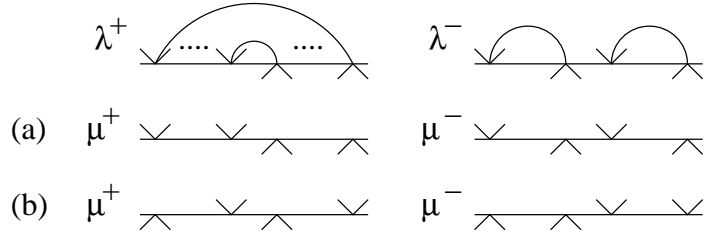}
\caption{The first cap case}
\label{taster}
\end{figure}

If $\mu^+$ is as in configuration (a) then we have
\begin{eqnarray*}
d_{\lambda^+\mu^-}(q)&=&qd_{\lambda^+\mu^+}(q)\\
d_{\lambda^-\mu^-}(q)&=&d_{\lambda^+\mu^+}(q)\\
d_{\lambda^-\mu^+}(q)&=&0
\end{eqnarray*}
which implies (\ref{s1}) and (\ref{s2}) as required.

If $\mu^+$ is as in configuration (b) then we have
\begin{eqnarray*}
d_{\lambda^+\mu^-}(q)&=&qd_{\lambda^+\mu^+}(q)\\
d_{\lambda^-\mu^-}(q)&=&0\\
d_{\lambda^-\mu^+}(q)&=&qd_{\lambda^+\mu^+}(q)
\end{eqnarray*}
which implies (\ref{s1}) and (\ref{s2}) as required. Similar arguments
hold in the remaining cases.
\end{proof}

Suppose that $\lambda$ is a regular weight, i.e. there are no vertices
labelled $\times$ in its diagram. This corresponds to
$\lambda$ lying in an alcove in the language of \cite{cddm} and
\cite{cdm2}. In \cite{cdm3} we reviewed the recursive formula for
parabolic Kazhdan-Lusztig polynomials of type $(D,A)$ following
\cite{soergel1} and conjectured that this gave the decomposition
numbers for the Brauer algebra. This algorithm is in two stages,
corresponding to translating the original polynomial and then
subtracting lower order terms.  This conjecture was proved by Martin
in \cite{marbrauer}. Exactly the same construction and conjecture can
be made for the walled Brauer case, involving parabolic
Kazhdan-Lusztig polynomials of type $(A,A\times A)$.

\begin{cor}
The decomposition numbers for the Brauer and walled Brauer algebras in
the case of regular blocks can be calculated (as parabolic
Kazhdan-Lusztig polynomials) as in \cite{cdm3}.
\end{cor}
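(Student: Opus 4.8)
The plan is to identify, for regular weights, the polynomial $d_{\lambda\mu}(q)$ with the parabolic Kazhdan--Lusztig polynomial of type $(D_n,A_{n-1})$ in the Brauer case and of type $(A_n,A_{r-1}\times A_{n-r})$ in the walled Brauer case, and then to read off the statement about decomposition numbers from Theorems \ref{mainresult} and \ref{mainresult2}, which already give $D_{\lambda\mu}=d_{\lambda\mu}(1)$. The algorithm recalled in \cite{cdm3} computes these parabolic Kazhdan--Lusztig polynomials by a recursion in two stages --- a translation stage (multiplication by a Kazhdan--Lusztig generator $\theta_s$, realised geometrically by translating a weight lying on a wall into an adjacent facet), followed by a stage subtracting lower order terms --- starting from the identity matrix at a minimal weight. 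Since this recursion determines the polynomials uniquely, it suffices to verify that the $d_{\lambda\mu}(q)$ satisfy it.

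The first thing I would check is that the recursion of Section \ref{repthry}, once begun at a regular weight, never leaves the regular locus, so that it really is a recursion among regular weights as in \cite{cdm3}. This is immediate from the configurations in Figures \ref{capcase} and \ref{curlcase}: in each case the passage between $\lambda$ (or $\lambda^{\pm}$) and $\lambda'$ either swaps a $\wedge$ or a $\vee$ with an adjacent $\circ$, or alters the two labels at a pair of $\wedge$/$\vee$ vertices, and in neither situation is a $\times$ created.

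Next I would match Proposition \ref{recp} with the two stages of the \cite{cdm3} algorithm. Part (i), namely $d_{\lambda'\mu'}(q)=d_{\lambda\mu}(q)$ for translation equivalent $\lambda$, $\lambda'$ (Cases (i),(ii),(iv)--(vii)), is precisely the translation stage, once one observes that the diagram moves in those cases are the realisations of the simple reflections fixing a wall in the reflection geometries of \cite{cdm2} and \cite{cddm}. Part (ii) supplies the wall-crossing stage: with $\lambda'$ separating $\lambda^-<\lambda^+$, the identities $d_{\lambda^+\mu^+}(q)=d_{\lambda'\mu'}(q)$ and $d_{\lambda^+\mu^-}(q)=q\,d_{\lambda'\mu'}(q)$ describe the effect of applying $\theta_s$ to $d_{\lambda'\bullet}(q)$, while (\ref{s1}) and (\ref{s2}), rewritten so as to isolate $d_{\lambda^-\mu^{\pm}}(q)$, display exactly the lower order terms to be subtracted. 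The coefficients occurring there are the Hecke $\mu$-coefficients, which in each of the configurations of Figures \ref{four} and \ref{free} take the value $0$ or $1$ according to which of the two local pictures (as in Figure \ref{taster} and its analogues) the weight $\mu^+$ realises --- exactly the finite case check already performed in the proof of Proposition \ref{recp}.

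The main obstacle is this last piece of bookkeeping: one must confirm, configuration by configuration, that the terms subtracted in the \cite{cdm3} recursion are precisely those forced by (\ref{s1})--(\ref{s2}), i.e.\ that the combinatorics of small caps and small curls reproduces the Bruhat-order wall-crossing in the two Coxeter systems. Granting this, both $d_{\lambda\mu}(q)$ and the relevant parabolic Kazhdan--Lusztig polynomial are solutions of the same recursion with the same initial data, hence they coincide; specialising $q=1$ and applying Theorems \ref{mainresult} and \ref{mainresult2} yields the Corollary, recovering in the regular case the formula established by Martin \cite{marbrauer} in the Brauer setting.
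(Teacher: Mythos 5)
Your overall strategy --- show that the $d_{\lambda\mu}(q)$ satisfy the recursion that uniquely determines the parabolic Kazhdan--Lusztig polynomials, then specialise at $q=1$ --- is also the paper's strategy, and your matching of Proposition \ref{recp} with the translation stage is correct: equations (\ref{s1}) and (\ref{s2}) say exactly that the row $(d_{\lambda^+\mu}(q))_\mu$ is obtained from $(d_{\lambda^-\mu}(q))_\mu$ by multiplying by the relevant Hecke generator $\theta_s$ in the standard basis. But you then leave the second stage of the algorithm --- the subtraction of lower order terms --- as an unverified piece of ``bookkeeping'', and the way you propose to close it is off target. Figures \ref{four} and \ref{free} (via the local case analysis as in Figure \ref{taster}) are what \emph{prove} (\ref{s1}) and (\ref{s2}), i.e.\ the translation stage itself; they say nothing about which multiples of lower Kazhdan--Lusztig basis elements must be removed from $\underline{N}_{\lambda^-}\theta_s$, and the coefficients $1$, $q$, $q^{-1}$ occurring in (\ref{s1})--(\ref{s2}) are not the Hecke $\mu$-coefficients.

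The missing idea, which is the entire content of the paper's short proof, is that no subtraction is ever needed. By definition $d_{\lambda\mu}(q)$ is either $0$ or the single monomial $q^{\deg(c_\lambda x_\mu)}$, and this degree is strictly positive whenever $\lambda\neq\mu$. Hence the element produced by the translation stage already lies in $N_{\lambda^+}+\sum_{\mu\neq\lambda^+}q\ZZ[q]\,N_\mu$, which (together with self-duality, automatic for a product $\underline{N}_{\lambda^-}\theta_s$) is precisely the characterisation of the parabolic Kazhdan--Lusztig basis element in the construction of \cite{soergel1} recalled in \cite{cdm3}; so every correction coefficient vanishes and the induction closes immediately. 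Without this positivity observation --- or, failing that, an actual configuration-by-configuration computation of the correction terms, which you explicitly defer --- you have not shown that the $d_{\lambda\mu}(q)$ and the parabolic Kazhdan--Lusztig polynomials solve ``the same recursion'', because the recursions have only been matched at the first of the two stages.
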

\begin{proof}
It follows from (\ref{s1}) and (\ref{s2}) that the recursive formula
corresponding to translating a parabolic Kazhdan-Lusztig polynomial
holds for the $d_{\lambda\mu}$. By definition, the $d_{\lambda\mu}$
are monomials in $q$ with strictly positive degree if $\lambda\neq
\mu$ and $d_{\lambda\mu}(q)\neq 0$. This implies that there is no
subtraction of lower order terms in the calculation of parabolic
Kazhdan-Lusztig polynomials, and hence the $d_{\lambda\mu}(q)$ are
indeed parabolic Kazhdan-Lusztig polynomials.
\end{proof}

\begin{rem}
There are a number of related constructions of (parabolic)
Kazhdan-Lusztig polynomials (see \cite[Section 3]{soergel1} for the
relationship between them). In \cite{lsgrass} and \cite{boe} closed
forms are given for certain Kazhdan-Lusztig polynomials arising from
types $(D,A)$ and $(A,A\times A)$ (among others); in Section
\ref{vcap} we will recover these from our diagrams by defining new
polynomials $p_{\lambda\mu}(q)$. The relation between the
$p_{\lambda\mu}(q)$ and the $d_{\lambda\mu}(q)$ will be given in
Corollary \ref{pord}.

\end{rem}

\section{Valued cap and curl diagrams}\label{vcap}

In this section we will return to the combinatorics of cap and curl
diagrams, and define a new family of polynomials associated to pairs
of (bi)partitions $\lambda$ and $\mu$. These are given by a diagrammatic
version of the combinatorial formulas for Kazhdan-Lusztig
polynomials given in \cite{lsgrass} and \cite{boe}; a discussion of
the relation between the two approaches can be found in Appendix \ref{app}.

Fix $\lambda\in\Lambda_{r,s}$ or $\Lambda_n$ and
$\mu\in\calB=\calB(\lambda)$. We set $I(\calB)$ to be the infinite set
of non-zero integers indexing the vertices of $x_{\lambda}$ labelled
by $\vee$ or $\wedge$, but \emph{excluding} the leftmost one. Set
$I(\lambda,\mu)$ to be the finite subset of $I(\calB)$ indexing
vertices that are labelled differently in $x_{\lambda}$ and in
$x_{\mu}$. For $i\in I(\calB)$ define
$$\begin{array}{lcl}
l_i(\lambda,\mu)&=&
\#\{j\in I(\lambda,\mu): j\geq i\ \text{and vertex $j$ of
  $x_{\lambda}$ is labelled by $\wedge$}\}\\
&& -\#\{j\in I(\lambda,\mu): j\geq i\ \text{and vertex $j$ of
  $x_{\mu}$ is labelled by $\wedge$}\}.\end{array}$$
Note that $\lambda\geq\mu$ if and only if $l_i(\lambda,\mu)\geq 0$ for
all $i\in I(\calB)$. We set
$$l(\lambda,\mu)=\sum_{i\in I(\calB)}l_i(\lambda,\mu).$$

Any cap or curl diagram cuts the upper half plane into various open
connected regions, which we will call \emph{chambers}. Recall that we
say that a cap or curl in $c$ is small if it does not contain
any cap or curl inside it. Given a pair of chambers separated by a cap
or curl, we say that they are \emph{adjacent} and refer to the one
lying below as the \emph{inside} chamber, and the other as the
\emph{outside} chamber. The vertices labelled with $\vee$ or $\wedge$
will be called the \emph{non-trivial} vertices.

In the curl diagram case we may have a chamber $A$ (possibly
unbounded) inside which there are a series of maximal chambers
(i.e. chambers adjacent to $A$) $A_1,\ldots,A_t$ from left to right
not separated by the end of a curl. If $A_1$ is formed either by a
curl or by a cap involving the leftmost non-trivial vertex then we say
that $A_1,\ldots, A_t$ forms a \emph{chain}.

A \emph{valued cap diagram} $c$ is a cap diagram whose chambers have
been assigned values from the integers such that 
\begin{enumerate}
\item all external (unbounded)
chambers have value $0$;
\item given two adjacent chambers, the value of the inside chamber is
  at least as large as the value of the outer chamber.
\end{enumerate}
A \emph{valued curl diagram} $c$ is a curl diagram whose chambers have
been assigned values from the integers such that (1) and (2) above
hold and also
\begin{enumerate}
\item[(3)] the value of the chamber defined by a cap or curl connected
  to or containing inside itself the leftmost non-trivial vertex must be even;
\item[(4)] if $A_1,\ldots A_t$ is a chain and the value of $A_i$ is
  less than or equal to that of $A_j$ for all $1\leq j<i$ then the
  value of $A_i$ must be even. 
\end{enumerate}

Given a valued cap/curl diagram $c$, we write $|c|$ for the sum of
the values of $c$.

We are now able to define a new polynomial $p_{\lambda\mu}(q)$
associated to our pair $\lambda$ and $\mu$ in $\calB$. If
$x_{\lambda}\not\geq x_{\mu}$ then set
$p_{\lambda\mu}(q)=0$. Otherwise, let $D(\lambda,\mu)$ be the set of
all valued cap/curl diagrams obtained by assigning values to the chambers
of $c_{\mu}$ in such a way that the value of every small cap or curl is at
most $l_i(\lambda,\mu)$, where $i$ indexes the right-most vertex of the
cap or curl. Now set
$$p_{\lambda\mu}(q)=q^{l(\lambda,\mu)}\sum_{c\in
  D(\lambda,\mu)}q^{-2|c|}$$
and write $p_{\lambda\mu}^{(m)}$ for the coefficient of $q^m$ in
$p_{\lambda\mu}(q)$. That $p_{\lambda\mu}(q)$ is indeed a
polynomial will follow from Proposition \ref{klrel}, and hence
$$p_{\lambda\mu}(q)=\sum_{m\geq 0}p_{\lambda\mu}^{(m)}q^m.$$

\begin{example}
In Figure \ref{pcalc} we have illustrated a pair of diagrams
$x_{\lambda}$ and $x_{\mu}$ together with the curl diagram $c_{\mu}$
and the value of $l_i(\lambda,\mu)$ for each vertex $i$ in our
diagram. Thus in this case
$$l(\lambda,\mu)=2+3+2+2+1+1=11.$$
The various allowable values for the chambers in the curl
diagram are indicated in the Figure, where only the chambers marked
$a$ and $b$ can be non-zero. We must have $a\in\{0,2\}$ and $b\in\{0,1,2\}$.

Now the valued cap diagram is in
$D(\lambda,\mu)$  if and only if 
$$(a,b)\in\{(0,0),(0,1),(0,2),(2,0),(2,2)\}.$$
For example, note that we cannot have $(a,b)=(2,1)$ as this
configuration would not satisfy condition (4). Thus we see that
$$p_{\lambda\mu}(q)=q^{11}(1+q^{-2}+2q^{-4}+q^{-8})=q^{11}+q^9+2q^7+q^3.$$
\end{example}

\begin{figure}[ht]
\includegraphics{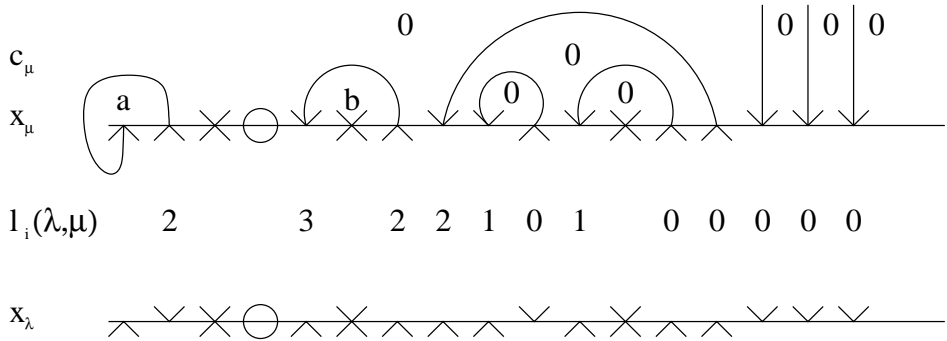}
\caption{An example of the calculation of $p_{\lambda\mu}(q)$.}
\label{pcalc}
\end{figure}

Pick a small cap or curl in $\lambda$. The possible configurations of
caps in $\lambda$ are given in Figure \ref{capcase}(i-iii) and of
curls in Figure \ref{curlcase}(iv-viii). Associated weights $\lambda'$ are
shown in each case, with two subcases appearing in Figure
\ref{curlcase}(viii), together with weights $\lambda^-$ in Figure
\ref{capcase}(iii) and Figure \ref{curlcase}(viii)(b). We will show how
the values of $p_{\lambda\mu}(q)$ can be calculated from the
polynomials $p_{\lambda'\mu'}$ and $p_{\lambda^-\tau}$ for suitable
choices of $\tau$, which will give a recursive formula for the
$p_{\lambda\mu}$.

Consider the configurations shown in Figure \ref{capcase}(iii) and
Figure \ref{curlcase}(viii)(b). In both of these cases we will denote
$\lambda$ by $\lambda^+$, and then the weights $\lambda^+$ and
$\lambda^-$ are separated by $\lambda'$ and $\lambda'$ is in the lower
closure of $\lambda^+$. We will say that an element is \emph{of the form
$\mu^+$} if it is in the same block as $\lambda^+$ and has the same
configuration of $\wedge$s and $\vee$s as $\lambda^+$ at the vertices on
the small cap or curl under consideration.

\begin{prop}\label{klrel}
(i) Let $\lambda$ and $\lambda'$ be one of the configurations in Figure
\ref{capcase}(i-ii) or Figure \ref{curlcase}(iv-vii), or 
as
in Figure \ref{curlcase}(viii)(a) where the vertices on the small curl
are labelled $\frac{1}{2}$ and $\frac{3}{2}$. Then
$$p_{\lambda\mu}(q)=p_{\lambda'\mu'}(q)$$ for all $\mu\in{\mathcal
  B}(\lambda)$. \\ 
(ii) Let $\lambda$ and $\lambda'$ be configured as
in Figure \ref{capcase}(iii) or as in Figure \ref{curlcase}(viii)(b)
where the vertices on the small curl are labelled $0$ and
$1$. Then setting $\lambda^+=\lambda$ we have
\begin{equation}\label{cross1}
p_{\lambda^+\mu^+}(q)=p_{\lambda'\mu'}(q)+qp_{\lambda^-\mu^+}(q)
\end{equation}
and
\begin{equation}\label{cross2}
p_{\lambda^+\mu}(q)=qp_{\lambda^-\mu}(q)
\end{equation}
for all $\mu$ not of the form $\mu^+$.
\end{prop}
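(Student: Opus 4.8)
The plan is to prove both parts by a direct combinatorial analysis of how the valued cap/curl diagrams $c_\mu$ and $c_{\mu'}$ (respectively $c_{\mu^+}$, $c_{\mu^-}$) relate, tracking the effect on the statistics $l_i(\lambda,\mu)$ and $l(\lambda,\mu)$ and on the admissible value assignments. For part (i), the key observation is that in each of the configurations of Figure \ref{capcase}(i-ii), Figure \ref{curlcase}(iv-vii), and Figure \ref{curlcase}(viii)(a) with labels $\tfrac12,\tfrac32$, the passage $\lambda\leftrightarrow\lambda'$ only swaps a non-trivial vertex with an adjacent $\circ$ or $\times$. Since the $\circ$s and $\times$s contribute nothing to $I(\mathcal B)$, $I(\lambda,\mu)$, the functions $l_i$, or to the combinatorics of chambers (they are pure place-markers in the construction of $c_\mu$), we get a value-preserving bijection between $D(\lambda,\mu)$ and $D(\lambda',\mu')$: the curl diagrams $c_\mu$ and $c_{\mu'}$ are literally identical once the $\circ$/$\times$ positions are ignored, the bounds $l_i(\lambda,\mu)=l_i(\lambda',\mu')$ on small cap/curl values coincide, and $l(\lambda,\mu)=l(\lambda',\mu')$. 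Hence $p_{\lambda\mu}(q)=p_{\lambda'\mu'}(q)$ termwise. One must be slightly careful in the $\tfrac12,\tfrac32$ case to confirm that the evenness conditions (3) and (4) are unaffected — but the leftmost non-trivial vertex and the chain structure are determined by the $\wedge/\vee$ pattern alone, which is unchanged, so this is immediate.

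For part (ii), the work is in the configuration of Figure \ref{capcase}(iii) (the curl case Figure \ref{curlcase}(viii)(b) being entirely parallel, with the extra evenness constraint playing the role forced by the $0$-labelled vertex). Here the two relevant vertices — call them $i$ and $i-1$ — carry the three possible local patterns: $\lambda^+$ has $(\wedge,\vee)$-type (the clockwise-orientable outside), $\lambda'$ has the $\circ/\times$-type so that these become a small cap, and $\lambda^-$ has the $(\vee,\wedge)$-type. For $\mu$ not of the form $\mu^+$, the two vertices of $\mu$ at positions $i,i-1$ must be both $\wedge$ or both $\vee$; the cap diagram $c_{\mu}$ agrees with $c_{\mu^-}$ outside a neighbourhood of that small cap, and the small cap present in $c_{\lambda^-}$ (respectively $c_{\lambda^+}$) contributes exactly one extra counterclockwise/clockwise... — more precisely one computes $l_i(\lambda^+,\mu)=l_i(\lambda^-,\mu)+1$ while $l_j$ is unchanged for $j\ne i$, so $l(\lambda^+,\mu)=l(\lambda^-,\mu)+1$, and the set of admissible diagrams $D(\lambda^+,\mu)$ maps bijectively onto $D(\lambda^-,\mu)$ by forgetting the (forced-trivial) value of that small chamber. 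This gives $p_{\lambda^+\mu}(q)=q\,p_{\lambda^-\mu}(q)$, which is (\ref{cross2}). For (\ref{cross1}) the $\mu=\mu^+$ case is the crucial one: now the small cap of $c_{\lambda^+}$ can be oriented either way, so $D(\lambda^+,\mu^+)$ splits into those diagrams where the chamber bounded by that small cap has value $0$ and those where it is positive; the value-$0$ part, after the obvious identification of $c_{\mu^+}$ with $c_{\mu'}$ away from this cap and using $l(\lambda^+,\mu^+)=l(\lambda',\mu')$, is exactly $p_{\lambda'\mu'}(q)$, while shifting the value down by the minimal admissible positive amount identifies the remaining part with $q\,p_{\lambda^-\mu^+}(q)$ — here one uses $l(\lambda^+,\mu^+)=l(\lambda^-,\mu^+)+1$ together with the $q^{-2|c|}$ weighting, so a value-$1$ shift of one chamber contributes the single power $q$. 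Summing gives (\ref{cross1}).

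The main obstacle I anticipate is the bookkeeping in part (ii) around the evenness conditions (3) and (4) in the curl case Figure \ref{curlcase}(viii)(b): there the small curl (or the cap/curl attached to the leftmost non-trivial vertex) must carry an \emph{even} value, so the "value-$0$ versus value-positive" dichotomy used above becomes "value-$0$ versus value-$\ge 2$", and one must check that this still cleanly separates $D(\lambda^+,\mu^+)$ into a copy of $D(\lambda',\mu')$ and a copy of $D(\lambda^-,\mu^+)$ with the correct single power of $q$ — i.e. that the bound $l_i(\lambda^+,\mu^+)$ on the small chamber's value and the chain condition (4) for neighbouring chambers behave compatibly under the shift. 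I would handle this by enumerating, exactly as in the proof of Proposition \ref{recp}, the two or three local subcases for the $\wedge/\vee$ pattern of $\mu^+$ at the relevant vertices (analogues of configurations (a), (b) in Figure \ref{taster}), verifying (\ref{cross1}) and (\ref{cross2}) in each; the remaining cases listed in Figures \ref{four} and \ref{free} being handled by the identical argument. Everything else is a routine but careful comparison of the two diagrams and their chamber structures.
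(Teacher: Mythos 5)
Part (i) of your proposal matches the paper's argument and is fine. The problem is in part (ii), specifically your decomposition of $D(\lambda^+,\mu^+)$ for (\ref{cross1}). You split according to whether the small chamber at vertices $i-1,i$ has value $0$ or positive value, identify the value-$0$ part with $D(\lambda',\mu')$, and send the positive part to $D(\lambda^-,\mu^+)$ by decreasing that value by $1$. Both identifications fail. For the first: if the small chamber has value $0$ then condition (2) forces every chamber containing it to have value $0$ as well, whereas in $D(\lambda',\mu')$ (where that cap is absent) the enclosing chambers are free to take positive values up to their own bounds; so the value-$0$ subset is in general strictly smaller than $D(\lambda',\mu')$. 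Moreover your exponent bookkeeping uses $l(\lambda^+,\mu^+)=l(\lambda',\mu')$, but in fact $l(\lambda^+,\mu^+)=l(\lambda',\mu')+2l_i(\lambda^+,\mu^+)$, since every $j\le i$ picks up the contribution of the two newly differing vertices. For the second: decreasing only the small chamber's value by $1$ can violate condition (2) when an enclosing chamber has the same value, so the map is not even well defined into $D(\lambda^-,\mu^+)$; and even where it is defined, the weight changes by $q^{+1}$ from $l(\lambda^+,\mu^+)=l(\lambda^-,\mu^+)+1$ and by $q^{+2}$ from $q^{-2|c|}$, giving $q^{-1}p_{\lambda^-\mu^+}(q)$ rather than $qp_{\lambda^-\mu^+}(q)$.

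The correct split (the one the paper uses) is by whether the small chamber attains its \emph{maximal} allowed value $l_i(\lambda^+,\mu^+)$. Since $l_j(\lambda^+,\mu^+)=l_j(\lambda^-,\mu^+)$ for $j\ne i$ and $l_i(\lambda^+,\mu^+)=l_i(\lambda^-,\mu^+)+1$, the sub-maximal diagrams are \emph{exactly} the elements of $D(\lambda^-,\mu^+)$ with no re-valuing at all, and the factor $q$ comes solely from $l(\lambda^+,\mu^+)=l(\lambda^-,\mu^+)+1$. The maximal diagrams correspond to $D(\lambda',\mu')$ by deleting the cap/curl (whose value is the fixed number $l_i(\lambda^+,\mu^+)$), and the identity $l(\lambda^+,\mu^+)=l(\lambda',\mu')+2l_i(\lambda^+,\mu^+)$ exactly cancels the deleted $q^{-2l_i(\lambda^+,\mu^+)}$. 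The real work, which your outline does not address, is verifying that attaching the cap/curl with value $l_i(\lambda^+,\mu^+)$ to an arbitrary element of $D(\lambda',\mu')$ again satisfies conditions (2)--(4) -- this requires comparing $l_i$ with the bounds governing neighbouring and enclosing chambers and with the parity constraints for chains, and is where the case analysis of the paper is concentrated. Your treatment of (\ref{cross2}) is closer to correct (though note that $\mu$ of the $\mu^-$ type, not just the ``both $\wedge$'' or ``both $\vee$'' types, must be handled, and the cases where $c_\mu$ has a cap or curl with rightmost vertex $i$ need the separate parity/nesting argument to conclude $D(\lambda^+,\mu)=D(\lambda^-,\mu)$).
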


\begin{proof} (Compare with \cite[(3.14) Proposition]{boe}.)
In the cases in Figure \ref{capcase}(i-ii) and Figure
\ref{curlcase}(iv-vii) the weights $\lambda$ and $\lambda'$ are
translation equivalent. By construction we have in all of these cases
that
$$p_{\lambda\mu}(q)=p_{\lambda'\mu'}(q)$$ for all $\mu\in{\mathcal
    B}(\lambda)$. 
The case in Figure \ref{curlcase}(viii)(a) occurs
when the vertices on the small curl are labelled $\frac{1}{2}$ and
$\frac{3}{2}$, and again the weights $\lambda$ and $\lambda'$ are
translation equivalent. The translation  equivalence is given by
changing the $\pm\frac{1}{2}$ entry in $x_{\mu}$ to $\mp\frac{1}{2}$
in $x_{\mu'}$. Therefore $l_i(\lambda,\mu)=l_i(\lambda',\mu')$ for all
$i\in I({\mathcal B})$ and all other caps and curls are
preserved. Thus in this case we also have that 
$$p_{\lambda\mu}(q)=p_{\lambda'\mu'}(q)$$ for all $\mu\in{\mathcal
    B}(\lambda)$. 

The two remaining cases are those shown in Figure \ref{capcase}(iii)
and Figure \ref{curlcase}(viii)(b). In both of these cases the weights
$\lambda^+$ and $\lambda^-$ are separated by $\lambda'$ and $\lambda'$
is in the lower closure of $\lambda^+$. We first consider
(\ref{cross1}). We claim there is a one-to-one correspondence between
$D(\lambda^+,\mu^+)$ and $D(\lambda',\mu')\sqcup
D(\lambda^-,\mu^+)$. Let $i$ be the rightmost vertex of the small cap
or curl under consideration in $x_{\lambda}$. It is easy to see that
$$l_i(\lambda^+,\mu^+)=l_i(\lambda^-,\mu^+)+1$$
and that if $i-1$ is the left-most non-trivial vertex then
$l_i(\lambda^+,\mu^+)$ is even. 

The valued cap/curl diagrams in $D(\lambda^+,\mu^+)$ split into two
subsets, those where the value of the small cap/curl under
consideration is less than $l_i(\lambda^+,\mu^+)$ and those where the
value is equal to $l_i(\lambda^+,\mu^+)$.  The first set are exactly
the valued cup/curl diagrams in $D(\lambda^-,\mu^+)$.  

We will show that the second set is obtained from the set of valued
cap/curl diagrams $D(\lambda',\mu')$ by adding to each element a
cap/curl joining vertices $i-1$ and $i$ with value
$l_i(\lambda^+,\mu^+)$. For $c\in D(\lambda',\mu')$ denote by $c^+$
the corresponding valued cap/curl diagram with this extra cap/curl. We
need to show that $c^+$ is indeed in $D(\lambda^+,\mu^+)$ to give the
desired bijection.

We check that inserting this extra cap/curl with the given value
satisfies the condition (1--4) in the definition of a valued cap/curl
diagram. (1) is obvious.

\begin{figure}[ht]
\includegraphics{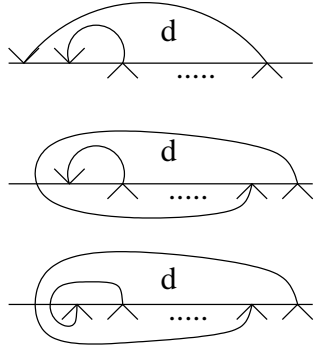}
\caption{The possible nested cases}
\label{nest1}
\end{figure}

For (2), suppose that our small cap/curl is nested inside a larger one
$d$. We may assume that they are adjacent. There are three possible
cases, illustrated in Figure \ref{nest1}. Suppose that there is a
small cap in the dotted region in Figure \ref{nest1}; if we pick the
leftmost such cap and $j$ denotes its right-hand vertex then it is easy
to see that
$$l_j(\lambda^+,\mu^+)\leq l_i(\lambda^+,\mu^+).$$
So the value of this small cap is at most $l_i(\lambda^+,\mu^+)$ and
hence the value of $d$ is at most $l_i(\lambda^+,\mu^+)$.

If the dotted region in Figure \ref{nest1} is empty then let $j$ be the
vertex at the right-hand end of the cap/curl defining $d$. If this is a
cap then we have 
$$l_j(\lambda^+,\mu^+)\leq l_i(\lambda^+,\mu^+)$$
and so the value of $d$ is at most $l_i(\lambda^+.\mu^+)$. If we have
a small cap or curl nested in a curl then
$$l_j(\lambda^+,\mu^+)\leq l_i(\lambda^+,\mu^+)+1.$$
But $d$ has to be even and $l_i(\lambda^+,\mu^+)$ is even, and so the
value of $d$ is at most $l_i(\lambda^+,\mu^+)$.

For (3), as noted above if $i-1$ is the leftmost non-trivial vertex
then $l_i(\lambda^+,\mu^+)$ is even.

Finally for (4), suppose we have a chain of chambers. If our small
cap/curl is the leftmost in the chain then denote the vertices of the
next chamber along in the chain as shown in Figure \ref{chain1}. By
the same argument as in (2) we see that $d$ has value at most
$l_i(\lambda^+,\mu^+)$, and as $k$ was the leftmost non-trivial vertex
we have that $d$ is even.

\begin{figure}[ht]
\includegraphics{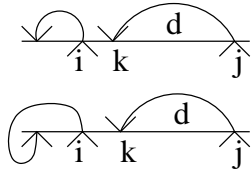}
\caption{The leftmost chain cases}
\label{chain1}
\end{figure}

If there is a chamber to each side of our small cap in the chain then
we are in the configuration shown in Figure \ref{chain2}. As before
the value of $e$ is at most $l_i(\lambda^+,\mu^+)$. If $e$ has value
at most that of $d$ and all other predecessors then removing the small
cap at $i$ we have a chain in $D(\lambda',\mu')$ and so $d$ is even as
required. 

\begin{figure}[ht]
\includegraphics{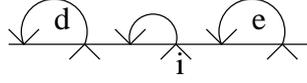}
\caption{The mid-chain cases}
\label{chain2}
\end{figure}

If our small cap is the rightmost in the chain then a similar argument
shows that the preceding chamber $d$ in the chain has value at most
$l_j(\lambda^+,\mu^+)\leq l_i(\lambda^+,\mu^+)$.  If
$l_i(\lambda^+,\mu^+)$ is no greater than all preceding values in the
chain then $l_i(\lambda^+,\mu^+)$ is at most the value of $d$, and
hence by the preceding inequality the value of $d$ equals
$l_i(\lambda^+,\mu^+)$. Removing our small cap gives a chain in
$D(\lambda',\mu')$ and hence $l_i(\lambda^+,\mu^+)$ must be even. Thus
conditions (1-4) are satisfied and hence $c^+\in D(\lambda^+,\mu^+)$
as required. 

It is also clear that
$$l(\lambda^+,\mu^+)=l(\lambda^-,\mu^+)+1$$
and
$$l(\lambda^+,\mu^+)=l(\lambda',\mu')+2l_i(\lambda^+,\mu^+).$$
Hence
\begin{eqnarray*}
p_{\lambda^+\mu^+}(q)&=&
q^{l(\lambda^+,\mu^+)}\sum_{c\in D(\lambda^+,\mu^+)}q^{-2|c|}\\
&=&q^{l(\lambda^+,\mu^+)}\sum_{c\in D(\lambda^-,\mu^+)}q^{-2|c|}
+q^{l(\lambda^+,\mu^+)}\sum_{c\in D(\lambda',\mu')}q^{-2|c^+|}\\
&=&q.q^{l(\lambda^-,\mu^+)}\sum_{c\in D(\lambda^-,\mu^+)}q^{-2|c|}
+q^{l(\lambda',\mu')+2l_i(\lambda^+,\mu^+)}
\sum_{c\in D(\lambda',\mu')}q^{-2|c|-2l_i(\lambda^+,\mu^+)}\\
&=& qp_{\lambda^-\mu^+}(q)+p_{\lambda'\mu'}(q).
\end{eqnarray*}

It remains to show that (\ref{cross2}) holds. If $\mu$ is not of the
form $\mu^+$ then it must have a different configuration of $\vee$s
and $\wedge$s on the pair of vertices defined by our small cap or
curl.  Thus the possible configurations are as indicated in Figure
\ref{nonplus}, where the top row (a-c) corresponds to the small cap
case in Figure \ref{capcase}(iii) and the bottom row (d-f) corresponds
to the small curl case in Figure \ref{curlcase}(viii)(b).

\begin{figure}[ht]
\includegraphics{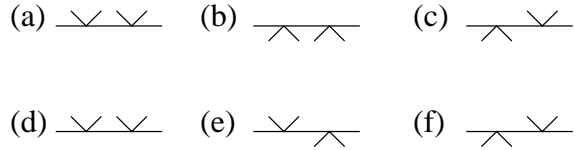}
\caption{The possible configurations of $\mu$ not of the form $\mu^+$}
\label{nonplus}
\end{figure}

In all six cases we have 
$$l(\lambda^+,\mu)=l(\lambda^-,\mu)+1.$$
Let $i$ be the rightmost of the vertices on the small cap/curl in
$\lambda$. Note that for all $j\neq i$ we have that
$$l_j(\lambda^+,\mu)=l_j(\lambda^-,\mu)\quad\text{and}\quad 
l_i(\lambda^+,\mu)=l_i(\lambda^-,\mu)+1.$$

Now for $\mu$ as in Figure \ref{nonplus}(a), (c), (d), or (f) there is
no cap/curl in $c_{\mu}$ with rightmost vertex $i$, and so in these
cases we have that
$$D(\lambda^+,\mu)=D(\lambda^-,\mu).$$ 
For $\mu$ as in Figure \ref{nonplus}(b) or (e) there might be a
cap/curl with rightmost vertex $i$.

If $i$ is the second non-trivial vertex in $\mu$ (or $\lambda^+$,
$\lambda^-$), then $l_i(\lambda^-,\mu)$ is even and so
$l_i(\lambda^+,\mu)$ is odd. Also the cap/curl in $\mu$ involved the
first non-trivial vertex in $\mu$ and so its value must be even. Hence
we again have that 
$$D(\lambda^+,\mu)=D(\lambda^-,\mu).$$ 

\begin{figure}[ht]
\includegraphics{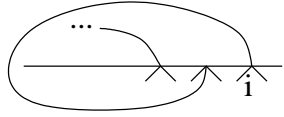}
\caption{The final configuration of $\mu$}
\label{newcurl}
\end{figure}

If $i$ is not the second non-trivial vertex then we must have a
configuration of the form in Figure \ref{newcurl}. Note that
$$l_{i-2}(\lambda^+,\mu)\leq
l_i(\lambda^+,\mu)-1=l_i(\lambda^-,\mu)$$
and as the values are non-increasing in nested chambers we again have
that
$$D(\lambda^+,\mu)=D(\lambda^-,\mu).$$

Thus in all cases we have
\begin{eqnarray*}
p_{\lambda^+\mu}(q)&=&q^{l(\lambda^+,\mu)}\sum_{c\in
  D(\lambda^+,\mu)}q^{-2|c|}\\
&=&qq^{l(\lambda^-,\mu)}\sum_{c\in
  D(\lambda^-,\mu)}q^{-2|c|}\\
&=& qp_{\lambda^-\mu}(q).
\end{eqnarray*}
\end{proof}

\section{Projective resolutions of standard modules}\label{proj}

We now have the combinatorial framework needed to describe projective
resolutions of standard modules for the walled Brauer algebra. This is
inspired by the corresponding result for Khovanov's diagram  algebra
in \cite[Theorem 5.3]{bs2}

\begin{thm}\label{projres}
For each $\lambda \in\Lambda_{r,s}$ there is an exact sequence
$$\cdots\too P_{(a)}^m(\lambda)\too\cdots\too P_{(a)}^1(\lambda)\too
P_{(a)}^0(\lambda)
\too \Delta_{(a)}(\lambda)\too 0$$
where 
$$P_{(a)}^i(\lambda)
=\bigoplus_{\mu\in\Lambda_{(a)}}p_{\lambda\mu}^{(i)}P_{(a)}(\mu).$$
\end{thm}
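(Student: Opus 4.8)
The plan is to induct on the partial order $\leq$ on $x_\lambda$ (equivalently on the number of clockwise caps/curls available, or simply on $|\lambda|$ together with the order), using the recursive formula for the $p_{\lambda\mu}(q)$ established in Proposition \ref{klrel} together with the homological behaviour of translation functors from Section \ref{tor}. The base case is when $x_\lambda$ is minimal in its block: then $c_\lambda$ has no caps or curls, $p_{\lambda\mu}(q)=\delta_{\lambda\mu}$, $\Delta_{(a)}(\lambda)=P_{(a)}(\lambda)$ is projective, and the complex is concentrated in degree $0$. (As in Section \ref{repthry}, the cases $\lambda^L=\emptyset$ or $\lambda^R=\emptyset$, where $x_\lambda=\rho$, are also immediate.) So assume $x_\lambda$ is not minimal and pick a small cap (or, in the Brauer case, a small curl) in $c_\lambda$.

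\textbf{The translation-equivalent cases.} First suppose $\lambda$ and $\lambda'$ are translation equivalent, i.e. we are in the configurations of Figure \ref{capcase}(i--ii) or Figure \ref{curlcase}(iv--vii) or \ref{curlcase}(viii)(a). By induction there is a projective resolution $P^\bullet_{(a-1)}(\lambda')\too\Delta_{(a-1)}(\lambda')$ with $P^i_{(a-1)}(\lambda')=\bigoplus_{\mu'}p^{(i)}_{\lambda'\mu'}P_{(a-1)}(\mu')$. Apply the exact functor $\ind^{\lambda}_{(a-1)}$ (which is exact since the tower induction functor is exact and $\pr$ is exact on a block decomposition). By Theorem \ref{equiv}(i) we have $\ind^{\lambda}_{(a-1)}P_{(a-1)}(\mu')\cong P_{(a)}(\mu)$, and by Theorem \ref{local}(i) together with the description of $\supp$ in Section \ref{repthry} one checks $\ind^{\lambda}_{(a-1)}\Delta_{(a-1)}(\lambda')\cong\Delta_{(a)}(\lambda)$ (this is essentially the computation already implicit in the proof of Theorem \ref{mainresult}). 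Since $p_{\lambda\mu}(q)=p_{\lambda'\mu'}(q)$ by Proposition \ref{klrel}(i), the resulting complex is exactly the asserted one for $\lambda$. One must also note that the bijection $\mu\leftrightarrow\mu'$ is compatible with the partial order so that the indexing sets match up.

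\textbf{The lower-closure cases.} Now suppose we are in Figure \ref{capcase}(iii) or Figure \ref{curlcase}(viii)(b), so $\lambda=\lambda^+$, the weight $\lambda'$ lies in the lower closure of $\lambda^+$, and $\lambda^+,\lambda^-$ are separated by $\lambda'$. By Proposition \ref{lc} we have $\ind^{\lambda^+}_{(a-1)}P_{(a-1)}(\lambda')\cong P_{(a)}(\lambda^+)$, and by Theorem \ref{local}(ii), $\ind^{\lambda^+}_{(a-1)}\Delta_{(a-1)}(\lambda')$ is a nonsplit extension of $\Delta_{(a)}(\lambda^-)$ by $\Delta_{(a)}(\lambda^+)$. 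Applying the exact functor $\ind^{\lambda^+}_{(a-1)}$ to the inductively-given resolution of $\Delta_{(a-1)}(\lambda')$ produces an exact complex of projectives resolving this extension; using $\ind^{\lambda^+}_{(a-1)}P_{(a-1)}(\mu')\cong P_{(a)}(\mu^+)$ (for $\mu'$ translation-equivalent partners) and $p_{\lambda'\mu'}(q)$ from the induction, the terms are governed by $p_{\lambda'\mu'}(q)$. One then splices this with a projective resolution of $\Delta_{(a)}(\lambda^-)$ — available by induction, since $x_{\lambda^-}<x_{\lambda^+}$ — via the short exact sequence
\begin{equation}\label{sesproj}
0\too\Delta_{(a)}(\lambda^-)\too\ind^{\lambda^+}_{(a-1)}\Delta_{(a-1)}(\lambda')\too\Delta_{(a)}(\lambda^+)\too 0,
\end{equation}
using the horseshoe lemma (or the long exact sequence and a mapping-cone argument) to obtain a projective resolution of $\Delta_{(a)}(\lambda^+)$. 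The $i$-th term of this cone is $\bigl(\bigoplus_\mu p^{(i)}_{\lambda'\mu'}P_{(a)}(\mu^+)\bigr)\oplus\bigl(\bigoplus_\mu p^{(i-1)}_{\lambda^-\mu}P_{(a)}(\mu)\bigr)$, and the point is that the multiplicities add up correctly: this is exactly the content of the recursions \eqref{cross1} and \eqref{cross2} in Proposition \ref{klrel}(ii), which say $p_{\lambda^+\mu^+}(q)=p_{\lambda'\mu'}(q)+qp_{\lambda^-\mu^+}(q)$ and $p_{\lambda^+\mu}(q)=qp_{\lambda^-\mu}(q)$ for $\mu$ not of the form $\mu^+$. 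Matching coefficients of $q^i$ on both sides gives precisely the claimed $P^i_{(a)}(\lambda^+)$.

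\textbf{Main obstacle.} The genuinely delicate point is verifying that the complex produced by the mapping cone in the lower-closure case is \emph{minimal} — i.e. that the differentials are in the radical, so that no cancellation occurs and the term $P^i_{(a)}(\lambda^+)$ really is $\bigoplus_\mu p^{(i)}_{\lambda^+\mu}P_{(a)}(\mu)$ rather than something with spurious summands. Equivalently, one needs the connecting maps in the horseshoe construction to have no split part; this should follow from the nonsplitness of \eqref{sesproj} (Theorem \ref{local}(ii)) together with an inductive minimality hypothesis built into the statement, but it requires care. A clean way to handle this is to strengthen the inductive statement to assert that the resolution is minimal (the differentials lie in $\rad$), prove that $\ind^{\lambda^+}_{(a-1)}$ preserves minimality (it sends projective covers to projective covers and radicals to radicals since it is an equivalence on the relevant truncated categories by Theorem \ref{walls}), and then check minimality of the cone directly. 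The translation-equivalent cases are then routine; the bookkeeping of which $P_{(a)}(\mu)$ occurs (and the compatibility of the $\mu\leftrightarrow\mu'$, $\mu\leftrightarrow\mu^\pm$ correspondences with the combinatorics defining $p_{\lambda\mu}$) is the remaining, purely diagrammatic, part.
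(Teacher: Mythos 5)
Your strategy is essentially the paper's: induct (the paper uses $\deg(\lambda)$), handle the translation-equivalent configurations of Figures \ref{capcase}(i--ii), \ref{curlcase}(iv--vii), (viii)(a) by transporting the resolution of $\Delta(\lambda')$ through a translation functor, and handle the lower-closure configurations \ref{capcase}(iii), \ref{curlcase}(viii)(b) by a mapping-cone argument on the short exact sequence relating $\Delta(\lambda^-)$, the translated $\Delta(\lambda')$, and $\Delta(\lambda^+)$, with the coefficient bookkeeping supplied by \eqref{cross1} and \eqref{cross2}. The one substantive difference is the direction of travel: you apply $\ind^{\lambda}_{(a-1)}$ to a resolution living at level $(a-1)$, whereas the paper applies $\res^{\lambda}_{(a+1)}$ to a resolution at level $(a+1)$ (using that $\lambda'\in\Lambda_{(a-1)}\subset\Lambda_{(a+1)}$, so that Theorem \ref{equiv}(iii) and the second half of Proposition \ref{lc} apply to every $\mu'$ occurring). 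This matters for rigor: restriction in a tower of recollement is exact for free, while your parenthetical claim that ``the tower induction functor is exact'' is not among the results quoted in the paper and does require an argument (it holds here because $B_{(a+1)}$ is free as a one-sided $B_{(a)}$-module, but you should either prove that or simply switch to restriction as the paper does). You also need $\ind^{\lambda}_{(a-1)}\Delta_{(a-1)}(\lambda')\cong\Delta_{(a)}(\lambda)$, which is not literally Theorem \ref{local}(i); it follows from Proposition \ref{indres} plus projection to the block, but say so.

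Your ``main obstacle'' is a red herring for this statement. The theorem asserts only the existence of an exact sequence whose $i$-th term is $\bigoplus_\mu p^{(i)}_{\lambda\mu}P_{(a)}(\mu)$; the mapping cone of a lift of $\Delta_{(a)}(\lambda^-)\to\ind^{\lambda^+}_{(a-1)}\Delta_{(a-1)}(\lambda')$ has, by construction, exactly the terms $\bigl(\bigoplus_{\mu'}p^{(i)}_{\lambda'\mu'}P_{(a)}(\mu^+)\bigr)\oplus\bigl(\bigoplus_{\mu}p^{(i-1)}_{\lambda^-\mu}P_{(a)}(\mu)\bigr)$, which Proposition \ref{klrel}(ii) identifies with $P^i_{(a)}(\lambda^+)$ irrespective of whether the differentials lie in the radical. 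Minimality (equivalently, no cancellation) only becomes relevant for the subsequent corollary identifying $p_{\lambda\mu}^{(i)}$ with $\dim\Ext^i(\Delta(\lambda),L(\mu))$, and the paper's proof of the theorem makes no minimality claim. One small caution: the horseshoe lemma resolves the middle term of a short exact sequence from resolutions of the two ends, which is not what you want here; stick with the mapping-cone (or total-complex-of-a-double-complex) formulation, which is what the paper uses and what your displayed term count actually corresponds to.
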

\begin{proof}
Let $\lambda\in\Lambda_{(a)}$. If $\lambda$ is minimal then
$$\Delta_{(a)}(\lambda)=P_{(a)}(\lambda)=P^0_{(a)}(\lambda)$$ and
$P^m_{(a)}(\lambda)=0$ for all $m\geq 0$ and for all $(a)$ with
$\lambda\in\Lambda_{(a)}$. Thus we may assume that $\lambda$ is not
minimal.

As in Section \ref{repthry} we choose a cap or a curl in $\lambda$ not
containing any smaller caps or curls. We have eight cases to consider
as shown in Figures \ref{capcase} and \ref{curlcase}.  We proceed by
induction on $\deg(\lambda)$. Note that in all cases we have
$\deg(\lambda')<\deg(\lambda)$ and in cases (iii) and (viii)(b) we
also have $\deg(\lambda^-)<\deg(\lambda)$. So we can assume that the
result holds for $\lambda'$ and $\lambda^-$.

In cases (i), (ii), (iv-vii) and (viii)(a) we have by induction a
projective resolution of $\Delta_{(a+1)}(\lambda')$ of the form
$$\cdots\too P_{(a+1)}^m(\lambda')\too\cdots\too
P_{(a+1)}^1(\lambda')\too P_{(a+1)}^0(\lambda') \too
\Delta_{(a+1)}(\lambda')\too 0.$$ In these cases we saw that $\lambda$
and $\lambda'$ are translation equivalent. Applying the exact functor
$\res_{(a+1)}^{\lambda}$ to this resolution and using Theorem
\ref{equiv}(iii) and Proposition \ref{klrel}(i) and (ii) we get a
projective resolution 
$$\cdots\too P_{(a)}^m(\lambda)\too\cdots\too P_{(a)}^1(\lambda)\too
P_{(a)}^0(\lambda)
\too \Delta_{(a)}(\lambda)\too 0$$
as required.

For the cases (iii) and (viii)(b) we set $\lambda^+=\lambda$. By
induction we have projective resolutions of $\Delta_{(a+1)}(\lambda')$ and
$\Delta_{(a)}(\lambda^-)$ of the form
\begin{equation}\label{bot}
\cdots\too P_{(a+1)}^m(\lambda')\too\cdots\too P_{(a+1)}^1(\lambda')\too
P_{(a+1)}^0(\lambda')
\too \Delta_{(a+1)}(\lambda')\too 0
\end{equation}
and
\begin{equation}\label{top}
\cdots\too P_{(a)}^m(\lambda^-)\too\cdots\too P_{(a)}^1(\lambda^-)\too
P_{(a)}^0(\lambda^-)
\too \Delta_{(a)}(\lambda^-)\too 0.
\end{equation}
We also have an exact sequence
$$0\too \Delta_{(a)}(\lambda^-)\stackrel{f}{\too}
\res_{(a+1)}^{\lambda}\Delta_{(a+1)}(\lambda')\too
\Delta_{(a)}(\lambda^+)\too 0.$$

Applying $\res_{(a+1)}^{\lambda}$ to (\ref{bot}) and extending $f$ to
a chain map using (\ref{top}) we get a commutative diagram with exact
rows
$$\begin{array}{ccccccc} \too& P_{(a)}^m(\lambda^-)&\too\cdots\too&
  P_{(a)}^0(\lambda^-)& \too& \Delta_{(a)}(\lambda^-)&\too
  0\\ &\downarrow&&\downarrow&&\downarrow f&\\ 
\too&
  \res_{(a+1)}^{\lambda}P_{(a+1)}^m(\lambda')&\too\cdots\too&
  \res_{(a+1)}^{\lambda}P_{(a+1)}^0(\lambda')& \too&
  \res_{(a+1)}^{\lambda}\Delta_{(a+1)}(\lambda')&\too 0\end{array}$$
which we extend into a double complex by adding $0$s in all
remaining rows.

Taking the total complex of this double complex gives an exact
sequence
\begin{multline}\label{total}
\cdots\too 
P_{(a)}^m(\lambda^-)\oplus\res_{(a+1)}^{\lambda}P_{(a+1)}^{m+1}(\lambda')
\too\cdots\\
\cdots\too
\Delta_{(a)}(\lambda^-)\oplus\res_{(a+1)}^{\lambda}P_{(a+1)}^{0}(\lambda')
\too \res_{(a+1)}^{\lambda}\Delta_{(a+1)}(\lambda')\too 0.
\end{multline}
By Proposition \ref{indres} there is an obvious injective chain map
from
$$\cdots \too 0\too \cdots \too 0\too \Delta_{(a)}(\lambda^-)
\too \Delta_{(a)}(\lambda^-)\too 0$$
to the complex in (\ref{total}), and the quotient gives an exact
sequence
\begin{equation}\label{quot}
\cdots\too 
P_{(a)}^m(\lambda^-)\oplus\res_{(a+1)}^{\lambda}P_{(a+1)}^{m+1}(\lambda')
\too
\cdots\too
\res_{(a+1)}^{\lambda}P_{(a+1)}^{0}(\lambda')
\too \Delta_{(a)}(\lambda^+)\too 0.\end{equation}

By Propositions \ref{lc} and \ref{klrel}(iii) we have
$$\res_{(a+1)}^{\lambda}P_{(a+1)}^0(\lambda')=
\res_{(a+1)}^{\lambda}P_{(a+1)}(\lambda')=
P_{(a)}(\lambda^+)=P_{(a)}^0(\lambda^+).$$
For $m>0$ we have by Proposition \ref{lc} and Proposition \ref{klrel} that
\begin{align*}
P_{(a)}^m(\lambda^-)\oplus\res_{(a+1)}^{\lambda}P_{(a+1)}^{m+1}(\lambda')
&=\bigoplus_{\mu\in\calB(\lambda)}
p_{\lambda^-\mu}^{(m)}P_{(a)}(\mu)\oplus
\bigoplus_{\mu'\in\calB(\lambda')}
p_{\lambda'\mu'}^{(m+1)}\res_{(a+1)}^{\lambda}P_{(a+1)}(\mu')\\
&=\bigoplus_{\mu\in\calB(\lambda)}
p_{\lambda^-\mu}^{(m)}P_{(a)}(\mu)\oplus
\bigoplus_{\mu'\in\calB(\lambda')}
p_{\lambda'\mu'}^{(m+1)}P_{(a)}(\mu^+)\\
&=\bigoplus_{\mu^+\in\calB(\lambda)}\left(p_{\lambda^-\mu^+}^{(m)}+
p_{\lambda'\mu'}^{(m+1)}\right)P_{(a)}(\mu^+)\oplus
\bigoplus_{\mu\in\calB(\lambda),\mu\neq\mu^+}p_{\lambda^-\mu}^{(m)}P_{(a)}(\mu)\\
&=\bigoplus_{\mu^+\in\calB(\lambda)}
p_{\lambda^+\mu^+}^{(m+1)}P_{(a)}(\mu^+)\oplus
\bigoplus_{\mu\in\calB(\lambda), \mu\neq\mu^+}p_{\lambda^+\mu}^{(m+1)}P_{(a)}(\mu)\\
&=\bigoplus_{\mu\in\calB(\lambda)}
p_{\lambda^+\mu}^{(m+1)}P_{(a)}(\mu)
=P_{(a)}^{(m+1)}(\lambda^+)=P_{(a)}^{(m+1)}(\lambda).
\end{align*}
Substituting into (\ref{quot}) we obtain the desired projective
resolution of $\Delta_{(a)}(\lambda)$.
\end{proof}
 
For fixed $(a)$ we can consider the matrices formed by the
$p_{\lambda\mu}(q)$ and the $d_{\lambda\mu}(q)$ with rows and columns
indexed respectively by $\lambda$ and $\mu$ in $\Lambda_{(a)}$. The
next pair of Corollaries follow from the last Proposition in exactly
the same way as in \cite[Corollaries 5.4 and 5.5]{bs2}.

\begin{cor}\label{pord}
The matrix $(p_{\lambda\mu}(-q))$ is the inverse of the matrix 
$(d_{\lambda\mu}(q))$.
\end{cor}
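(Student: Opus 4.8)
The plan is to decategorify the projective resolution of Theorem~\ref{projres}, following \cite[Corollary 5.4]{bs2}. Fix a block $\calB=\calB(\lambda)$ and work with weights in a fixed $\Lambda_{(a)}$. Since there are only finitely many $\mu$ with $\mu\leq\lambda$ and each $p_{\lambda\mu}(q)$ is a genuine polynomial, the resolution
$$\cdots\too P^m_{(a)}(\lambda)\too\cdots\too P^0_{(a)}(\lambda)\too\Delta_{(a)}(\lambda)\too 0$$
has finite length. Both matrices $(d_{\lambda\mu}(q))$ and $(p_{\lambda\mu}(q))$ are unitriangular for the partial order $\leq$ (by construction a nonzero entry forces $\mu\leq\lambda$, and the diagonal entries are $1$), so $(d_{\lambda\mu}(q))$ is invertible over $\ZZ[q]$ and it suffices to check that $(p_{\lambda\mu}(-q))$ is a one-sided inverse, i.e.\ that $\sum_{\mu}p_{\lambda\mu}(-q)\,d_{\mu\nu}(q)=\delta_{\lambda\nu}$, where all sums are finite.

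The first step is an Euler characteristic in the Grothendieck group. Using $P^i_{(a)}(\lambda)=\bigoplus_\mu p^{(i)}_{\lambda\mu}P_{(a)}(\mu)$ together with $[P_{(a)}(\mu)]=\sum_\nu(P_{(a)}(\mu):\Delta_{(a)}(\nu))[\Delta_{(a)}(\nu)]=\sum_\nu d_{\mu\nu}(1)[\Delta_{(a)}(\nu)]$ (the last equality by Theorems~\ref{mainresult} and \ref{mainresult2}), the exact sequence above gives in $K_0$
$$[\Delta_{(a)}(\lambda)]=\sum_i(-1)^i[P^i_{(a)}(\lambda)]=\sum_\nu\Big(\sum_\mu p_{\lambda\mu}(-1)\,d_{\mu\nu}(1)\Big)[\Delta_{(a)}(\nu)].$$
Since the $[\Delta_{(a)}(\nu)]$ form a basis of $K_0$ this yields $\sum_\mu p_{\lambda\mu}(-1)\,d_{\mu\nu}(1)=\delta_{\lambda\nu}$. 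To promote this numerical identity to the full polynomial identity one keeps track of the homological degree as the grading variable: the $i$th term of the resolution involves $P_{(a)}(\mu)$ with multiplicity $p^{(i)}_{\lambda\mu}$, so the variable $q$ in $p_{\lambda\mu}(q)$ records exactly the homological position, while the $q$ in $d_{\mu\nu}(q)$ records the matching position in a $\Delta$-filtration of $P_{(a)}(\mu)$; the homological sign $(-1)^i$ then implements the substitution $q\mapsto-q$ on the $p$-side, giving $\sum_\mu p_{\lambda\mu}(-q)\,d_{\mu\nu}(q)=\delta_{\lambda\nu}$.

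The delicate point — and the main obstacle — is to make this graded Euler characteristic rigorous, since (unlike the positively graded Khovanov algebra of \cite{bs2}, where the resolution is linear) the algebras $B_{(a)}$ are not equipped with a grading here. In the present setting the cleanest course is instead to verify the identity $\sum_\mu p_{\lambda\mu}(-q)\,d_{\mu\nu}(q)=\delta_{\lambda\nu}$ combinatorially, by induction on $\deg(\lambda)$ exactly as in the proof of Theorem~\ref{projres}. If $\lambda$ is minimal the statement is trivial. Otherwise pick a small cap or curl; in the translation-equivalent cases (Figures~\ref{capcase}(i--ii) and \ref{curlcase}(iv--vii),(viii)(a)) the identity for $\lambda$ follows from that for $\lambda'$ via the order-preserving bijection $\mu\leftrightarrow\mu'$, using $p_{\lambda\mu}(q)=p_{\lambda'\mu'}(q)$ and $d_{\lambda\mu}(q)=d_{\lambda'\mu'}(q)$ (Propositions~\ref{recp}(i), \ref{klrel}(i)). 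In the remaining cases (Figures~\ref{capcase}(iii) and \ref{curlcase}(viii)(b), with $\lambda=\lambda^+$) one substitutes \eqref{cross1}, \eqref{cross2}, the relations $d_{\lambda^+\mu^+}(q)=d_{\lambda'\mu'}(q)$ and $d_{\lambda^+\mu^-}(q)=q\,d_{\lambda'\mu'}(q)$, and \eqref{s1}, \eqref{s2} into $\sum_\mu p_{\lambda^+\mu}(-q)\,d_{\mu\nu}(q)$, splits the sum over $\mu$ according to whether $\mu$ is of the form $\mu^+$, invokes the inductive hypothesis for $\lambda'$ and $\lambda^-$ (both of strictly smaller degree), and checks that the remaining cross terms cancel thanks to \eqref{s1}--\eqref{s2}. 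This gives the Corollary; the companion statement (Corollary~5.5 of \cite{bs2}) adapts along the same lines.
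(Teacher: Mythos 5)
Your proposal is correct, but it does not follow the paper's (very terse) route. The paper simply asserts that the Corollary follows from Theorem \ref{projres} ``in exactly the same way as in [BS2, Corollaries 5.4 and 5.5]'', i.e.\ by decategorifying the projective resolution; you rightly observe that the literal Euler-characteristic argument in the ungraded module category only yields the specialisation $\sum_\mu p_{\lambda\mu}(-1)d_{\mu\nu}(1)=\delta_{\lambda\nu}$, since unlike the Khovanov algebra of [BS2] the (walled) Brauer algebra carries no grading with respect to which the resolution is linear. Your replacement --- a direct induction on $\deg(\lambda)$ through the same case analysis as Theorem \ref{projres}, feeding the recursions of Propositions \ref{recp} and \ref{klrel} into $\sum_\mu p_{\lambda\mu}(-q)d_{\mu\nu}(q)$ --- is sound: in the separated case the sum splits as $\sum_{\tau'}p_{\lambda'\tau'}(-q)d_{\tau^+\nu}(q)-q\sum_\mu p_{\lambda^-\mu}(-q)d_{\mu\nu}(q)$, the second term is $-q\delta_{\lambda^-\nu}$ by induction, and the first is handled by $d_{\tau^+\sigma^+}=d_{\tau'\sigma'}$, $d_{\tau^+\sigma^-}=qd_{\tau'\sigma'}$ and $d_{\tau^+\nu}=0$ otherwise, so everything cancels as claimed. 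Two small points of care: the relations $d_{\mu\nu}=d_{\mu'\nu'}$ and $p_{\mu\nu}=p_{\mu'\nu'}$ are needed for \emph{all} pairs in the block, not just those involving $\lambda$ (this is immediate since the $\circ$/$\times$ positions are block invariants and the cap/curl removal is local, but Propositions \ref{recp}(i) and \ref{klrel}(i) are only stated for $\lambda$); and the version of Proposition \ref{recp}(ii) you invoke must likewise be applied with an arbitrary $\tau^+$ in place of $\lambda^+$. What your approach buys is a self-contained combinatorial proof that makes no appeal to minimality or gradedness of the resolution; what it loses is the conceptual link to Theorem \ref{projres}, which is what the companion Ext corollary still requires.
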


\begin{cor}
We have
$$p_{\lambda\mu}(q)=\sum_{i\geq 0}q^i\dim\Ext^i(\Delta(\lambda),L(\mu)).$$
\end{cor}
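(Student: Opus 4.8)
The plan is to deduce the identity from the projective resolution constructed in Theorem~\ref{projres}, once we know that resolution is \emph{minimal}. Applying $\Hom_{(a)}(-,L_{(a)}(\mu))$ to
$$\cdots\too P_{(a)}^i(\lambda)\too\cdots\too P_{(a)}^0(\lambda)\too\Delta_{(a)}(\lambda)\too 0$$
produces a cochain complex whose $i$-th cohomology is $\Ext^i(\Delta_{(a)}(\lambda),L_{(a)}(\mu))$. Since $P_{(a)}(\nu)$ has simple head $L_{(a)}(\nu)$ we have $\dim\Hom_{(a)}(P_{(a)}(\nu),L_{(a)}(\mu))=\delta_{\nu\mu}$, so the $i$-th term of this complex has dimension $p_{\lambda\mu}^{(i)}$. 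This gives $\dim\Ext^i(\Delta(\lambda),L(\mu))\le p_{\lambda\mu}^{(i)}$ unconditionally (and, comparing Euler characteristics, $\sum_i(-1)^i\dim\Ext^i(\Delta(\lambda),L(\mu))=p_{\lambda\mu}(-1)$, consistent with Corollary~\ref{pord}). If in addition every differential of the complex vanishes --- that is, if the resolution is minimal --- then equality holds in each degree and we obtain $p_{\lambda\mu}(q)=\sum_i q^i\dim\Hom_{(a)}(P_{(a)}^i(\lambda),L_{(a)}(\mu))=\sum_{i\ge0}q^i\dim\Ext^i(\Delta(\lambda),L(\mu))$.

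So the task reduces to showing that the resolution of Theorem~\ref{projres} is minimal, which I would do by induction on $\deg(\lambda)$, following the construction used in its proof. If $\lambda$ is minimal the resolution is just the projective cover $P_{(a)}(\lambda)\to\Delta_{(a)}(\lambda)$. In the translation-equivalent cases (i), (ii), (iv)--(vii) and (viii)(a), the resolution of $\Delta_{(a)}(\lambda)$ is obtained by applying the exact functor $\res_{(a+1)}^{\lambda}$ to an (inductively minimal) resolution of $\Delta_{(a+1)}(\lambda')$; by Theorem~\ref{equiv}(i) this functor carries indecomposable projectives to indecomposable projectives and simples to simples, hence preserves heads and radicals, and therefore takes minimal resolutions to minimal resolutions.

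The delicate cases are (iii) and (viii)(b), where the resolution of $\Delta_{(a)}(\lambda^+)$ arises as (a truncation of) the total complex of a double complex built from a chain map $f_\bullet$ lifting the inclusion $f\colon\Delta_{(a)}(\lambda^-)\hookrightarrow\res_{(a+1)}^{\lambda}\Delta_{(a+1)}(\lambda')$ to the (inductively minimal) resolutions of its source and target. Here one must take $f_\bullet$ to be a \emph{minimal} lift. This is possible at level $0$ because $f$ induces the zero map on heads: $\res_{(a+1)}^{\lambda}\Delta_{(a+1)}(\lambda')$ has simple head $L_{(a)}(\lambda^+)$ by Theorem~\ref{local}(ii), while $\Delta_{(a)}(\lambda^-)$ has head $L_{(a)}(\lambda^-)\ne L_{(a)}(\lambda^+)$; one then propagates radical-valuedness up the resolution using minimality of both complexes being connected. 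With such a choice every differential of the total complex lands in the radical, and the term-by-term computation already carried out in the proof of Theorem~\ref{projres} identifies its $i$-th term with $P_{(a)}^i(\lambda^+)=P_{(a)}^i(\lambda)$, so it is the minimal projective resolution. I expect this last point --- that a minimal lifting chain map exists in the separation cases and that the resulting cone is again minimal --- to be the main obstacle, exactly as in the parallel argument of Brundan and Stroppel \cite{bs2}. Granting it, minimality holds in all cases, and by the first paragraph $p_{\lambda\mu}(q)=\sum_{i\ge0}q^i\dim\Ext^i(\Delta(\lambda),L(\mu))$ as claimed.
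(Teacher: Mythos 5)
Your opening paragraph makes the correct reduction: applying $\Hom(-,L_{(a)}(\mu))$ to the resolution of Theorem \ref{projres} yields a complex whose $i$-th term has dimension $p^{(i)}_{\lambda\mu}$, and the corollary is equivalent to the vanishing of all its differentials. But your proof of that vanishing is incomplete. You reduce it to the existence of a radical-valued lift $f_\bullet$ in cases (iii) and (viii)(b) and to the minimality of the resulting total complex, and you then explicitly ``grant'' this point rather than prove it. That is a genuine gap: the mapping cone of a chain map between minimal resolutions need not be minimal, and the existence of a radical-valued lift is not automatic in the degrees where $P^m_{(a)}(\lambda^-)$ and $\res^{\lambda}_{(a+1)}P^{m+1}_{(a+1)}(\lambda')$ share indecomposable summands --- which they do, precisely because $p^{(m)}_{\lambda^-\mu^+}$ and $p^{(m+1)}_{\lambda'\mu'}$ can both be nonzero (their sum is $p^{(m+1)}_{\lambda^+\mu^+}$). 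So the inductive argument as sketched does not close, and any honest completion of it would need extra input.

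The missing input is a one-line parity observation, and it is the route taken in \cite[Corollary 5.5]{bs2}, to which the paper defers. By definition $p_{\lambda\mu}(q)=q^{l(\lambda,\mu)}\sum_{c\in D(\lambda,\mu)}q^{-2|c|}$, so every exponent occurring in $p_{\lambda\mu}(q)$ is congruent to $l(\lambda,\mu)$ modulo $2$; that is, $p^{(i)}_{\lambda\mu}=0$ unless $i\equiv l(\lambda,\mu)\pmod 2$. Hence for each fixed $\mu$ the complex $\Hom(P^\bullet_{(a)}(\lambda),L_{(a)}(\mu))$, whose $i$-th term has dimension $p^{(i)}_{\lambda\mu}$, is concentrated in degrees of a single parity: consecutive terms cannot both be nonzero, so every differential vanishes for free and $\dim\Ext^i(\Delta(\lambda),L(\mu))=p^{(i)}_{\lambda\mu}$ with no discussion of minimal lifts at all. (Applied to all $\mu$ simultaneously, this also shows \emph{a posteriori} that the resolution of Theorem \ref{projres} is minimal, which is the fact you were trying to establish directly.)
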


\begin{rem}
We have seen that the walled Brauer algebras have the same
combinatoric for decomposition numbers and for projective resolutions
of standard modules as the generalised Khovanov diagram algebras
studied by Brundan and Stroppel \cite{bs1,bs2,bs3,bs4}. They have
shown that these Khovanov algebras are Morita equivalent (in a
limiting sense) to blocks of the general linear supergroup, and that
their quasihereditary covers are Morita equivalent to certain
parabolic category ${\mathcal O}$s. It would be very interesting (if
true) to determine an analogous relationship between these algebras
and the walled Brauer algebra, and to find analogous correspondences
for the Brauer algebra.
\end{rem}

\appendix
\section{Kazhdan-Lusztig polynomials}\label{app}

In this section we shall review the constructions of Kazhdan-Lusztig
polynomials corresponding to $A_{r}\times A_s$ inside $A_{r+s+1}$ and
$A_{n-1}$ inside $D_n$ given respectively by Lascoux and
Sch\"utzenberger \cite{lsgrass} and by Boe \cite{boe}, and how these
can be identified (up to a power of $q$) with the polynomials
associated to valued cap diagrams and valued curl diagrams. In the
former case this was already observed in \cite{bs2}.

We begin by outlining the construction of Boe \cite{boe}. Fixing $W$
of type $D_n$ and a fixed subCoxeter system of type $A_{n-1}$ defines a
dominant set of elements in $W$. These can be identified with words
of the form
$$w=w_n\ldots w_1$$ where each $w_i\in\{\alpha,\beta\}$, such that the
number of $\alpha$s is even. Because of this parity condition the
final element $w_1$ is redundant and is omitted.

Given a partition $\lambda$ we will identify the weight $x_{\lambda}$
with a word $w$ of the above form in the following manner. Fix $m>>0$
so that $m$ is the rightmost vertex in $x_{\lambda}$ lying on a cap or
curl in $c_{\lambda}$, and let $n$ be the number of vertices
labelled $\vee$ or $\wedge$ between $0$ and $m$ inclusive, and we
associate $\lambda$ to the word $w$ obtained by setting $w_i=\alpha$
(respectively $\beta$) if the $(n-i)$th such vertex from the left is
$\vee$ (respectively $\wedge$). We will refer to these vertices as the
\emph{non-trivial} vertices in $x_{\lambda}$.

Note that the identification letters in $w$ read from left to right
correspond to vertices in $x_{\lambda}$ read from right to left.

Lascoux-Sch\"utzenberger introduced the cyclic monoid $Z$ in the
letters $\alpha$ and $\beta$ \cite[Section 4]{lsgrass}. Rather than
repeating their definition, we note that if $w=w'zw''$ with
$z\in Z$ then $z$ corresponds to a line segment in $x_{\lambda}$ where
the non-trivial vertices form a sequence of (possibly nested) caps. If
$w=w'\alpha z\beta w''$ then Boe calls $\alpha$ and $\beta$ a
\emph{linked $\alpha\beta$ pair}; this corresponds to a cap in our
terminology. If
$$w=w'\alpha z_{2r}\alpha z_{2r-1}\alpha\ldots \alpha z_1\alpha z_0$$
with $z_i\in Z$ then Boe calls the rightmost $\alpha$ \emph{terminal}
and each pair of $\alpha$s separated by some $z_{2i}$ a \emph{linked
  $\alpha\alpha$ pair}. Under our correspondence linked $\alpha\alpha$
pairs correspond to curls. As Boe omits $w_1$ but $x_{\lambda}$
retains the corresponding point, a terminal $\alpha$ corresponds to
either a cap or a curl involving the leftmost non-trivial vertex.

Boe next defines a rooted directed tree associated to the word $w$. It
is routine to verify that  this corresponds to the tree with
vertices labelled by the chambers for $x_{\lambda}$, where an edge
connects chamber $A$ to chamber $B$ if chamber $A$ is adjacent to and
surrounds chamber $B$, and the unbounded chambers (separated by
infinite rays) are regarded as a single unbounded chamber via the
point at infinity. 

Thus the root of the tree corresponds to the unique
unbounded chamber, while the terminal nodes correspond to the small
chambers. Certain edges in the tree are marked with a plus sign; these
correspond to edges which cross either a curl or a cap involving the
left-most non-trivial vertex.

Certain pairs of edges in the tree are related by a dotted arrow. We
will describe the diagram version; the equivalence of the two is a
straightforward exercise. Suppose we have a chamber $A$ (possibly
unbounded) inside which there are a series of maximal chambers
$A_1,\ldots ,A_t$ from left to right (possibly containing other
chambers inside them) not separated by the end of a curl. If
the leftmost chamber $A_1$ is formed either by a curl or by a
cap involving the leftmost non-trivial vertex, then there is a dotted
arrow from the edge defined by $A_i$ in $A$ to the edge defined by
$A_{i+1}$ in $A$ for $1\leq i\leq t-1$.

In fact the dotted arrows are redundant in the diagram case: the leftmost chamber in a
curl must always be formed either by a curl or by a cap involving the
left-most non-trivial vertex, and the same is true in any unbounded
chamber with no ray to its left. Chambers formed by caps or with a ray
to their left cannot contain curls or the left-most non-trivial
vertex. Thus we can omit the dotted arrows in our diagrams without any
ambiguity.

Instead of labelling edges with plus signs, we will label chambers by
moving any labels to the vertices at the bottom of their respective
edges.

\begin{example}
An example of the correspondence between curl diagrams and
labelled graphs is given in Figure \ref{graphdiag}. Here we have
included the dotted arrows to emphasise where they occur. Note that the
graph must be reflected in the vertical axis under the correspondence
with the construction for Boe in terms of words in $\alpha$ and $\beta$.
\end{example}

\begin{figure}[ht]
\includegraphics{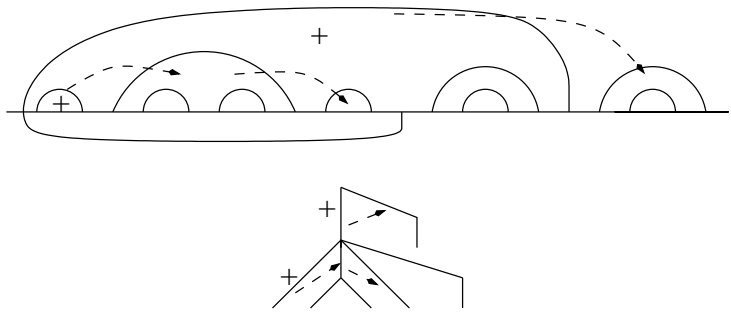}
\caption{The diagram graph correspondence}
\label{graphdiag}
\end{figure}

\begin{rem}
Our construction appears to depend on the choice of $m$ defined by the
rightmost vertex on a cap or curl. However, Boe's construction
(in our diagrammatic form) is not affected by the addition of
arbitrarily many rays to the right. Thus we can carry out all
calculations involving our diagrams in the unbounded setting.
\end{rem}

Boe next associates to pairs of words $(w,y)$ a labelling of the tree
for $w$. Under our identifications this corresponds to a valued curl
diagram. The polynomial $Q_{y,w}(q)$ defined by Boe by summing over
possible labellings corresponds almost exactly to our
$p_{\lambda\mu}(q)$. More precisely, if we denote by $w(\lambda)$ and
$w(\mu)$ the words in $\alpha$ and $\beta$ corresponding to $\lambda$
and $\mu$ (as described at the beginning of this section), then we
have that
$$p_{\lambda\mu}(q)=q^{l(\lambda,\mu)}Q_{w(\lambda),w(\mu)}(q^{-2}).$$

We have considered the relation between Boe's rooted tree construction
and curl diagrams. There is an entirely analogous relation between
the rooted tree construction of Lascoux-Sch\"utzenberger and cap
diagrams. In that case there are no linked $\alpha\alpha$ pairs or
terminal $\alpha$s marked with a plus sign, and thus no chambers
contain chains. The remainder of the construction goes through unchanged.

\newcommand{\etalchar}[1]{$^{#1}$}
\providecommand{\bysame}{\leavevmode\hbox to3em{\hrulefill}\thinspace}
\providecommand{\MR}{\relax\ifhmode\unskip\space\fi MR }
% \MRhref is called by the amsart/book/proc definition of \MR.
\providecommand{\MRhref}[2]{%
  \href{http://www.ams.org/mathscinet-getitem?mr=#1}{#2}
}
\providecommand{\href}[2]{#2}

\end{document}